\title{Extremal values of semi-regular continuants
and codings of interval exchange transformations}
\author{Alessandro De Luca\textsuperscript{1}, Marcia Edson\textsuperscript{2},
Luca Q. Zamboni\textsuperscript{3}}
\date{}
\newtheorem{mainthm}{Theorem}
\newtheorem{thm}{Theorem}[section]
\newtheorem{prop}[thm]{Proposition}
\newtheorem{lemma}[thm]{Lemma}
\newtheorem{cor}[thm]{Corollary}
\theoremstyle{definition}
\newtheorem{maindef}[mainthm]{Definition}
\newtheorem{defin}[thm]{Definition}
\newtheorem{rem}[thm]{Remark}
\newtheorem{example}[thm]{Example}
\newcommand{\F}{L}
\newcommand{\A}{\mathbb A}
\newcommand{\eps}{\varepsilon}
\newcommand{\pv}{\mathbf v}
\newcommand{\nats}{\mathbb N}
\newcommand{\ints}{\mathbb Z}
\newcommand{\reals}{\mathbb R}
\newcommand*{\dt}[1]{%
  \accentset{\mbox{\large\bfseries .}}{#1}}
\newcommand{\Gs}{\dt{\mathcal G}}
\newcommand{\G}{\mathcal G}
\newcommand{\Ks}{ \dt {K}}
\DeclareMathOperator{\perm}{perm}
\crefname{enumi}{}{items}
\begin{document}
\maketitle
\date
\begin{center}
\textsuperscript{1}DIETI, Universit\`a di Napoli Federico II,
via Claudio 21, 80125 Napoli, Italy\\
\texttt{alessandro.deluca@unina.it}\\
\textsuperscript{2}
\texttt{marciar.edson@gmail.com}\\
\textsuperscript{3}  Institut  Camille  Jordan, CNRS  UMR  5208,
Universit\'{e} de Lyon, Universit\'{e} Lyon  1,\\
43 boulevard du 11 novembre 1918, F69622 Villeurbanne Cedex, France\\
\texttt{zamboni@math.univ-lyon1.fr}
\end{center}

\begin{abstract}
Given a set $\A$ consisting of positive integers $a_1<a_2<\cdots <a_k$  and a $k$-term partition $P: n_1+n_2 + \cdots + n_k=n,$  find the extremal denominators of the regular and semi-regular continued fraction $[0;x_1,x_2,\ldots,x_n]$ with partial quotients $x_i\in \A$ and where each $a_i$ occurs precisely $n_i$ times in the sequence $x_1,x_2,\ldots,x_n.$ In 1983, G. Ramharter gave an explicit description of the extremal arrangements of the regular continued fraction and the minimizing arrangement for the semi-regular continued fraction and showed that in each case the arrangement is unique up to reversal and independent of the actual values of the positive integers $a_i.$ 
However, an explicit determination of a maximizing arrangement for the semi-regular continuant turned out to be substantially more difficult. Ramharter conjectured that
as in the other three cases, the maximizing arrangement is unique (up to reversal) and depends only on the partition $P$ and not on the actual values of the $a_i.$ He further verified the conjecture in the special case of a binary alphabet. 
In this paper, we confirm Ramharter's conjecture for sets $\A$ with $|\A|=3$ and give an algorithmic procedure for constructing the unique maximizing arrangement. We also show that Ramharter's conjecture fails  for sets with $|\A|\geq 4$ in that the maximizing arrangement is in general neither unique nor independent of the values of the digits in $\A.$ The central idea is that  the extremal arrangements satisfy a strong combinatorial condition. This combinatorial condition may also be stated more or less verbatim in the context of infinite sequences on an ordered set.  We show that in the context of bi-infinite binary words,  this condition coincides with the Markoff property,  discovered by A.A. Markoff in 1879  in his study of minima of binary quadratic forms. We  further show that this same combinatorial condition is the fundamental  property which describes the orbit structure of the natural codings of points under a symmetric $k$-interval exchange transformation.\\ 

\textbf{Keywords:} Continued fractions, extremal values of continuants, Markoff property, Sturmian words and interval exchange transformations.\\

\textbf{MSC[2010]: } Primary 11J70; 37B10, 68R15.
\end{abstract}

\section{Introduction}

Given a finite sequence of positive integers $y=y_1,y_2,\ldots , y_n$, describe an arrangement or permutation $x=x_1,x_2,\ldots , x_n$  of the sequence $y$ which maximizes (resp.~minimizes) the regular continuant $K(x)=K_n(x_1,x_2,\ldots,x_n)$.  The continuant $K_n(x_1,x_2,\ldots,x_n)$ is defined recursively by $K_0()=1$, $K_1(x_1)=x_1$ and
 \begin{equation}\label{K1} K_n(x_1,x_2,\ldots,x_n)=x_nK_{n-1}(x_1,x_2,\ldots ,x_{n-1}) + K_{n-2}(x_1,x_2,\ldots ,x_{n-2})\end{equation} and is equal to the denominator of the finite regular  continued fraction $[0; x_1,x_2,\ldots ,x_n]$. 
This problem appears to be first attributed to C.A. Nicol (see \cite{MS}). There is no reason a priori that such an extremal arrangement should be unique. In fact, since 
$K_n(x_1,x_2,\ldots,x_n)=K_n(x_n,\ldots,x_2,x_1)$, the reversal (or mirror image) of any extremal arrangement is again extremal. But more generally, the function $K(\cdot)$ is far from being injective and it happens that many different permutations of the sequence $y$ have the same $K$ value \cite{RamZ}. 
There are many open questions concerning the distribution of the continuants $K_n(x_1,x_2,\ldots,x_n)$ ($n\in \nats)$ where the $x_i$ are restricted to a bounded subset of positive integers, including the famous Zaremba conjecture \cite{Zar}. The distribution of the continuants with the $x_i$ belonging to a bounded subset $\A$ is also extremely relevant in estimating the Hausdorff dimension of the set $E_{\A}\subset \reals $ consisting of all finite and infinite (regular) continued fractions whose partial quotients all belong to $\A$ (see for instance \cite{Cu1,Cu2,Good}).

T.S. Motzkin and E.G. Straus~\cite{MS} provided a first partial answer to Nicol's question
in the special case in which $y_1,y_2,\ldots , y_n$, are pairwise distinct. In \cite{Cu2}, T.W. Cusick found the maximizing arrangement for an arbitrary sequence $y_1,y_2,\ldots , y_n$ consisting of $1$s and $2$s. But the general problem was settled by G. Ramharter \cite{Ram83}.  He gave an explicit description of both extremal arrangements and showed that they are unique (up to reversal) and independent of the actual values of  the digits (see Theorem 1 in \cite{Ram83}).  For example, if $y=y_1,y_2, \ldots ,y_n$ is given in the form $a_1^{n_1}\cdots a_k^{n_k}$ with $1\leq a_1<a_2<\cdots <a_k$ and $n_1+n_2+\cdots +n_k=n$, then the maximizing arrangement for $K(\cdot)$ is unique up to reversal and  given by:
\[a_k L_{k-1}a_{k-2}L_{k-3}\cdots a_1^{n_1}\cdots  a_{k-3}L_{k-2}a_{k-1}L_k\]
where $L_i=a_i^{n_i-1}$. The fact that the extremal arrangements are unique is quite surprising as is remarkable that they do not involve the ring structure of the integers but rather only the relative order of the digits involved. Ramharter's theorem was later reproved by C.~Baxa in \cite{Baxa} and used to prove a criterion for the transcendence of continued fractions whose partial quotients are contained in a finite set (see also \cite{Dav}).

Motivated by a question in diophantine approximation \cite{Ram82}, Ramharter also considered the analogous problem in the context of the semi-regular continuant 
 $\Ks_n(x_1,x_2,\ldots,x_n)$
defined recursively by $\Ks_0()=1$, $\Ks_1(x_1)=x_1$ and 
\begin{equation}\label{K2} \Ks_n(x_1,x_2,\ldots,x_n)=x_n\Ks_{n-1}(x_1,x_2,\ldots ,x_{n-1}) - \Ks_{n-2}(x_1,x_2,\ldots ,x_{n-2}).\end{equation} 
For semi-regular continuants, the digit $1$ needs to be excluded and in this case, $\Ks(x)$ is the denominator of the terminating semi-regular 
continued fraction
\[ [x]^{ \bullet}=\cfrac{1}{x_1 -\cfrac{1}{x_2 -\cfrac{1}{
\ddots-\cfrac{1}{x_n}}}} \]
Letting $X$ be the tridiagonal matrix \[ X=\left( \begin{matrix}
x_1 & 1 & 0 &\cdots &0 \\
1 & x_2 & 1 &\ddots&\vdots \\
 0& 1 & \ddots &\ddots &0 \\
 \vdots &\ddots&\ddots &x_{n-1}& 1\\
 0&\cdots&0&1&x_n
  \end{matrix} \right)\] 
  we have that $K_n(x_1,x_2,\ldots , x_n)=\perm(X)$ while $\Ks_n(x_1,x_2,\ldots, x_n)=\det(X)$ where $\perm(X)$ (resp.~$\det(X)$) denotes the permanent (resp.~determinant) of the matrix $X$.
  
As in the case of the extremal arrangements for the regular continuant, Ramharter showed that the minimizing arrangement for $\Ks(\cdot)$ is unique (up to reversal) and independent of the choice of digits $a_1,\ldots ,a_k$. However, the determination of the maximizing arrangement for $\Ks(\cdot)$ turned out to be more difficult. In fact,  Ramharter points out that in contrast with the other three cases, there is an infinity of essentially different patterns and the maximizing arrangements for $\Ks(\cdot)$ must be described in terms of an algorithmic procedure as ``their combinatorial structure is exceptionally complicated.'' Ramharter conjectured that, as in the case of the other three extremal arrangements, for general sequences $a_1^{n_1}\cdots a_k^{n_k}$  the maximizing arrangement for the semi-regular continuant $\Ks(\cdot)$ is always unique (up to reversal) and independent of the values of the digits $2\leq a_1<a_2<\cdots <a_k$. The conjecture has been verified by Ramharter in two special cases : i) for sequence $y=y_1,y_2,\ldots ,y_n$  on a binary alphabet, i.e., $y_1,\ldots ,y_n \in \{a_1, a_2\}$ for some choice of positive integers $2\leq a_1<a_2$ (see theorems~1 and 2 in \cite{Ram83});  ii)  for sequences $y_1,y_2,\ldots,y_n$ with pairwise different entries (see Theorem~1 in \cite{Ram05}).


The central idea,  as was first observed by Ramharter in \cite{Ram83}, is that the extremal arrangements for both the regular and semi-regular continuants  satisfy a very special combinatorial condition. In the case of the maximising arrangement $x=x_1,x_2, \ldots,x_n$ (with each $x_i\geq 2)$ for the semi-regular continuant, this condition, denoted $\Ks_{\max},$ is as follows :
 \begin{description}
\item[$\Ks_{\max}$:] For each factorization $x=\overline u vw$ with $v\neq \overline v$ and $u\neq w$ one has that $v< \overline v$ if and only if $w<u$. 
\end{description}

Here $\overline u$ denotes the reversal of the sequence $u$, (i.e., if $u=u_1,u_2,\ldots, u_j$ then $\overline u= u_j, \ldots ,u_2,u_1),$ and  $<$ denotes the lexicographic order induced by the natural order on $\reals$ where however one takes the opposite convention of the true dictionary order with respect to proper prefixes, i.e., we declare $u<v$ whenever $v$ is a proper prefix of $u.$ We remark that condition $\Ks_{\max}$ makes sense for any finite sequence $x$ over any totally ordered set $\A,$ i.e., the entries of $x$ need not be positive whole numbers. 
As Ramharter points out, the central question is to understand whether for each sequence $y$ over an ordered alphabet $\A$ 
there exists a \emph{unique} permutation $x$ of $y$ verifying condition $\Ks_{\max}$,  which, if true,  would imply a unique global maximum for the semi-regular continuant. He further conjectured that this should be the case in general \cite{Ram05}. 

The following theorem confirms Ramharter's conjecture for sequences $y$ over any ternary ordered alphabet $\A :$ 

\begin{restatable}{mainthm}{Ksmax}
\label{gen}
Let $\A$ be any totally ordered ternary alphabet and $y=y_1,y_2,\ldots,y_n$ be any sequence with each $y_i\in \A$. Then there is precisely one permutation $x$ of $y$ (up to reversal) which verifies condition $\Ks_{\max}$. 
\end{restatable}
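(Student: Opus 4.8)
The plan is to prove existence and uniqueness separately, since these require quite different tools. For uniqueness, the natural strategy is to show that any two permutations $x, x'$ of $y$ both satisfying $\Ks_{\max}$ must be reversals of one another. I would argue by contradiction, supposing $x \neq x'$ and $x \neq \overline{x'}$, and seeking the longest common context in which they agree before diverging. Concretely, condition $\Ks_{\max}$ is a local constraint relating a central factor $v$ to its surrounding reversed prefix $\overline u$ and suffix $w$; the key observation is that it should allow one to reconstruct $x$ incrementally. The plan is to show that $\Ks_{\max}$ forces enough rigidity that once the multiset of letters is fixed, every choice at each ``decision point'' is determined up to the global reversal symmetry.

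For existence, the plan is to proceed by an explicit algorithmic construction on the ternary alphabet $\A = \{a_1 < a_2 < a_3\}$, building the permutation that satisfies $\Ks_{\max}$ and then verifying the condition directly. Since the ternary case is where the new difficulty first appears (the binary case being already settled by Ramharter), I would first analyze how the middle letter $a_2$ interacts with the two extremes. The approach is to induct on the number of occurrences of one of the letters, say building up from the already-understood binary patterns by inserting copies of $a_2$ (or of $a_1$) one at a time into the maximizing binary arrangement, and showing at each step that there is a forced position dictated by $\Ks_{\max}$. The combinatorial heart is understanding the lexicographic comparisons $v < \overline v$ versus $w < u$ when three distinct letter values are present, since the ternary case introduces genuinely new ``interleaving'' patterns absent in the binary case.

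The hard part will be the existence direction, specifically controlling the lexicographic comparisons across all factorizations $x = \overline u v w$ simultaneously. The difficulty is that $\Ks_{\max}$ is a condition quantified over every such factorization, so a local greedy construction must be shown to respect all of these global comparisons at once; a change made to satisfy one factorization could violate another. I expect the main obstacle to be proving that the algorithmic insertion procedure is \emph{consistent}, i.e., that the forced positions never conflict, which likely requires a careful case analysis organized around the relative position of the middle value $a_2$ and a monovariant or potential-function argument showing that the partially built word can always be extended. I would isolate the core lexicographic lemmas — statements of the form ``if $v < \overline v$ then any one-letter extension behaves predictably'' — and prove these first, then feed them into both the uniqueness argument and the inductive existence construction so that the two halves share the same combinatorial machinery.
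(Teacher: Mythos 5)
Your proposal is a strategy outline rather than a proof, and the core mechanism it relies on has a genuine gap. The paper (\Cref{t:ternary}) proves existence and uniqueness \emph{simultaneously} by induction on $n_a+n_b+n_c$, using three explicit operations that act bijectively on singular words: $\lambda_a$ (when $n_a\geq n_b+n_c+1$), $\rho_c$ (when $n_c\geq n_a+n_b-1$), and the sequential map $\xi$, which inserts one $b$ into \emph{every} run of $b$'s and into every occurrence of $aa$ or $cc$ at once, changing $n_b$ by $|\delta|=|n_c-n_a+1|$ in a single step. The crucial content is the pair of facts that (i) each operation preserves singularity in both directions (\Cref{t:dAl,t:dCl,t:dBl}), and (ii) structural lemmas (\Cref{t:a1st,t:asep,t:csep,t:ab+bc,t:bruns}) force \emph{every} singular word with the given Parikh vector to lie in the image of the appropriate operation, so existence propagates up the induction and uniqueness propagates down. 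Your plan to insert copies of $a_2$ \emph{one at a time} into the binary maximizing arrangement breaks precisely at this point: the unique singular words with $b$-counts $n_b$ and $n_b\pm1$ (and the same $n_a,n_c$) are in general not related by a single-letter insertion---they can arise from entirely different reduction branches of the algorithm---so the intermediate words in your insertion chain have no reason to be singular, and there is no invariant to carry through the induction. Moreover, the base case is not always binary: when $n_a=n_c+1$ the terminal object is the genuinely ternary word $ab^n(ca)^m$ of \Cref{t:delta0}.

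Your uniqueness half (``$\Ks_{\max}$ forces enough rigidity that every decision point is determined'') is a restatement of the goal, not an argument; in the paper, uniqueness comes for free from the injectivity of the three maps together with the fact that the structural lemmas leave no singular word outside their images. What your sketch does correctly identify is that the condition is quantified over all factorizations and that consistency of any local construction is the hard part; but the resolution is not a greedy insertion with a potential function---it is the choice of operations compatible with the lexicographic order and with reversal (\Cref{t:monoa,t:monoc,t:monob}), so that reversible factorizations of the longer word correspond exactly to reversible factorizations of the shorter one.
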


We also give an algorithmic procedure for constructing the permutation $x$ verifying condition $\Ks_{\max}.$ 
It follows that for each sequence $y=a_1^{n_1}a_2^{n_2}a_3^{n_3}$  of positive integers with $2\leq a_1<a_2<a_3, $ there exists a unique (up to reversal) permutation $x$ of $y$ which maximizes the semi-regular continuant $\Ks(\cdot)$. Moreover the maximizing arrangement is independent of the actual values of the digits $a_1,a_2$ and $a_3$ and depends only on the vector $(n_1,n_2,n_3)$.
Thus for sequences $y$ involving at most three distinct positive integers, condition $\Ks_{\max}$ gives a full characterization of the maximizing arrangement for the semi-regular continuant $\Ks(\cdot)$.

However,  for sequences $y$ over ordered alphabets $\A$ of cardinality $4$ or more, it may happen that $y$ admits more than one permutation verifying condition $\Ks_{\max}$. This in itself does not disprove the existence of a unique global maximum for the semi-regular continuant $\Ks(\cdot)$ which is independent of the actual choice of positive integers assigned to the elements of $\A$, however, it provides a basis for constructing such examples. In fact, we show the existence of a sequence $y$ over an ordered alphabet $\A=\{a<b<c<d\}$ having two permutations $x$ and $x'$ (with $x'\neq \overline x)$ each verifying $\Ks_{\max}$ and  depending on the values assigned to each of $a,b,c$, and $d$ in the sequence $y$, 
the maximum of $\Ks(\cdot)$ is assumed at $x$ and not at $x'$, or inversely the maximum occurs at $x'$ and not $x$ or the maximum occurs simultaneously at both $x$ and $x'$ (see Example~\ref{counter}).  It follows from this that for sequences $y$ involving $4$ or more distinct positive integers, $\Ks_{\max}$ is not a strong enough condition to guarantee a maximizing arrangement. 

A primary objective of this paper is to study condition $\Ks_{\max}$ in its full generality. 
Given a totally ordered set $\A$, let $\A^+$ denote the free semigroup generated by $\A$ consisting of all finite words $x=x_1x_2\cdots x_n$ with each $x_i\in \A$. We let $\A^*=\A^+ \cup \{\varepsilon\}$ be the free monoid generated by $\A$ in which we adjoin the empty word $\varepsilon$ regarded as the unique word of length $0$. We also let $\A^\nats$ (resp.~$\A^\ints)$ denote the set of all one sided (resp.~two-sided) infinite words $x=x_1x_2x_3\cdots$ (resp.~$x=\cdots x_{-2}x_{-1}x_0x_1x_2\cdots)$ with each $x_i \in \A$. 

We introduce the following definition which amounts to a reformulation of  $\Ks_{\max}$ to a more general setting which includes both finite and infinite words:

\begin{maindef}\label{sing1}
Let $\A$ be any totally ordered alphabet, and $x\in \A^+\cup \A^\nats \cup \A^\ints. $ We say that $x$ is \emph{singular} if for all
factorizations $x=\overline u vw$
$(v\in \A^+, u,w \in \A^*\cup \A^\nats)$
with $v\neq \overline v$ and $u\neq w$ we have $v<\overline v$ if and only if $w<u$.
\end{maindef}

The order $<$ is taken to be the  lexicographic order on $\A^*\cup \A^\nats$  induced by the order on $\A$ (where as in $\Ks_{\max},$ we take the opposite convention of the true dictionary order when it comes to proper prefixes).   

In \cite{Ram05, Ram06}, Ramharter showed that each finite singular word $x$ over a binary alphabet $\A=\{a,b\}$ is a finite Sturmian word, and that the palindromic binary maximizing arrangements for $\Ks(\cdot)$ are in one-to-one correspondence with the extremal cases of the Fine and Wilf theorem \cite{FW} with two co-prime periods (see Theorem~3 in \cite{Ram05}). 
 We give a more precise reformulation of this result in terms of more standard notions in the theory of finite Sturmian words and already existing algorithms: 

\begin{restatable}{mainthm}{finitebinary}
\label{t:Chr}
A finite word $x$ over a binary ordered alphabet $\{a<b\}$ is singular if and only if $x$ or $\overline x$ is of the form $b^n,$ $ab^n$ ($n\geq 0)$ or $aya$ where $ayb$ is a power of a Christoffel word $C_{p,q}$ with co-prime periods $p$ and $q.$
\end{restatable}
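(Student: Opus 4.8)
The plan is to prove both implications of the biconditional, treating separately the ``degenerate'' forms $b^{n},ab^{n}$ and the main form $aya$ coming from a Christoffel power. Throughout I would use two immediate consequences of singularity, obtained by specialising the factorisation $x=\overline u v w$ to $u=\varepsilon$ and to $w=\varepsilon$. Taking $u=\varepsilon$ with $w\neq\varepsilon$ forces $w<u$ (since $\varepsilon$ is a proper prefix of $w$, so $w<\varepsilon$ under the adopted convention), and the biconditional collapses to the statement that every non-palindromic proper prefix $v$ of $x$ satisfies $v<\overline v$; symmetrically, $w=\varepsilon$ yields that every non-palindromic proper suffix $v$ satisfies $v>\overline v$. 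I would also record at the outset that condition $\Ks_{\max}$ is invariant under reversal (matching the comparisons $v<\overline v$ and $w<u$ of a factorisation of $x$ with those of the mirrored factorisation of $\overline x$), which justifies the phrase ``$x$ or $\overline x$'' and lets me verify each form only once.

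For the forward direction I would first invoke Ramharter's result \cite{Ram05,Ram06} that a finite binary singular word is balanced, hence Sturmian. I would then use the one-sided prefix constraint to pin down the extreme letters: if $x$ begins with $b$ and is not a power of $b$, the shortest prefix $v=b^{j}a$ reaching the first $a$ is a non-palindromic proper prefix with $v>\overline v$, a contradiction unless nothing follows, forcing $x=b^{j}a=\overline{ab^{j}}$; thus a $b$-initial singular word is $b^{n}$ or, up to reversal, $ab^{n}$. This reduces matters to $x$ beginning with $a$ and containing a $b$, and applying the suffix constraint in the same way (exactly as in the failure of $aab$) shows that such an $x$ must also end with $a$ unless $x=ab^{n}$; in the remaining case $x=aya$. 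It then remains to prove that $ayb$ is a power of a Christoffel word. Here I would combine balancedness with the full strength of singularity to show that $ayb$ is lexicographically extremal among its conjugates (a Lyndon-type condition) and that its central factor realises the extremal case of the Fine--Wilf theorem \cite{FW} with two coprime periods; by the classical dictionary between extremal Fine--Wilf words, central words, and Christoffel words this is precisely the assertion $ayb=C_{p,q}^{m}$. The palindromic case $m=1$ recovers $aya=aua$ with $u$ the central word, in agreement with Ramharter's correspondence.

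For the backward direction I would verify $\Ks_{\max}$ directly on each listed form. The words $b^{n}$ and $\varepsilon$ have only palindromic factors, so the condition holds vacuously; for $ab^{n}$ the only non-palindromic factors are the prefixes $ab^{i}$, and since the unique $a$ sits at the front, every factorisation with non-palindromic $v$ has $u=\varepsilon$, where both sides of the biconditional hold. The substantial case is $x=aya$ with $ayb=C^{m}$ a power of the Christoffel word $C=aub$, $u$ a palindrome. Here I would exploit the rigid periodic and palindromic structure of $C^{m}$: in a general factorisation $x=\overline{u'}vw$, balancedness together with the palindrome $u$ forces $v$ and the two outer blocks to be windows of the bi-infinite periodic word $C^{\ints}$, and one checks that whether $v<\overline v$ or $v>\overline v$ for a non-palindromic window is governed by the very comparison of periodic shifts that decides whether $w<u'$ or $w>u'$, giving the required equivalence $v<\overline v\iff w<u'$ for all factorisations.

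The main obstacle is the core equivalence, for $x=aya$, between singularity and ``$ayb$ is a power of a Christoffel word.'' Both directions require upgrading the local, single-factorisation constraints into exact periodicity: in the forward direction one must show that the constraints from all factorisations conspire to produce two coprime periods realising the extremal Fine--Wilf configuration rather than merely a balanced word (which need not be a Christoffel power, as $abaab$ shows), and in the backward direction one must check the biconditional uniformly across every split of $C^{m}$. Controlling all factorisations at once, as opposed to the one-sided prefix and suffix reductions, is where the argument is most delicate, and it is the palindromicity of the central word together with the Lyndon/conjugacy extremality of $C$ that makes it go through.
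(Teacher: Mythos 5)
Your opening reductions are sound and match the paper's \Cref{t:a1st}: specialising the factorisation to $u=\eps$ (resp.\ $w=\eps$) does pin a singular $x$ down to $b^n$, $ab^n$ or $aya$ up to reversal, and singularity is indeed reversal-invariant. The gap is that the entire core of the theorem --- the equivalence, for $x=aya$, between singularity and ``$ayb$ is a power of a Christoffel word'' --- is left as a programme rather than an argument, and the one concrete structural claim you do make in the backward direction is false. You assert that in a factorisation $x=\overline{u'}vw$ balancedness forces the blocks to be ``windows of the bi-infinite periodic word $C^{\ints}$''; but $aya=C^m b^{-1}a$ is \emph{not} a factor of $C^{\ints}$: its suffix $aua$ ($u$ the central word of $C$) has a different Parikh vector from every conjugate of $C$, and the length-$|C|$ factors of $C^{\ints}$ are exactly those conjugates. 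So any block containing the final $a$ escapes the periodic word, and the ``comparison of periodic shifts'' you appeal to does not apply to it. In the forward direction you correctly note that balancedness alone is insufficient (your example $abaab$), but the proposed remedy --- extracting conjugacy-extremality and the extremal Fine--Wilf configuration from ``the full strength of singularity'' --- is precisely the hard step, and it is not carried out.

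The paper closes this gap by interposing a different, local condition: \Cref{Stbispecial} shows that $x=aya$ is singular iff $y$ is a \emph{bispecial Sturmian} word, i.e.\ $ay$, $by$, $ya$, $yb$ are all balanced, and the Christoffel characterisation is then imported from Fici's Theorem~3.11. Forward, \Cref{bal} gives $ay,ya$ balanced for free, and an unbalance witness $aza,bzb$ inside $by$ or $yb$ is converted into an explicit reversible factorisation of $x$. Backward, a factorisation $x=\overline uvw$ is split into three cases ($u$ a proper prefix of $w$, $w$ a proper prefix of $u$, neither), each resolved using only the balancedness of $ay$ or $yb$ --- no periodicity of $C^m$ is ever invoked. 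To salvage your plan you would need either to prove the Fine--Wilf/Lyndon claims directly (essentially redoing Theorem~3 of \cite{Ram05}) or to route through the bispecial condition as the paper does.
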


If we now consider bi-infinite words $x$ over a binary alphabet $\A=\{a,b\}$, then we show that the singular property in \Cref{sing1} coincides with the so-called Markoff property,  first identified by A.A. Markoff in \cite{Ma1} in his study of minima of binary quadratic forms, and again in \cite{Ma3} (see in particular page 28) in which he answers a question posed by J.~Bernoulli in \cite{B}. The Markoff property applies to bi-infinite words $x$ over a binary alphabet $\A=\{a,b\}$ and may be formulated as follows:

\begin{enumerate}
\item[(M)] For each factorization $x=\overline u a'b'w$ with $\{a',b'\}=\{a,b\}$ and $u,w \in \A^\nats$, either $u=w$ or if $i\in \nats$ is the least index $j$ for which $u_j\neq w_j$, then $u_i=b'$ and $w_i=a'$.  \end{enumerate}

Although both $\Ks_{\max}$ and property (M) were discovered in the context of extremal problems involving continued fractions, the former relates to semi-regular continuants while the latter concerns inequalities relating real numbers and convergents of infinite continued fractions. So the connection between the two is not fully transparent. 
As was first observed by T.W. Cusick and M.E. Flahive in \cite{CF} and later proved by C. Reutenauer in \cite{Reu}, property (M) is equivalent to the balanced property of Morse-Hedlund \cite{MH}. 



We show  that a bi-infinite binary word verifies the Markoff property (M) if and only if it is singular (see Corollary~\ref{MP}).
One way to establish the equivalence is to show that a bi-infinite binary word is singular if and only if it is balanced (see Proposition~\ref{fullbal}) and then use Theorem~3.1 in \cite{Reu}. Alternatively, a direct proof (which bypasses the balance property) may be given using Sturmian morphisms. As for one sided-infinite binary words, we show 

\begin{restatable}{mainthm}{LyndSt}
\label{Sturmian}
Let $x\in \{a,b\}^\nats$ be an aperiodic binary (one sided) infinite word. Then $x$ is singular if and only if $x$ is a Sturmian Lyndon word.
\end{restatable}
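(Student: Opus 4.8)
The plan is to separate the two features of a Sturmian Lyndon word---being Sturmian (equivalently, aperiodic and balanced) and being an infinite Lyndon word---and to show that each corresponds to a distinct family of factorizations in the singular condition. First I observe that for a one-sided word $x\in\{a,b\}^\nats$, in any factorization $x=\overline u v w$ as in \Cref{sing1} the prefix $\overline u v$ is necessarily finite, so $u,v\in\{a,b\}^*$ while $w=x_{|\overline u v|+1}x_{|\overline u v|+2}\cdots$ is the infinite suffix. In particular $u$ is finite and $w$ infinite, so the hypothesis $u\neq w$ is automatic, and the comparison $w<u$ must be read with the prefix convention of \Cref{sing1}: if $u$ is a prefix of $w$ then $w<u$, and otherwise $w<u$ is decided by the first of the finitely many positions of $u$ at which $w$ and $u$ disagree.

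For the direction ``singular $\Rightarrow$ Sturmian Lyndon'', I would first show that a singular $x$ is balanced by adapting the argument behind \Cref{fullbal}: if $x$ were unbalanced there would be a palindrome $p$ with $apa$ and $bpb$ both factors of $x$, and choosing occurrences far enough to the right I would assemble a factorization $x=\overline u v w$ violating the equivalence $v<\overline v \iff w<u$; since $x$ is aperiodic, being balanced makes it Sturmian. To obtain the Lyndon property I would use the factorizations with $u=\varepsilon$: here $w<\varepsilon$ always holds (the empty word is a proper prefix of $w$), so singularity forces $v<\overline v$ for every non-palindromic prefix $v$ of $x$. I would then prove that, for a Sturmian word, this prefix condition is equivalent to $x<S^k x$ for all $k\ge 1$, i.e.\ to $x$ being an infinite Lyndon word; the bridge between the reversal comparison $v$ versus $\overline v$ and the suffix comparison $x$ versus $S^k x$ uses that the factor set of a Sturmian word is closed under reversal.

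For the converse, let $x$ be Sturmian and Lyndon and take any admissible factorization $x=\overline u v w$ with $v\neq\overline v$. When $u$ is not a prefix of $w$, the comparison $w<u$ is an ordinary lexicographic comparison decided within $|u|$ letters, and I would reduce the required equivalence to the bi-infinite case by extending $x$ on the left to a balanced (equivalently, Markoff) bi-infinite word and invoking \Cref{MP}; the balance of $x$ then yields $v<\overline v \iff w<u$ exactly as in the two-sided setting. The remaining case, in which $u$ is a prefix of $w$ and hence $w<u$ by convention, is precisely where the one-sided order differs from the bi-infinite one: here I must show $v<\overline v$, and this is where the Lyndon hypothesis enters, via the equivalence established in the forward direction.

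The main obstacle is the interplay between the nonstandard prefix convention in the order and the length mismatch between the finite $u$ and the infinite $w$: this is the one point at which one-sided singular words genuinely diverge from bi-infinite ones, and it is controlled entirely by the Lyndon property. Consequently the technical heart of the proof is the equivalence, for Sturmian $x$, between ``$v<\overline v$ for every non-palindromic prefix $v$'' and ``$x<S^k x$ for all $k\ge1$'', whose proof rests on the palindromic richness and reversal-closure of the Sturmian factor set.
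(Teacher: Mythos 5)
Your overall architecture is sound and largely parallels the paper's: the forward direction (balance via \Cref{bal}, then the Lyndon property extracted from the factorizations with $u=\varepsilon$ together with reversal-closure of the Sturmian language) is essentially the paper's \Cref{Lyndon}. In the converse, your treatment of the case where $u$ is not a prefix of $w$ takes a genuinely different route: the paper argues directly inside $x$ (writing $u=zeu'$, $w=zfw'$ with $e\neq f$, showing that $z$ and $\overline z$ are both right special, hence $z=\overline z$, and deriving $aza,bzb\in L(x)$ against balance), whereas you extend $x$ on the left to a bi-infinite balanced word and quote the two-sided result. Your reduction is valid: the left extension exists by K\"onig's lemma since every factor of a Sturmian word is left-extendable within $L(x)$, and when $u$ is not a prefix of $w$ the comparison $w<u$ is decided within the first $|u|$ positions, so it is unaffected by replacing $u$ with any infinite word having $u$ as a prefix. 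This buys a cleaner case at the cost of the extension lemma; note you should invoke \Cref{fullbal} (singular $\iff$ balanced for bi-infinite words) rather than \Cref{MP}, and you should also use the reduction from the Preliminaries allowing you to assume $v$ begins and ends in distinct letters.

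The one genuine gap is the remaining case of the converse, where $u$ is a (possibly empty) proper prefix of $w$. You claim this follows ``via the equivalence established in the forward direction,'' but that equivalence only concerns factorizations with $u=\varepsilon$: it says that non-palindromic \emph{prefixes} $v$ of $x$ satisfy $v<\overline v$. When $u\neq\varepsilon$ the word $v$ is not a prefix of $x$, and the statement does not reduce to the prefix case, since comparing $\overline u v$ with its reversal is a different comparison altogether. A separate (short) argument is needed, and it is the one the paper gives: if $\overline v< v$, then $v$ begins with $b$ and ends with $a$, so, since $w$ begins with $u$, the word $au$ is a factor of $x$; by reversal-closure $\overline u a\in L(x)$, hence some proper suffix of $x$ begins with $\overline u a$, while $x$ itself begins with $\overline u b$, contradicting the Lyndon property. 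You must add this step explicitly; as written, the appeal to the forward-direction equivalence does not cover it.
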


Recall that a finite or infinite word $x$ over an ordered alphabet is said to be \emph{Lyndon} if and only if $x$ is lexicographically smaller than each of its proper suffixes.
 
The next question is to understand bi-infinite singular words over higher alphabets. We show that the singular property  is the fundamental property which distinguishes the orbit structure of codings of symmetric interval exchange transformations from other subshifts of the same factor complexity including Arnoux-Rauzy subshifts \cite{AR}. A {\em symmetric $k$-interval exchange transformation} $\mathcal I$ is given by a probability vector of $k$ lengths $(\alpha_1,\ldots ,\alpha _k)$. The unit interval is partitioned
into $k$ subintervals of lengths $\alpha _1,\ldots ,\alpha_k$ labeled $1,2,\ldots ,k$ which are then re-arranged according to the permutation $\sigma (j)=k+1-j$. A natural coding of a point $x\in [0,1]$ under $\mathcal I$  is given by a bi-infinite word $(x_n)_{n\in \ints}$ over the alphabet $\{1,2,\ldots ,k\}$ where $x_n=i$ whenever  the $n$-th iterate of $x$ lies in $i$th interval. By the language of $\mathcal I$ we mean the language defined by all natural codings under $\mathcal I.$ We obtain the following characterization of singular bi-infinite words $x$ whose associated language is \emph{symmetric} (i.e., closed under reversal) and which constitutes a generalization of Theorem~3.1 in \cite{Reu}:

\begin{restatable}{mainthm}{symmiets}
\label{ietssymm}
Let $\A_k=\{1,2,\ldots ,k\}$ $(k\geq 2)$ and let $x\in \A_k^\ints$ be uniformly recurrent and assume that each $i\in \A_k$ occurs in $x.$ Then the following are equivalent:
\begin{enumerate}
\item $L(x)$ is the language of a symmetric $k$-interval exchange transformation.
\item $x$ is singular and $L(x)$ is symmetric. 
\end{enumerate}
\end{restatable}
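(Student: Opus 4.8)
The plan is to prove the equivalence by characterizing both conditions in terms of the local combinatorial structure of the bispecial factors and the ordering of their extensions, reducing the statement to a careful analysis of how reversals interact with left/right extensions. First I would recall the structural theory of symmetric $k$-interval exchange transformations: the language of a symmetric $k$-IET is characterized by being a uniformly recurrent language of linear complexity $p(n)=(k-1)n+1$ (at least generically) whose bispecial factors are all \emph{ordinary} in a way compatible with the symmetric permutation $\sigma(j)=k+1-j$, and whose set of factors is closed under reversal. The key feature I would isolate is that, for the symmetric permutation, the left and right extensions of any bispecial factor are linked by an order-reversing correspondence: if $w$ is bispecial, the admissible left letters and admissible right letters pair up so that a larger right extension corresponds, under $\sigma$, to a smaller left extension. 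This is precisely the kind of order-reversal encoded in the singular condition, so the bulk of the work is translating the geometric ordering of the interval exchange into the lexicographic ordering appearing in Definition~\ref{sing1}.

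For the direction $(1)\Rightarrow(2)$, I would assume $L(x)$ is the language of a symmetric $k$-IET. Symmetry of the language is essentially built into the definition via the permutation $\sigma$, which reverses the order of the intervals; I would verify that the natural coding map intertwines the reversal of words with the reflection of $[0,1]$ sending $t\mapsto 1-t$, so that $L(x)$ is closed under reversal. To obtain singularity, I would take an arbitrary factorization $x=\overline u v w$ with $v\neq\overline v$ and $u\neq w$ and interpret the two sides $\overline u$ and $w$ as codings of the orbit of a single point $t$ read backwards and forwards from the occurrence of the central block. Because the order in which the intervals are swept out by $\mathcal I$ is governed by $\sigma$, the comparison $v<\overline v$ should translate into a statement about which side of a discontinuity the point $t$ falls on, and this in turn fixes the comparison $w<u$ with the correct (reversed) orientation. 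The heart of this direction is checking that the lexicographic convention on proper prefixes matches the boundary behavior at the partition points of the IET.

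For the converse $(2)\Rightarrow(1)$, I would start from a uniformly recurrent singular word $x$ with $L(x)$ symmetric and reconstruct an interval exchange realizing $L(x)$. The natural tool is the Rauzy graph / bispecial analysis: I would show that singularity forces every bispecial factor of $x$ to be what one calls \emph{neutral} or ordinary with the extension graph being a union of edges pairing left and right letters in the order-reversing pattern dictated by $\sigma$, which rigidly constrains the complexity to be $(k-1)n+1$ and the local rules to be those of a symmetric IET. Concretely, singularity applied to factorizations with $v$ a single bispecial factor should yield exactly the claim that among the first-difference positions of competing bi-infinite extensions, the letter on the $u$-side and the letter on the $w$-side are swapped by $\sigma$; assembling these local constraints over all bispecials and invoking the classification of minimal systems of this complexity (together with the known characterization of when such a minimal subshift is an IET coding) lets me identify $L(x)$ as a symmetric $k$-IET language. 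I expect the main obstacle to lie precisely here: ruling out the competing subshifts of the same factor complexity, such as the Arnoux-Rauzy systems, which share the linear complexity but fail singularity. The delicate point is to show that the order-reversing pairing of extensions is not merely necessary but \emph{characterizes} the symmetric permutation among all admissible permutations, so that no non-IET or non-symmetric-IET subshift can be singular; this is the generalization of Reutenauer's Theorem~3.1 from $k=2$ to arbitrary $k$ and is where the combinatorial argument will require the most care.
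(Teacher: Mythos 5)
Your overall instinct is right: the combinatorial heart of the equivalence is an order-reversing pairing between left and right extensions (what the paper calls the \emph{symmetric order condition}, \Cref{order}), and the paper's proof does reduce both implications to exactly that condition, via \Cref{H4} (for a word with symmetric language, singular $\iff$ symmetric order condition) together with the characterization of symmetric $k$-IET languages from \cite{FHZ} (\Cref{thm1}: symmetric order condition $+$ a measure condition, the latter supplied by uniform recurrence \`a la Boshernitzan). Your reflection argument for the symmetry of $L(x)$ in $(1)\Rightarrow(2)$ is a legitimate alternative to the paper's route (the paper instead deduces symmetry of $L(x)$ in \Cref{soc} by applying \Cref{thm1} to both $x$ and $\overline x$ and using that a symmetric IET is determined by its length vector).

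However, your plan for $(2)\Rightarrow(1)$ has a genuine gap. You propose to show that singularity forces complexity $(k-1)n+1$ and then invoke ``the classification of minimal systems of this complexity together with the known characterization of when such a minimal subshift is an IET coding.'' Both steps fail in the generality of the theorem. First, singularity does \emph{not} rigidly constrain the complexity: the paper's own example (the image of the Fibonacci word under $0\mapsto 1213$, $1\mapsto 12213$) is a uniformly recurrent singular word on three letters with symmetric language whose complexity is eventually $n+c$, not $2n+1$; it codes a symmetric $3$-IET \emph{without} i.d.o.c.\ and has a weak bispecial factor. Second, the characterization you appeal to (essentially \Cref{iets}, from \cite{FZ}) requires i.d.o.c.\ and condition (H5), which precludes weak bispecials; it therefore cannot certify membership in the class of all symmetric $k$-IET languages, which is what item~1 asserts. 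The paper avoids this by using the newer characterization (\Cref{thm1}) that needs no complexity or i.d.o.c.\ hypothesis, only the symmetric order condition and the measure condition. Without some such unconditional characterization, your reduction to bispecial/Rauzy-graph data cannot be completed; the i.d.o.c.\ case is treated separately in the paper as \Cref{idoc}, under an additional hypothesis. Relatedly, the singularity half of your $(1)\Rightarrow(2)$ is only a heuristic: to make ``$v<\overline v$ translates into which side of a discontinuity $t$ falls on'' precise you would in effect have to prove the symmetric order condition for IET languages, i.e.\ reprove the relevant half of \Cref{thm1}.
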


As in \Cref{Sturmian}, if we take $x$ to be a one-sided infinite word, then in item 1 of \Cref{ietssymm} we need to add the condition that $x$ is Lyndon. We also give a characterization of natural codings of symmetric $k$-interval exchange transformations satisfying Keane's infinite distinct orbit condition (or i.d.o.c.) in terms of infinite singular words.

\section{Preliminaries}

Let $\A=\{a_1,a_2,\ldots ,a_k\}$ be a  totally ordered set with $a_1<a_2<\cdots <a_k$. For each $x\in \A^+$ and $a\in \A$ we let $|x|_a$ denote the number of occurrences of the letter $a$ in $x$ and write $|x|=\sum_{a\in \A} |x|_a$ for the length of $x$. The \emph{Parikh vector} of $x$ is defined by $(|x|_{a_1},|x|_{a_2},\ldots,|x|_{a_k})$.  Two finite words $x,y \in \A^+$ are said to be \emph{abelian equivalent} if they define the same Parikh vector, i.e., $|x|_a=|y|_a$ for each $a\in \A$. We define the abelian class of a word $x\in \A^+$ to be the set of all words $y\in \A^+$ which are abelian equivalent to $x$. 
If $x\in \A^\nats \cup \A^\ints, $ then a factor $u$ of $x$ is said to be recurrent in $x$ if every suffix of $x$ contains an occurrence of $u$. We say $x$ is recurrent if every factor $u$ of $x$ is recurrent in $x$. 
For $x\in \A^+\cup \A^\nats \cup \A^\ints$, we let $L(x)$ denote its \emph{language}, i.e., the set of all factors of $x$.
We say $x$ is \emph{balanced} if for all factors $u,v$ of $x$ with $|u|=|v|$ we have $||u|_a-|v|_a|\leq 1$ for each $a\in \A$. For all other word combinatorial definitions which have not been explicitly defined herein, we refer the reader to \cite{Lot}. 

The total order $\leq $ on $\A$ defines a lexicographic order on $\A^*\cup \A^\nats$, also denoted $\leq $, where we write $x<y$ if either $y$ is a proper prefix of $x$ (so we are adopting the opposite convention of the true dictionary order) or if $x=ua_ix'$, $y=ua_jy'$ for some $u\in \A^*$, $x',y' \in \A^*\cup \A^\nats$ and $i<j$. For $x,y \in \A^*\cup \A^\nats$, we write $x\leq y$ if either $x=y$ or $x<y$.

\begin{defin}Let $x \in \A^*\cup \A^\nats \cup \A^\ints$. A factorization $x=\overline uvw$  (with $v\in \A^+$, $u,w\in \A^*\cup \A^\nats)$ and with $v\neq \overline v$ and $u\neq w$ is called \emph{reversible} (resp.~\emph{singular}) if  $v< \overline v$ and $u< w$ or $\overline v< v$ and $w<u$ (resp.~if $v< \overline v$ and $w< u$ or $\overline v< v$ and $u<w)$. We say $x$ is \emph{reversible} if $x$ admits a reversible factorization.
\end{defin}

It follows immediately that each word $x\in  \A^*\cup \A^\nats \cup \A^\ints$ is either reversible or singular, but not both. 
A reversible word may admit more than one reversible factorization. For example, if $\A=\{a,b\}$ with $a<b$, then the reversible word $x=aabb$ admits the reversible factorization $x=(a)(ab)(b)$ as well as $x=(a)(abb)\varepsilon$.
We also note that the property of being reversible (resp.~singular) is invariant under reversal, i.e.,  $x$ is reversible (resp.~singular) if and only if $\overline x$ is reversible (resp.~singular). To check that a word $x$ is singular, it suffices to show that any factorization $x=\overline u vw$ with $u\neq w$ and where $v$ begins and ends in distinct letters is singular.  In fact, writing $v=\overline zcyd z$ with $z,y \in \A^*$ and $c,d\in \A$ distinct, we see that if the factorization $x=\overline u v w$ is reversible, then so is the factorization  $x=\overline{ z u} (cyd) zw$.%

 \section{Finite singular words}

 We begin this section with a characterization of finite binary singular words. We then study finite ternary singular words and show that corresponding to each Parikh vector $\pv=(n_a,n_b,n_c)$ there exists a unique (up to reversal) singular word $x$ over the ordered alphabet $\{a<b<c\}$ whose Parikh vector is equal to $\pv$. We also describe an algorithm for constructing the singular word $x$. Finally, we show that on ordered alphabets of size greater than three, there may exist many abelian equivalent singular words. In particular, we show that the abelian class defined by the ordered Parikh vector $(n_a,n_b,n_c,n_d)=(1,2,1,2)$ over the ordered alphabet $\{a<b<c<d\}$ contains up to reversal two distinct singular words $x$ and $x'$ and that the maximizing arrangement for $\Ks(\cdot)$ depends on the actual values of the positive integers assigned to each of the letters $a,b,c$ and $d$.  
 
We begin with a simple lemma which applies to arbitrary ordered alphabets $\A$.
For a word $x\in\A^+$, let $\min x$ denote the smallest letter occurring in $x$.

\begin{lemma}
    \label{t:a1st}
    Let $\A$ be a totally ordered alphabet, $x\in \A^+$ be a singular word, and
    $a=\min x$. Then $x$ must either begin or end in $a$. If $x=ax'$
    (resp.~$x=x'a$) for some $x'\in\A^+$, then $x'$ ends (resp.~begins) with
    $\min x'$.
\end{lemma}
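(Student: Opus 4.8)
The plan is to reduce the whole statement to two boundary specializations of the singularity condition, obtained by taking $u=\varepsilon$ or $w=\varepsilon$ in the factorization $x=\overline u v w$. Specializing to $u=\varepsilon$, so that $x=vw$, I record the fact that \emph{every non-palindromic proper prefix $v$ of $x$ satisfies $v<\overline v$}: here $w\neq\varepsilon$, and under the opposite prefix convention $w<\varepsilon=u$, so the singularity equivalence ($v<\overline v$ iff $w<u$) forces $v<\overline v$. Symmetrically, taking $w=\varepsilon$, so that $x=\overline u v$, shows that \emph{every non-palindromic proper suffix $v$ of $x$ satisfies $\overline v<v$}, since now $u\neq\varepsilon$ gives $u<\varepsilon=w$, i.e.\ $w<u$ is false, forcing $\overline v<v$. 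These two facts, call them (A) and (B), are the engine of the argument, and each is a one-line check once the convention $t<\varepsilon$ for $t\neq\varepsilon$ is taken into account.

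For the first assertion I would argue by contraposition. Suppose $x$ does not begin with $a=\min x$, and let $x=tas$ be the factorization at the \emph{first} occurrence of $a$, so $t\in\A^+$ contains no $a$ and begins with some letter $c>a$. If this occurrence were not the last letter of $x$, then $s\neq\varepsilon$, making $ta$ a non-palindromic proper prefix (its first letter $c$ differs from its last letter $a$). Then (A) gives $ta<\overline{ta}$, while comparing first letters gives $\overline{ta}<ta$, since $\overline{ta}$ begins with $a<c$ — a contradiction. Hence the first $a$ is the last letter of $x$, that is, $x$ ends in $a$. Thus $x$ begins or ends in $a$.

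For the second assertion, assume $x=ax'$ and set $b=\min x'$; I must show $x'$ ends in $b$, equivalently that the last letter of $x$ is $b$. Suppose not; since the last letter $q$ of $x$ is a letter of $x'$ we have $q\geq\min x'=b$, so $q>b$. Let $x=rbs$ be the factorization at the \emph{last} occurrence of $b$, so $s\neq\varepsilon$ (because $q\neq b$) and $s$ contains no $b$. Since $b$ occurs inside $x'$, this last occurrence sits at a position past the leading $a$, so $r\neq\varepsilon$, and $bs$ is a non-palindromic proper suffix (first letter $b$, last letter $q>b$). Then (B) yields $\overline{bs}<bs$, whereas $\overline{bs}=\overline s\,b$ begins with $q>b$, giving $bs<\overline{bs}$ — a contradiction. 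Hence $x$ ends in $b$, as required. The parenthetical case $x=x'a$ then follows at once by applying this to $\overline x$, which is again singular, using $\min\overline{x'}=\min x'$.

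The one genuine subtlety — and the place where care is needed — is the boundary behaviour of the order: the inequalities (A) and (B) are driven entirely by the non-standard rule $t<\varepsilon$ for nonempty $t$, so the signs must be tracked exactly as above. A second point worth flagging is that in the second assertion $x'$ is \emph{not} assumed singular, so one cannot simply invoke the first assertion applied to $x'$; the argument must instead exploit the singularity of the full word $x$, which is precisely why I locate the critical suffix at the last occurrence of $b=\min x'$ inside $x=ax'$ rather than inside $x'$ alone.
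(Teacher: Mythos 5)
Your proof is correct, and it rests on the same mechanism as the paper's: specializing the singularity condition to factorizations $x=\overline u v w$ with $u=\eps$ or $w=\eps$ (your facts (A) and (B)) and then comparing first/last letters to locate the minimal letter — the paper does exactly this with the factorizations $\overline u\cdot avb\cdot\eps$ and $a\overline{u'}\cdot cv'b\cdot\eps$, just phrased directly rather than by contradiction. No gaps; the sign-tracking around the convention $t<\eps$ for $t\neq\eps$ is handled correctly.
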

\begin{proof}
    For $b\in \A$ and $u,v\in \A^*$, the factorization $x=\overline u\cdot avb\cdot\eps$ is 
    reversible if $u\neq \eps$ and $b\neq a.$  Thus $x$ has $a$
    as its first or last letter. Short of replacing $x$ by $\overline x$, we
    may assume without loss of generality that 
    $x=avb=ax'$. If $v=\overline{u'}cv'$ with $c\in\A$, then since
    the factorization $a\overline{u'}\cdot cv'b\cdot\eps$ is not reversible, we obtain
    $c\geq b$. Hence $b=\min x'$.
\end{proof}

\begin{rem}
\label{t:ends}
Note that \Cref{t:a1st} implies that for a singular word $x,$ if 
$x=sat$ with $s,t\in \A^+$ and $a=\min \A,$ then 
$a$ is a prefix of $s$ and a suffix of $t;$  whereas if $x$ begins in $c$ where $c=\max\A,$ then $x=c^nb$ for some $n\geq 1$ and $b\in \A.$

\end{rem}

\noindent We now study binary singular words.
  
 \begin{lemma}\label{bal} Let $\A$ be an ordered binary alphabet $\{a<b\}$ and let $x\in \A^*\cup \A^\nats\cup \A^\ints$ be singular. Then $x$ is balanced. 
 \end{lemma}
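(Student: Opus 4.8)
The plan is to prove the contrapositive: if a binary word $x$ over $\{a<b\}$ is \emph{not} balanced, then $x$ is reversible (i.e.\ not singular). So suppose $x$ is unbalanced. Then there exist factors $u,v$ of $x$ with $|u|=|v|$ but $||u|_b-|v|_b|\geq 2$. A standard fact about binary words (see \cite{Lot}) lets us upgrade this to a \emph{witness of minimal length}: there exist two factors of equal length $m$ whose numbers of $b$'s differ by exactly $2$, and by minimality these witnesses must have the form $aza$ and $bzb$ for a common central factor $z$ of length $m-2$. That is, $x$ contains both $aza$ and $bzb$ as factors, where $z\in\A^*$.

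With this structure in hand, the idea is to use the occurrences of $aza$ and $bzb$ to build an explicit reversible factorization of $x$. First I would reduce to a palindromic core by considering $z$ itself. If $z\neq\overline z$, then $z$ is already caught by a shorter factorization argument, so the delicate case is $z=\overline z$, i.e.\ $z$ is a palindrome. In that case $aza$ and $bzb$ are both palindromes, and the two occurrences sit inside $x$ at positions that let me write $x=\overline{u}\,vw$ so that the central block $v$ straddles one occurrence while the comparison of $v$ with $\overline v$ is governed by whether the flanking letter is $a$ or $b$, and the comparison of $u$ with $w$ by the letters appearing on the two sides of the two occurrences. Concretely, locate an occurrence of $aza$ and an occurrence of $bzb$; aligning the palindromic centers, the surrounding letters produce a factorization in which $v<\overline v$ forces $u<w$ (or the reversed inequalities), which is exactly the reversible condition.

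The main obstacle I anticipate is bookkeeping the positions so that a single factorization $x=\overline u v w$ simultaneously records both the $v$-vs-$\overline v$ comparison and the $u$-vs-$w$ comparison in a \emph{reversible} (not singular) way; the two occurrences of $aza$ and $bzb$ need not overlap conveniently, and one must argue that the minimality of $m$ rules out shorter obstructions that would spoil the intended alignment. Here the reduction remarked after the definition of reversible/singular words is useful: to test reversibility it suffices to consider factorizations where the central block begins and ends in distinct letters, so I can always trim $v$ down to a block of the form $c\,y\,d$ with $c\neq d$ and read off the two inequalities from $c,d$ (giving the $v$ vs $\overline v$ sign) and from the first point of disagreement between the one-sided extensions (giving the $u$ vs $w$ sign).

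Once the reversible factorization is exhibited in the palindromic-center case, the non-palindromic case $z\neq\overline z$ is handled by the same trimming reduction applied directly to one of the witnessing factors, since an internal asymmetry of $z$ already yields a short reversible factorization. Finally, because reversibility and balance are both invariant under reversal and under passing to factors, and because every $x\in\A^*\cup\A^\nats\cup\A^\ints$ is either singular or reversible but not both, concluding that every unbalanced $x$ is reversible gives exactly the claim that every singular $x$ is balanced, uniformly for finite, one-sided infinite, and bi-infinite words.
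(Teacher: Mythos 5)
Your overall strategy coincides with the paper's: pass to the contrapositive and reduce an imbalance to a palindrome $z$ with both $aza$ and $bzb$ occurring as factors. (This reduction is exactly Proposition~2.1.3 in Lothaire, so you need not re-derive the minimal-witness structure nor treat a separate case $z\neq\overline z$; the palindromicity of $z$ comes for free and is in fact essential below.) The genuine gap is that the decisive step --- actually producing a reversible factorization from the two occurrences --- is never carried out. You name it yourself as the ``main obstacle'' and offer only ``aligning the palindromic centers,'' but that is not the right construction: centering $v$ on one occurrence gives you no control over where $u$ and $w$ first disagree, which is precisely what the reversibility condition requires.

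The missing idea is that the occurrences of $aza$ and $bzb$ cannot overlap, so one lies entirely to the left of the other. Let $a'$ be the flanking letter of the earlier occurrence and $b'$ that of the later one (so $\{a',b'\}=\{a,b\}$), and take $v$ to be the factor of $x$ running from the \emph{last} letter of the earlier occurrence through the \emph{first} letter of the later one. Then $v$ begins in $a'$ and ends in $b'$, so $v\neq\overline v$ and $v<\overline v$ iff $a'<b'$. Moreover, in the resulting factorization $x=\overline u\,v\,w$, the word $u$ begins with $za'$ (using $z=\overline z$) and $w$ begins with $zb'$, so $u\neq w$ and $u<w$ iff $a'<b'$. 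The two comparisons therefore have the same sign, which is exactly a reversible factorization, contradicting singularity. Without the disjointness observation and this specific choice of $v$ spanning the gap between the two occurrences, the argument does not close; everything else in your write-up (the reversal invariance, the reduction to $v$ beginning and ending in distinct letters) is correct but peripheral.
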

 
 \begin{proof} Let $x$ be singular and assume to the contrary that $x$ is not balanced. By Proposition~2.1.3 in \cite{Lot},
 there exists a palindrome $z\in \A^*$ such that both $aza$ and $bzb$ are factors of $x$. Since the factors $aza$ and $bzb$ cannot overlap one another, there exist distinct $a',b'\in \A$ and a factorization  $x=\overline u v w$ where $v$ begins in $a'$ and ends in $b'$, $za'$ is a prefix of $u$ and $zb'$ a prefix of $w$. It follows that $a'<b'$ if and only if $u<w$ and hence we obtain a reversible factorization of $x$, a contradiction. \end{proof}
 
 \begin{thm}\label{Stbispecial} A finite word $x$ over a binary ordered alphabet $\A=\{a<b\}$ is singular if and only if $x$ or $\overline x$ is of the form $b^n,$ $ab^n,$ $(n\geq 0)$ or $aya$ where $y$ is a bispecial Sturmian word.
\end{thm}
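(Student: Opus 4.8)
The plan is to prove both directions by analyzing what the singular condition forces on a finite binary word, making essential use of \Cref{bal} (every singular word is balanced) and \Cref{t:a1st} together with \Cref{t:ends} (a singular word begins or ends in its smallest letter, and if it starts with $c=\max\A$ it has the rigid form $c^nb$). First I would dispose of the trivial cases: words in which only one letter appears, or which contain a single $a$. If $x$ contains no $a$ then $x=b^n$; if $x$ contains exactly one $a$, then by \Cref{t:a1st} it must be at an end, so $x=ab^n$ or $x=b^na=\overline{ab^n}$. These account for the first two listed forms, so the remaining analysis concerns words with $|x|_a\geq 2$.

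For the forward direction assume $x$ is singular with $|x|_a\geq 2$. By \Cref{t:a1st}, $x$ begins or ends in $a$, and by \Cref{t:ends} if $x=sat$ with $a$ strictly interior then $a$ is a prefix of $s$ and a suffix of $t$; iterating, I would argue that $x$ both begins and ends in $a$, so $x=aya$ for some $y\in\A^*$. The real content is to show that $y$ is bispecial Sturmian. Here I would invoke \Cref{bal}: since $x$ is balanced, so is every factor, and in particular $y$ is a balanced (hence finite Sturmian) word. To upgrade "Sturmian" to "bispecial" I would use the singular condition directly. Recall $y$ is bispecial exactly when both $ay$ and $by$ (equivalently $ya$ and $yb$) are Sturmian, i.e. when $y$ can be extended on both sides by both letters while remaining balanced. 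The factorizations of $x=aya$ of the form $x=\overline u\,v\,w$ with $v$ spanning the central block of $y$ should, through the singular inequalities, force precisely the left-right symmetry that characterizes bispeciality; I would translate the statement "$aya$ is singular" into the statement "$y$ is a palindrome whose first-return/extension structure is symmetric," which is one of the standard equivalent definitions of a bispecial Sturmian word.

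For the converse I would show each listed form is singular. The cases $b^n$ and $ab^n$ are immediate: any factorization $x=\overline u\,v\,w$ with $v\neq\overline v$ has at most one $a$ available, and a short case check shows the singular inequality $v<\overline v \iff w<u$ holds vacuously or trivially (for $b^n$ the only factorizations with $v\neq\overline v$ cannot arise since all letters are equal). For $x=aya$ with $y$ bispecial Sturmian, I would use the characterization of bispecial Sturmian words as palindromes arising from Christoffel words (anticipating the reformulation in \Cref{t:Chr}), together with balance, to verify the singular condition on every factorization. The key point is that for a balanced word the comparison $v<\overline v$ is governed entirely by the relative positions of the $a$'s, and the palindromic/bispecial symmetry of $y$ makes the left context $u$ and right context $w$ compare in exactly the matching way.

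The main obstacle I expect is the bispecial half of the equivalence: balance via \Cref{bal} gives Sturmian "for free," but pinning down \emph{bispecial} requires correctly matching the combinatorial definition of bispeciality (simultaneous left and right extendability by both letters, equivalently the palindromic-factor structure) to the family of factorizations $\overline u\,v\,w$ that the singular condition constrains. The delicate step is checking that no factorization cutting through the interior of $y$ produces a reversible configuration; this amounts to showing that the required inequalities $v<\overline v \iff w<u$ are never violated, which I would handle by reducing, as in the remark following the definition of reversible, to factorizations where $v$ begins and ends in distinct letters, and then exploiting balance to control the lexicographic comparisons by counting occurrences of $a$ in prefixes of $u$ and $w$.
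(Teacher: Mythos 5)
Your outline follows the same skeleton as the paper's proof (dispose of $|x|_a\le 1$ via \Cref{t:a1st}, write $x=aya$, get balance of $y$ from \Cref{bal}, and reduce bispeciality to balance of $ay,by,ya,yb$), but the step that carries all the content is replaced by an incorrect characterization. You propose to translate ``$aya$ is singular'' into ``$y$ is a palindrome whose extension structure is symmetric.'' That is not an equivalent definition of a bispecial Sturmian word: bispecial Sturmian words need not be palindromes (the paper's own remark after the theorem points out that $ab$ is bispecial Sturmian but not central), and correspondingly $x=aaba$ is a singular word of the form $aya$ with $y=ab$ not a palindrome. Any argument in either direction that leans on palindromic symmetry of $y$ to match the left context $u$ against the right context $w$ will break on such examples. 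The same issue infects your converse, where you again invoke ``the palindromic/bispecial symmetry of $y$.''

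What the proof actually needs, and what your proposal only gestures at, is the following. For the forward direction: since $x$ is balanced, it suffices to show $by$ and $yb$ are balanced; if $by$ were not, there is a palindrome $z$ with $aza,bzb\in L(by)$, and balance of $y$ forces $bzb$ to be a \emph{prefix} of $by$, which yields a concrete reversible factorization $x=\overline{u}vw$ with $u=za$, $w=zay'a$, $v$ beginning in $b$ and ending in $a$ (so $\overline v<v$ and $w<u$ because $u$ is a proper prefix of $w$) --- a contradiction. For the converse: take $v=av'b$ with $v<\overline v$ and split into three cases according to whether $u$ is a proper prefix of $w$, $w$ a proper prefix of $u$, or neither; the second case is killed because it would make $a\overline r av'brb$ a suffix of $yb$, contradicting balance of $yb$, and the third case uses balance of $ay$ (not merely of $x$) to force the first disagreeing letters to satisfy $c=b$, $d=a$, hence $w<u$. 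Your plan of ``counting occurrences of $a$ in prefixes of $u$ and $w$'' does not by itself produce these case distinctions, and in particular it does not explain why balance of the \emph{extended} words $by$, $yb$, $ay$ (rather than of $x$ or $y$ alone) is exactly what is needed --- which is the whole reason bispeciality, rather than mere balance of $y$, appears in the statement.
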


A word $y\in\{a,b\}^*$ is a {\it bispecial Sturmian word} if $ay,by,ya$ and $yb$ are each a factor of an infinite Sturmian word, or equivalently if they are each balanced (see Proposition~2.1.17 in \cite{Lot}).  

\begin{rem} If $x\in\{a,b\}^\nats$ is a Sturmian word, then the bispecial factors of $x$ are called \emph{central words} and are  bispecial Sturmian words. The factors of $x$ of the form $aya$, with $y$ a central word, are already known in the literature and called \emph{singular Sturmian words} \cite{WW}. The reason for the term \emph{singular} is that if $n=|y|+2$, then the $n+1$ factors of $x$ of length $n$ are partitioned into two distinct abelian classes, one class contains $n$ words (which are all cyclically conjugate to the Christoffel word $ayb$) while the other class consists only of the word $aya$. Thus relative to the order $a<b,$ every singular Sturmian word of the form $aya$ is singular in the sense of \Cref{sing1}, but not every singular binary word of the form $aya$ is a singular Sturmian word  since a bispecial Sturmian word need not be central (e.g. $ab$).\end{rem}
 
 \begin{proof}[Proof of \Cref {Stbispecial}]
 Assume first that $x\in\A^+$ is singular. By \Cref{t:a1st}, if $|x|_a=1$, then
 $x=ab^n$ or $x=b^na$ with $n=|x|_b$, whereas if $|x|_a\geq 2$, then we may
 write $x=aya$ for some $y\in\A^*$. To see that $y$ is a bispecial Sturmian
 word, it suffices to show that each of $ay,by,ya$ and $yb$ is balanced (see
 Proposition~2.1.17 in \cite{Lot}). Since $x$ itself is balanced by \Cref{bal},
 it suffices to show that $by$ and $yb$ are balanced. Let us assume to the
 contrary that $by$ is not balanced, and pick a palindrome $z$ such that $aza$
 and $bzb$ are factors of $by$. Since $y$ is balanced, it follows that $bzb$ is
 a prefix of $by$. 
 Thus we can write $by=bz\cdot v\cdot zay'$ with $y'\in\A^*$ and where $v$
 begins in $b$ and ends in $a$.  This gives a factorization $x=\overline u v w$
 where $u=za$ and $w=zay'a$. It follows that  $\overline v <v$ and $w<u$ (since $u$ is a
 proper prefix of $w$) whence $x$ is reversible, a contradiction.
 Similarly, one shows that $yb$ is also balanced. 
 
 For the converse, we first note that any word of the form $b^n$ or $ab^n$ is clearly singular. So let us assume that $x=aya$ with $y$ a
 bispecial Sturmian word.
 To see that $x$ is singular, fix a factorization $x=\overline uvw$ with $v=a'v'b',$  $u\neq w$ and $\{a',b'\}=\A.$ Short of replacing $x$ by $\overline x$ we may assume without loss of generality that $v<\overline v$, whence $v=av'b.$ We will show that $w<u.$
 If $u$ is a proper prefix of $w$, then clearly $w<u$.  So let us next suppose that $w$ is a proper prefix of $u$.  If $w$ is empty, then $v=av'b$ is a suffix
 of $x$, which contradicts $x=aya$.
 If $w\neq\eps$ then we may write $w=ra$ and $u=rau'$ for some $r\in \A^*$ and
 $u'\in \A^+$. It follows that $a\overline r vw=a\overline rav'bra$ is a proper
 suffix of $x$ and hence $a\overline rav'brb$ is a suffix of $yb$, 
 contradicting that $yb$ is balanced.
 
 Finally, if neither $u$ nor $w$ are proper prefixes of one another, then we may write $u=rcu'$ and $w=rdw'$ where $r,u',w' \in \A^*$ and
 $\{c,d\}=\A$. Thus
 \[x=\overline u v w=\overline{u'}c\overline r av'brdw'.\]
 If $w'$ is empty, then $d=a$ and so $c=b$ and $w<u$. If $w'\neq\eps$, then
 $c\overline r av'brd$ is a factor of $ay$ and since $ay$ is
 balanced, it follows that $d=a$ and $c=b,$ whence  $w<u$ as required.
 \end{proof} 
 
\noindent As a consequence, we can now prove
\finitebinary*
 
\begin{proof} This follows immediately from the fact that a word $y\in\{a,b\}^*$ is a bispecial Sturmian word if and only if $ayb$ or
$a\overline yb$ is a power of a Christoffel word $C_{p,q}$ with $\gcd(p,q)=1$ (see Theorem~3.11 in \cite{Fici}). \end{proof}

\begin{example} \Cref{t:Chr} may be used to construct binary singular words having a specified Parikh vector $(n_a,n_b)$ via standard algorithms for constructing Christoffel words. For example, suppose we want to find a singular word $x\in \{a,b\}^+$ whose corresponding Parikh vector is $(7,14)$.
Then we shall first build a word of the form $x'=ayb$ (with $y$ a Sturmian bispecial) with Parikh vector $(6,15)$ and then $x=aya=x'b^{-1}a$.
To construct $x'$, we first note that $\gcd(6,15)=3$ and hence we must find the Christoffel word $C_{2,5}$. Using standard algorithms (see~\cite{BLRS})
we find that $C_{2,5}=abbabbb$. It follows that $x'=C_{2,5}^3=abbabbb\cdot abbabbb\cdot abbabbb$ and hence $x=  abbabbb\cdot abbabbb\cdot abbabba$.
\end{example}

Before moving to the ternary case, let us give a few more results which apply
to arbitrary ordered alphabets.
We say that a letter $a$ is \emph{separating} for a word $x\in \A^+$ if $a$
occurs in every factor of $x$ of length $2$. Notice that two distinct letters
$a,b$ are both separating for $x$ if and only if $x$ is one of the following,
for a suitable $n\geq 0$: $(ab)^n$, $(ba)^n$, $(ab)^na$, or $(ba)^nb$.
\begin{lemma}
    \label{t:sep}
    Let $d\in\{\min \A,\max \A\}$. If a singular word $x$ has $dd$ as a
    factor, then $d$ is separating for $x$.
\end{lemma}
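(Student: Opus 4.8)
The plan is to argue by contradiction within the hypotheses: I assume $x$ is singular and contains an occurrence of $dd$, and suppose toward a contradiction that $d$ is not separating, i.e.\ some factor $gh$ of $x$ has $g,h\neq d$. I will then exhibit an explicit reversible factorization of $x$, contradicting singularity. The point is that the two ingredients ``$dd$ occurs'' and ``a $d$-avoiding pair occurs'' already suffice on their own; I do not expect to need \Cref{t:a1st} or \Cref{t:ends}.

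First I would use the reversal symmetry. Since the singular and reversible properties are both invariant under $x\mapsto\overline x$, and since $\overline{dd}=dd$ while a $d$-avoiding pair reverses to a $d$-avoiding pair, I may assume without loss of generality that some occurrence of $dd$ lies to the left of some occurrence of a pair $gh$ with $g,h\neq d$: I pick any such double and any such pair, and if in $x$ the pair precedes the double I simply pass to $\overline x$ (which is also singular and still contains both). Writing this double as $x_{i-1}x_i$ and this pair as $x_j x_{j+1}$, disjointness of the two occurrences together with ``double to the left'' forces $i<j$.

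The heart of the proof is then the single factorization
\[
\overline u=x_1\cdots x_{i-1},\qquad v=x_i\cdots x_j,\qquad w=x_{j+1}\cdots x_n .
\]
By construction $v$ begins with $x_i=d$ and ends with $x_j\neq d$, so $v\neq\overline v$ and the comparison of $v$ with $\overline v$ is settled by their first letters; likewise $u$ begins with $x_{i-1}=d$ and $w$ begins with $x_{j+1}\neq d$, so $u\neq w$ and that comparison is also settled by first letters. If $d=\min\A$ this yields $v<\overline v$ (because $d<x_j$) together with $u<w$ (because $d<x_{j+1}$); if $d=\max\A$ it yields $\overline v<v$ together with $w<u$. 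In either case the factorization meets the definition of \emph{reversible}, the desired contradiction.

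The only place where the unusual lexicographic convention (``$u<v$ when $v$ is a proper prefix of $u$'') could cause trouble is in the comparisons of $v$ with $\overline v$ and of $u$ with $w$, and the main care in the write-up is to sidestep it. My choice does exactly this: $u$ and $w$ are both nonempty (we have $i\geq 2$ since $x_i$ is the second letter of the double, and $x_{j+1}$ exists since $gh$ has length $2$), and each relevant comparison is decided at the first letter, where the two words already differ—so the prefix rule never enters. Beyond orienting the double and the pair correctly via reversal and bookkeeping the two parallel sign cases $d=\min\A$ and $d=\max\A$, I anticipate no substantive obstacle.
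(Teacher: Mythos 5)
Your proof is correct and is essentially the paper's own argument: the paper likewise reduces by reversal to the case where $dd$ precedes the $d$-avoiding pair and exhibits the reversible factorization $x=\overline{u}d\cdot dva\cdot bw$, with all comparisons decided at the first letters since $d$ is either below or above both letters of the pair. Your version just makes the index bookkeeping and the nonemptiness of $u,w$ explicit.
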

\begin{proof}
    Assume to the contrary that $x$ contains both $dd$
    and $ab$ as factors, with $a,b\in \A\setminus\{d\}.$  We may assume without loss
    of generality that $dd$ occurs in $x$ before $ab$, and hence  
    $x=\bar ud\cdot dva\cdot bw$ for suitable $u,v,w\in \A^*$. As $d$ is either
    smaller or bigger than both $a$ and $b$, this factorization is
    reversible, a contradiction.
\end{proof}


\begin{lemma}
	\label{t:asep}
    Let $a=\min\A$, and $x\in\A^*$ be a singular word with $|x|_a>1$.
    Then $a$ is separating in $x$ if and only if
    \begin{equation}\label{ineqsep}
        |x|_a\geq\sum_{a'\in\A\setminus\{a\}}|x|_{a'}+1,
    \end{equation}
    and $aa\in\F(x)$ if and only if the inequality is strict.
\end{lemma}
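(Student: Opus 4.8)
The plan is to characterize when the minimal letter $a$ is separating in a singular word in terms of its Parikh vector, and the natural approach is to combine the structural constraints already established with a counting argument. Let me set notation: write $m=|x|_a$ for the number of occurrences of $a$, and $N=\sum_{a'\in\A\setminus\{a\}}|x|_{a'}$ for the number of non-$a$ letters, so $|x|=m+N$. The claim is that $a$ is separating if and only if $m\geq N+1$, with equality in the strictness corresponding to whether $aa$ appears.

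First I would treat the easier ``only if'' direction of the equivalence. If $a$ is separating, then $a$ occurs in every length-$2$ factor, which forces every non-$a$ letter to be immediately preceded or followed by an $a$ in a way that prevents two non-$a$ letters from being adjacent. Thus the word has no factor of the form $a'b'$ with $a',b'\neq a$, so between any two consecutive non-$a$ letters there is at least one $a$. A direct interval-counting argument on the positions of the $N$ non-$a$ letters then yields $m\geq N+1$ (one must be slightly careful at the two ends of the word, but \Cref{t:a1st} tells us $x$ begins or ends in $a$, which pins down the boundary contribution). The inequality is strict precisely when somewhere two $a$'s sit adjacent, i.e.\ when $aa\in\F(x)$, giving the refinement.

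The harder, more interesting direction is ``if'': assuming the Parikh inequality \eqref{ineqsep}, I must deduce that $a$ is separating. Here I would argue by contradiction using singularity. Suppose $a$ is not separating; then $x$ contains a factor $a'b'$ with $a',b'\in\A\setminus\{a\}$. By \Cref{t:sep} (applied with $d=a=\min\A$), the presence of $aa$ would force $a$ to be separating, so under the non-separating hypothesis we may work toward showing $aa\notin\F(x)$ anyway and then directly bound $m$. The cleanest route is probably the contrapositive of the counting: if $a$ is not separating, there exist two consecutive non-$a$ letters, and I would show that \emph{every} block of $a$'s between consecutive non-$a$ letters has length exactly one except possibly for boundary effects, then count to contradict $m\geq N+1$. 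The key leverage is that \Cref{t:a1st} and \Cref{t:ends} tightly constrain how $a$'s can cluster in a singular word: the minimal letter must appear at an end and recursively as the extreme letter of the remaining word, which rules out ``wasting'' $a$'s in interior blocks while leaving two non-$a$ letters adjacent.

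The main obstacle I anticipate is making the counting argument fully rigorous at the two endpoints and in the presence of a single run $aa$: the relationship between ``$a$ separating,'' ``$aa$ a factor,'' and the exact value of $m-N$ is a knife-edge, so I expect to invoke \Cref{t:sep} repeatedly to show that $aa\in\F(x)$ already implies $a$ separating, which collapses several cases. Concretely, \Cref{t:sep} says that for $x$ singular, $aa\in\F(x)$ immediately gives that $a$ is separating; combined with the remark that two separating letters force $x$ to be an alternating word, this tells us that once $aa$ appears the word is essentially of the form where $a$ alternates with a single other letter, and then the inequality is strict by inspection. The genuinely delicate part is the boundary bookkeeping showing that when $a$ is separating but $aa\notin\F(x)$ we get exactly $m=N+1$ rather than a larger value, and conversely; I would handle this by writing $x$ in the canonical alternating form dictated by \Cref{t:sep} and reading off the Parikh vector directly, which converts the whole statement into an elementary verification.
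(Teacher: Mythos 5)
Your outline follows essentially the same route as the paper's proof: both directions reduce to counting the runs of $a$, using the fact that $|x|_a>1$ forces $x$ to begin \emph{and} end with $a$ (via \Cref{t:a1st}), and the converse hinges on \Cref{t:sep} to exclude $aa\in\F(x)$ once $a$ fails to be separating, after which the count of the $N=\sum_{a'\neq a}|x|_{a'}$ non-$a$ letters contradicts \eqref{ineqsep}. One correction: your justification of the strictness clause invokes the remark that two separating letters force an alternating word, but that remark concerns two \emph{distinct} separating letters; a singular word with $a$ separating and $aa\in\F(x)$ need not alternate $a$ with a single other letter (it may contain several distinct non-$a$ letters and runs of $a$ of varying lengths). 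The strictness nevertheless drops out of the count you already set up: writing $x=a^{k_0}b_1a^{k_1}\cdots b_Na^{k_N}$ with every $k_i\geq 1$ (here the $b_j$ are the non-$a$ letters), one gets $|x|_a=\sum k_i\geq N+1$ with equality exactly when all $k_i=1$, i.e.\ exactly when $aa\notin\F(x)$. Finally, do state explicitly that \emph{both} ends of $x$ are $a$ rather than just one, since \Cref{t:a1st} alone only gives one end; this is precisely where the hypothesis $|x|_a>1$ enters, and without it the boundary count yields only $|x|_a\geq N$.
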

\begin{proof}
	By \Cref{t:a1st}, $x$ begins and ends with $a$. Thus if $a$ is
	separating for $x,$ then every occurrence in $x$ of a letter $a'\neq a$ must be immediately preceded by $a.$  In particular, if $a$ is separating,
	then (\ref{ineqsep}) holds.
	
 	Conversely, assume (\ref{ineqsep}) holds, and suppose to the contrary that $a$
 	is not separating for $x$. This means that $x$ has at least one factor
 	$bc$ with $b,c\in\A\setminus\{a\}$. But then (\ref{ineqsep}) implies
 	$aa\in\F(x)$, which is absurd in view of \Cref{t:sep}.
    It is now clear that $aa$ occurs if and only if the inequality in (\ref{ineqsep}) is strict.
\end{proof}

\noindent The following analogue for $\max\A$ can be proved in a similar way.
\begin{lemma}
	\label{t:csep}
	Let $c=\max\A$, and $x\in\A^*$ a singular word. Then $c$ is separating for
	$x$ if and only if
	\[|x|_c\geq\sum_{c'\in\A\setminus\{c\}}|x|_{c'}-1.\]
	When $|x|-|x|_c>1$, the above inequality is strict if and only if
	$cc\in\F(x)$.
\end{lemma}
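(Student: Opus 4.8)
The plan is to mirror the proof of \Cref{t:asep}, with $c=\max\A$ in place of $a=\min\A$, the inequality suitably reversed, and \Cref{t:sep} now invoked for the letter $c$. The one genuine difference lies in the boundary behaviour. For the minimum letter, \Cref{t:a1st} guarantees that $x$ both begins and ends with $a$ regardless of multiplicities; for the maximum letter this fails, and it is exactly here that the hypothesis $|x|-|x|_c>1$ enters. So the first thing I would do is record that if $|x|-|x|_c\le 1$ then $c$ is trivially separating (a single non-$c$ letter cannot have a non-$c$ neighbour) and the claimed inequality $|x|_c\ge |x|-|x|_c-1$ holds automatically, so both assertions are vacuous or immediate; I may therefore assume $|x|-|x|_c\ge 2$ throughout. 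Under this assumption, \Cref{t:ends} shows that a singular word beginning in $c$ has the form $c^nb$ and hence has only one non-$c$ letter; combined with the reversal-invariance of singularity, it follows that $x$ must begin and end with a non-$c$ letter.

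Next I would set $m=|x|-|x|_c\ (\ge 2)$ and write $x=d_1c^{i_1}d_2c^{i_2}\cdots c^{i_{m-1}}d_m$, where $d_1,\dots,d_m$ are the non-$c$ letters and $i_1,\dots,i_{m-1}\ge 0$ count the $c$'s in the $m-1$ gaps between consecutive non-$c$ letters, so that $|x|_c=\sum_j i_j$. If $c$ is separating, then no two non-$c$ letters are adjacent, i.e.\ every $i_j\ge 1$, giving $|x|_c\ge m-1$, which is the desired inequality; moreover equality holds exactly when every $i_j=1$, that is, exactly when $cc\notin\F(x)$. This settles one implication and, simultaneously, the separating half of the strictness statement.

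For the converse I would assume $c$ is not separating, so that some factor $c'c''$ with $c',c''\neq c$ occurs; by the contrapositive of \Cref{t:sep} this forces $cc\notin\F(x)$, whence every $i_j\le 1$, while the adjacent pair of non-$c$ letters means at least one $i_j=0$. Then $|x|_c\le m-2<m-1$, so the inequality fails. Combining the two directions yields ``$c$ separating $\iff |x|_c\ge m-1$.'' The strictness statement then follows formally: if $cc\in\F(x)$ then $c$ is separating by \Cref{t:sep}, hence the inequality is strict by the equality analysis above; conversely, any instance of the inequality forces $c$ to be separating, within which strictness is equivalent to $cc\in\F(x)$. I expect the only point requiring real care to be the boundary step, namely establishing via \Cref{t:ends} and reversal-invariance that $x$ begins and ends with a non-$c$ letter, since this is precisely what the hypothesis $|x|-|x|_c>1$ supplies and what keeps the counting of gaps clean; the remainder is the same bookkeeping as in \Cref{t:asep}.
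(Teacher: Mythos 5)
Your proof is correct and is essentially the argument the paper intends: the paper gives no explicit proof of \Cref{t:csep}, stating only that it ``can be proved in a similar way'' to \Cref{t:asep}, and your write-up is precisely that adaptation (counting via \Cref{t:sep}), with the right handling of the one genuinely new point, namely using \Cref{t:ends} and reversal-invariance in place of \Cref{t:a1st} to control the endpoints when $|x|-|x|_c\geq 2$.
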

%
%

For a letter $d\in \A$, let $\lambda_d$ and $\rho_d$ denote the morphisms
defined by
$\lambda_d(d')=dd'$, $\rho_d(d')=d'd$ for letters $d'\neq d$, and
$\lambda_d(d)=\rho_d(d)=d$.
The following properties are easy to verify and will be useful in the sequel:
for all $d\in \A$ and $x\in \A^*$,
\begin{equation}
    \label{e:lamrho}
    d\overline{\lambda_d(x)}=\lambda_d(\overline x)d\quad\text{ and }\quad
    d\rho_d(x)=\lambda_d(x)d.
\end{equation}

\begin{lemma}
	\label{t:monoa}
	Let $a\in\A$ and $x,y\in \A^*$. Then
	$x<y\iff\lambda_a(x)a<\lambda_a(y)a$.
\end{lemma}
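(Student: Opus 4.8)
The plan is to reduce the two-sided statement to a single monotonicity property of the morphism $\rho_a$, which is more transparent. Using the identity $a\rho_a(x)=\lambda_a(x)a$ supplied by \eqref{e:lamrho}, the inequality $\lambda_a(x)a<\lambda_a(y)a$ is literally the same as $a\rho_a(x)<a\rho_a(y)$. Both of these words begin with the letter $a$, and cancelling a common initial letter both preserves and reflects the order $<$: if the two words are $cp$ and $cq$, then $q$ is a proper prefix of $p$ iff $cq$ is a proper prefix of $cp$, and the first position where $p,q$ disagree is the same (shifted by one) as the first disagreement of $cp,cq$, with the same letters. Hence it suffices to prove
\[x<y\iff\rho_a(x)<\rho_a(y).\]

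For the forward implication I would show that $\rho_a$ is \emph{strictly} order-preserving, by the case analysis dictated by the definition of $<$. If $y$ is a proper prefix of $x$, write $x=yz$ with $z\in\A^+$; then $\rho_a(x)=\rho_a(y)\rho_a(z)$ with $\rho_a(z)\in\A^+$ (each letter has nonempty image), so $\rho_a(y)$ is a proper prefix of $\rho_a(x)$ and therefore $\rho_a(x)<\rho_a(y)$, consistently with $x<y$. Otherwise $x=ucx'$ and $y=udy'$ with $u\in\A^*$, $c,d\in\A$ and $c<d$; here $\rho_a(x)$ and $\rho_a(y)$ share the common prefix $\rho_a(u)$, after which they continue with $\rho_a(c)\cdots$ and $\rho_a(d)\cdots$. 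The crucial observation is that $\rho_a(e)$ begins with the letter $e$ for \emph{every} $e\in\A$ (it equals $e$ when $e=a$ and $ea$ otherwise); thus the two continuations begin with the distinct letters $c<d$, giving $\rho_a(x)<\rho_a(y)$. Note that this case covers $a$ being neither the least nor the greatest letter, since only the relation $c<d$ is used.

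The reverse implication then comes essentially for free from the fact that $<$ is a strict total order: the case analysis above establishes strict monotonicity, so if $\rho_a(x)<\rho_a(y)$ we can have neither $x=y$ (which would force equal images) nor $y<x$ (which by the forward direction would give $\rho_a(y)<\rho_a(x)$, contradicting antisymmetry), and trichotomy leaves only $x<y$. The one point requiring genuine care — and the only real obstacle — is the paper's reversed convention for proper prefixes, under which a word is declared smaller than each of its prefixes; this is precisely what makes the prefix case go through with the inequality oriented the correct way, and it must be tracked consistently both when cancelling the leading $a$ and when treating the factorization $x=yz$.
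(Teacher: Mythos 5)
Your proof is correct, but it is organized differently from the paper's. The paper argues directly on the words $\lambda_a(x)a$ and $\lambda_a(y)a$, proving both implications by a case analysis on the two branches of the order (proper prefix versus first disagreement); in particular, for the converse it has to identify the longest common prefix of $\lambda_a(x)a$ and $\lambda_a(y)a$ and argue that it necessarily has the form $\lambda_a(u)a$, and in the prefix case it explicitly refactors $\lambda_a(x)a=\lambda_a(y)aWa$ to recover $x=yw$. You instead use the conjugation identity $a\rho_a(x)=\lambda_a(x)a$ from \cref{e:lamrho} to cancel the leading letter and reduce everything to the strict monotonicity of $\rho_a$, where the key fact ``$\rho_a(e)$ begins with $e$ for every letter $e$'' plays exactly the role that ``$\lambda_a(x)a$ begins with $\lambda_a(u)ab$'' plays in the paper; you then obtain the reverse implication for free from the fact that $<$ is a strict total order on $\A^*$ (trichotomy plus the already-established forward direction), which entirely sidesteps the paper's structural analysis of common prefixes in the image. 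Both routes are sound; yours is somewhat more economical and less delicate, since the only nontrivial verifications are the two forward cases and the (easily checked) fact that prepending a fixed letter preserves and reflects $<$ under the reversed prefix convention, whereas the paper's version yields explicit information about how prefixes of images decompose, information it happens not to need elsewhere.
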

\begin{proof}
	If $y$ is a proper prefix of $x$, then clearly $\lambda_a(y)a$ is a
	prefix of $\lambda_a(x)$, and hence a proper prefix of $\lambda_a(x)a$.
	Conversely, if $\lambda_a(x)a=\lambda_a(y)aWa$ for some $W\in \A^*$, then there exists $w\in \A^+$ such that  $aW=\lambda_a(w)$ and $x=yw$.
	
	If $x=ubv$ and $y=ucw$ for some $u,v,w\in \A^*$ and $b,c\in \A$ such that
	$b<c$, then in all cases
	$\lambda_a(x)a$ begins with $\lambda_a(u)ab$ and
	$\lambda_a(y)a$ begins with $\lambda_a(u)ac$. Conversely, assume
	$\lambda_a(x)a<\lambda_a(y)a$, the latter not being a prefix of the
	former. Their longest common prefix is then necessarily
	$\lambda_a(u)a$ for a suitable $u\in \A^*$. Hence there exist letters
	$b<c$ such that $\lambda_a(u)ab$ and $\lambda_a(u)ac$ are respectively
	prefixes of $\lambda_a(x)a$ and $\lambda_a(y)a$; therefore $x<y$, as $x$
	begins with $ub$ and $y$ with $uc$.
\end{proof}
For $c=\max \A$ we need the following slight variation:

\begin{lemma}
	\label{t:monoc}
	Let $c=\max \A$ and $x,y\in \A^*$. Then
	$x<y\iff\lambda_c(x)<\lambda_c(y)$.
\end{lemma}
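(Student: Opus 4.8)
The plan is to prove the nontrivial implication $x<y\Rightarrow\lambda_c(x)<\lambda_c(y)$ by hand and then obtain the converse for free. Unlike in \Cref{t:monoa}, no trailing letter needs to be appended; the reason, as will become clear, is precisely that $c=\max\A$. Recall that whenever $x<y$, either $y$ is a proper prefix of $x$, or $x$ and $y$ share a common prefix $u\in\A^*$ and then differ, say $x=u\alpha x'$ and $y=u\beta y'$ with $\alpha,\beta\in\A$ and $\alpha<\beta$.

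For the forward direction I would treat these two cases. If $y$ is a proper prefix of $x$, write $x=yz$ with $z\in\A^+$; since $\lambda_c$ is a morphism sending nonempty words to nonempty words, $\lambda_c(y)$ is a proper prefix of $\lambda_c(x)$, so $\lambda_c(x)<\lambda_c(y)$ by the prefix convention. In the mismatch case, factor $\lambda_c(x)=\lambda_c(u)\lambda_c(\alpha)\lambda_c(x')$ and likewise for $y$. Since $\alpha<\beta\le c$ forces $\alpha\ne c$, we have $\lambda_c(\alpha)=c\alpha$, so both images begin with $\lambda_c(u)c$, after which $\lambda_c(x)$ continues with the letter $\alpha$.

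The one point requiring care—and the only genuine obstacle—is that $\lambda_c$ is not a prefix code: $c=\lambda_c(c)$ is a prefix of $c\beta=\lambda_c(\beta)$ for $\beta\ne c$, so string-prefix relations among images need not mirror those among preimages. This is exactly the situation when $\beta=c$, where $\lambda_c(\beta)$ collapses to a lone $c$. I would resolve it by inspecting what follows the shared prefix $\lambda_c(u)c$ in $\lambda_c(y)$: if $\beta\ne c$ the next letter is $\beta>\alpha$; if $\beta=c$ and $y'=\eps$ then $\lambda_c(y)=\lambda_c(u)c$ is a proper prefix of $\lambda_c(x)$; and if $\beta=c$ with $y'\ne\eps$ then $\lambda_c(y')$ begins with $c$, so the next letter in $\lambda_c(y)$ is $c$. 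In each case the competing letter in $\lambda_c(y)$ is strictly larger than the $\alpha$ appearing in $\lambda_c(x)$—using $\alpha<c$ in the collapsed subcase—or else $\lambda_c(y)$ is a proper prefix of $\lambda_c(x)$; either way $\lambda_c(x)<\lambda_c(y)$. Maximality of $c$ is essential precisely here.

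Finally, the converse follows without further computation. The lexicographic order on $\A^*$ is total and antisymmetric, and $\lambda_c$ is a well-defined map, so if $\lambda_c(x)<\lambda_c(y)$ then $x\ne y$; were $y<x$, the forward implication would give $\lambda_c(y)<\lambda_c(x)$, a contradiction. Hence $x<y$. Thus I would present the forward implication in full and invoke totality to close the backward one.
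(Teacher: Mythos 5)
Your proof is correct. The forward implication is handled exactly as in the paper: the proper-prefix case passes through $\lambda_c$ directly, and in the mismatch case $x=u\alpha x'$, $y=u\beta y'$ with $\alpha<\beta$ you correctly isolate the only delicate subcase, namely $\beta=c$, where $\lambda_c(\beta)$ collapses to a lone $c$; your three-way split ($\beta\neq c$; $\beta=c$ with $y'=\eps$, giving the proper-prefix conclusion; $\beta=c$ with $y'\neq\eps$, where $\lambda_c(y')$ necessarily begins with $c>\alpha$) covers everything, and it is indeed here that maximality of $c$ is used. Where you genuinely diverge from the paper is the converse: the paper proves $\lambda_c(x)<\lambda_c(y)\Rightarrow x<y$ by a second explicit case analysis (showing that if $\lambda_c(y)$ is a proper prefix of $\lambda_c(x)$ then either $y$ is a proper prefix of $x$ or $y=uc$, $x=uax'$ with $a<c$; and that a mismatch in the images forces the longest common prefix to end in $c$, hence to be $\lambda_c(u)c$). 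You instead get the converse for free from the fact that $<$ is a strict total order on $\A^*$ (irreflexive, asymmetric, trichotomous) together with the already-proved forward implication. Both arguments are valid; yours is shorter and less error-prone, while the paper's explicit converse has the minor virtue of exhibiting where the preimages must differ, information that is not needed in the subsequent application in \Cref{t:dCl}. One cosmetic remark: you should say the strict order is \emph{asymmetric} rather than \emph{antisymmetric}, and the totality of $<$ on $\A^*$ (any two distinct finite words are comparable) deserves the one-line justification that two distinct words either have a first differing position or one is a proper prefix of the other.
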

\begin{proof}
	If $y$ is a proper prefix of $x$, then $\lambda_c(y)$ is a proper prefix
	of $\lambda_c(x)$. Conversely, if $\lambda_c(y)W=\lambda_c(x)$ for some
	nonempty $W$, then either $y$ is a proper prefix of $x$, or
	$y=uc$ and $x=uax'$ for some $u,x'\in \A^*$ and a letter $a<c$.
	Hence $x<y$ anyway.
	
	If $x=uav$ and $y=ubw$ for suitable $u,v,w\in \A^*$ and $a,b\in \A$ with
	$a<b$, then either $c\notin\{a,b\}$ or $b=c$. In both cases
	$\lambda_c(x)$ begins with $\lambda_c(u)ca$. As for $\lambda_c(y)$, it
	equals $\lambda_c(u)c$ if $b=c$ and $w=\eps$, otherwise it has
	$\lambda_c(u)cb$ as a prefix.
	Conversely, if $\lambda_c(x)<\lambda_c(y)$ and their longest
	common prefix is shorter than $\lambda_c(y)$, then it necessarily ends in
	$c$. In other words, there exist $u\in \A^*$ and letters $a<b$ such that
	$\lambda_c(u)ca$ and $\lambda_c(u)cb$ are prefixes of $\lambda_c(x)$ and
	$\lambda_c(y)$ respectively. Again, it follows that $x$ begins with
	$ua$, and $y$ with $ub$.
\end{proof}

\begin{lemma}
    \label{t:dAl}
    Let $a=\min \A$ and $x\in \A^*$. Then $x$ is singular if and only if  $\lambda_a(x)a$ is singular.
\end{lemma}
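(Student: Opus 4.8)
The plan is to recast the statement as the equivalence ``$x$ is reversible if and only if $X:=\lambda_a(x)a$ is reversible,'' and to prove the two implications separately. Two structural facts drive everything. First, the operation $x\mapsto\lambda_a(x)a$ commutes with reversal: by \eqref{e:lamrho}, $\overline X=a\,\overline{\lambda_a(x)}=\lambda_a(\overline x)a$. Since reversibility is preserved by reversal and comes in two mirror-symmetric forms, this lets me normalize every reversible factorization to the form ``$v<\overline v$ and $u<w$''. Second, $X$ has the block form $X=C_1\cdots C_n a$ with $C_i=a$ if $x_i=a$ and $C_i=ax_i$ otherwise; thus $X$ begins and ends with $a$, and every non-$a$ letter of $X$ is immediately preceded by $a$.

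For the implication $x$ reversible $\Rightarrow X$ reversible, I would start from a reversible factorization $x=\overline u v w$ with $v<\overline v$ and $u<w$ and exhibit the factorization $X=\overline U V W$ given by $U=\rho_a(u)$, $V=\lambda_a(v)a$, $W=\rho_a(w)$ (here $V$ absorbs the $a$ that necessarily follows $\lambda_a(v)$ inside $X$; one checks $\overline U V W=X$ using $\lambda_a(w)a=a\rho_a(w)$). Then \eqref{e:lamrho} gives $\overline V=\lambda_a(\overline v)a$, so \Cref{t:monoa} yields $V<\overline V\iff v<\overline v$; and since $aU=\lambda_a(u)a$ and $aW=\lambda_a(w)a$, prepending the letter $a$ (an order-preserving operation) together with \Cref{t:monoa} gives $U<W\iff u<w$. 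The nondegeneracy conditions $V\neq\overline V$ and $U\neq W$ follow from $v\neq\overline v$ and $u\neq w$, so $X$ is reversible.

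For the converse I would invoke the reduction from the Preliminaries to pick a reversible factorization $X=\overline U V W$ in which $V$ begins and ends in distinct letters $p\neq q$, normalized so that $V<\overline V$ and $U<W$; comparing the two equal-length words $V,\overline V$ forces $p<q$. Since $a=\min\A$, $p<q$ already excludes $q=a$, so $V$ ends at a non-$a$ letter, which by the block structure is the end of a block; thus the right cut is automatically block-aligned, and the argument splits according to the left cut. If $p\neq a$ (whence $q\neq a$ too), then $V$ begins at the non-$a$ letter of a block, and reading off the blocks recovers $u,v,w$ with $U=\lambda_a(u)a$, $W=\lambda_a(w)a$, and $v=x_s\cdots x_t$ beginning in $p$ and ending in $q$. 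Then \Cref{t:monoa} converts $U<W$ into $u<w$, while $p<q$ gives $v<\overline v$, so $x=\overline u v w$ is a reversible factorization of $x$.

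The delicate case, and the main obstacle, is $p=a$. Here $V=\lambda_a(v)$ starts with the leading $a$ of a block, so $V<\overline V$ holds for free and conveys nothing about $v$ versus $\overline v$; the clean correspondence of the previous case breaks. The remedy is to extract the content of the hypothesis $U<W$ directly: writing $U=\rho_a(u)$ and $W=a\rho_a(w)$, the inequality $U<W$ forces the first letter of $U$ --- equivalently the letter of $x$ immediately preceding $v$ --- to equal $a$. Folding that $a$ into the middle factor yields a new factorization $x=\overline{u''}(av)w$ whose middle factor $av$ begins with $a$ and ends with $q\neq a$, hence is strictly smaller than its reversal, while $U<W$ simplifies, via \eqref{e:lamrho} and \Cref{t:monoa}, to exactly $u''<w$. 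This exhibits $x$ as reversible and completes the equivalence. The few boundary situations in which $U$ or $W$ is empty are disposed of directly and cause no difficulty.
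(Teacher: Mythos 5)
Your proof is correct and follows essentially the same route as the paper's: both pass to the equivalence of reversibility, transfer factorizations and inequalities through $\lambda_a$ via \eqref{e:lamrho} and \Cref{t:monoa}, and show that any reversible factorization of $\lambda_a(x)a$ can be aligned with the block structure coming from a factorization of $x$. The only difference is organizational --- the paper normalizes (by reversal) so that the middle factor $V$ does not begin with $a$ and then shows the cut points are forced, whereas you normalize the direction of the inequalities and handle the case where $V$ begins with $a$ by folding the preceding occurrence of $a$ in $x$ into the middle factor --- and both settle the same point.
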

\begin{proof}
    Equivalently, it suffices to show that $x$ is reversible 
    if and only if $\lambda_a(x)a$ is reversible. Fix a factorization $x=\overline{u}vw$ with $v\neq \overline v$ and $u\neq w$; this
    is equivalent to
    \begin{equation}
	    \label{e:lama}
    	\lambda_a(x)a
	    =\overline{\lambda_a(u)a}\cdot a^{-1}\lambda_a(v)\cdot\lambda_a(w)a
    \end{equation}
	since \cref{e:lamrho} implies
    $\lambda_a(\overline{u})a=a\overline{\lambda_a(u)}
    =\overline{\lambda_a(u)a}$. Again by \cref{e:lamrho} we have
	$\overline{a^{-1}\lambda_a(v)}=a^{-1}\lambda_a(\overline{v})$. Since
	$|a^{-1}\lambda_a(v)|=|a^{-1}\lambda_a(\overline{v})|$, we obtain
	\[a^{-1}\lambda_a(v)<\overline{a^{-1}\lambda_a(v)}
	\iff\lambda_a(v)a<\lambda_a(\overline{v})a\]
	as well as the same equivalence for the opposite inequality. 
	By applying
	\Cref{t:monoa} to $u,w$ and to $v,\overline{v}$, it follows that
	$x=\overline{u}vw$
	is a reversible factorization if and only the one in \cref{e:lama} is
	reversible too.
	
	It remains to show that if $\lambda_a(x)a$ has a reversible
	factorization, then it has one of the form shown in \cref{e:lama}. Let
	then $\lambda_a(x)a=\overline{U}VW$ be a reversible factorization. We may
	assume that the first letter of $V$ differs from its last; in particular,
	by symmetry we may assume that the first letter of $V$ is not $a$, in which case $U$
	must begin with $a.$  Now, $W$ cannot be empty, otherwise $V$
	would end in $a$ and the factorization would not be reversible, as
	$\overline{V}<V$ and $U<W$; for the same reason, $W$ must begin with $a$.
	It follows that there exist $u,v,w\in\A^*$ such that the factorization
	$\lambda_a(x)a=\overline{U}VW$ coincides with the one in \cref{e:lama}, as
	desired.	
\end{proof}

\begin{lemma}
	\label{t:dCl}
	Let $c=\max\A$ and $x\in\A^+$. Then $x$ is singular if and only if
	$\rho_c(x)c^{-1}$ is singular.
\end{lemma}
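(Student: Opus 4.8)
The plan is to mirror the proof of \Cref{t:dAl}. As there, it suffices to prove that $x$ is reversible if and only if $\rho_c(x)c^{-1}$ is reversible. I would first rewrite the target word: by \cref{e:lamrho} one has $c\rho_c(x)=\lambda_c(x)c$, hence $\rho_c(x)c^{-1}=c^{-1}\lambda_c(x)$, and I would carry out the whole argument through $\lambda_c$, for which \Cref{t:monoc} supplies the required monotonicity.

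For the forward implication, I would start from a factorization $x=\overline u v\eta$ with $v\neq\overline v$ and $u\neq\eta$ and use \cref{e:lamrho} to rewrite it as
\[\rho_c(x)c^{-1}=\overline{\lambda_c(u)}\cdot\bigl(c^{-1}\lambda_c(v)\bigr)\cdot\lambda_c(\eta),\]
a factorization of the same shape $\overline UVW$. The defining inequalities then transfer: since $\lambda_c$ is injective and, by \Cref{t:monoc}, order preserving, one has $u<\eta\iff\lambda_c(u)<\lambda_c(\eta)$ (and likewise with the inequalities reversed), which compares $U$ and $W$; and since $\overline{c^{-1}\lambda_c(v)}=c^{-1}\lambda_c(\overline v)$, again by \cref{e:lamrho}, while $\lambda_c(v)$ and $\lambda_c(\overline v)$ have equal length and common first letter $c$, stripping that $c$ preserves the order, so that $v<\overline v\iff\lambda_c(v)<\lambda_c(\overline v)\iff c^{-1}\lambda_c(v)<\overline{c^{-1}\lambda_c(v)}$. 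Hence the displayed factorization is reversible exactly when $x=\overline u v\eta$ is, which proves that $x$ reversible implies $\rho_c(x)c^{-1}$ reversible.

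For the converse I would begin with an arbitrary reversible factorization $\rho_c(x)c^{-1}=\overline UVW$ and, as permitted by the reduction observed in the preliminaries, assume that $V$ begins and ends in distinct letters. The crucial structural fact is that $c=\max\A$ is separating for $\rho_c(x)c^{-1}$: since $\rho_c$ inserts a $c$ after every non-$c$ letter, the word contains no factor consisting of two non-$c$ letters. I would use this to force the two cut points into the shape of the displayed factorization, namely that $\overline U$ ends in $c$ or is empty and that $W$ begins with $c$ or is empty. Indeed, if $\overline U$ ended in a non-$c$ letter, then by separation $V$ would have to begin with $c=\max\A$, so that $\overline V<V$; as $U$ then begins with a non-$c$ letter while $W$ begins with $c$ or is empty, the prefix convention together with $c=\max\A$ gives $U<W$, and $\overline V<V$ with $U<W$ is a singular---not reversible---configuration, a contradiction. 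A symmetric argument, now forcing $V$ to end in $c$, rules out $W$ beginning in a non-$c$ letter. Once both cuts are aligned, one reads off $u,v,\eta$ with $\overline U=\overline{\lambda_c(u)}$, $V=c^{-1}\lambda_c(v)$, $W=\lambda_c(\eta)$ and $x=\overline u v\eta$, and running the transfer of inequalities backwards shows that $x$ is reversible.

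The forward implication is routine; the real work, and the step I expect to be the main obstacle, is the alignment argument in the converse. Unlike $\lambda_a(x)a$ in \Cref{t:dAl}, which is anchored by the minimal letter at both ends, the word $\rho_c(x)c^{-1}$ need not begin or end with $c$, so one cannot simply read off that the outer factors start or end with the extremal letter. What replaces that min-anchoring is precisely the separating property of $c$ together with the fact that a factor beginning with $\max\A$---or an empty outer factor, through the prefix convention---pins down the sign of the relevant lexicographic comparison, turning each misaligned cut into a singular configuration.
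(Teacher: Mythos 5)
Your proof is correct and follows essentially the same route as the paper's: both reduce to reversibility, transfer the forward direction through \cref{e:lamrho} and \Cref{t:monoc}, and handle the converse by showing that a misaligned cut forces $V$ to begin or end in $c=\max\A$, which combined with the prefix convention yields a singular configuration. The only (cosmetic) difference is that the paper first normalizes by symmetry so that $V$ does not begin with $c$, whereas you run the two alignment contradictions directly.
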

\begin{proof}
	Once again, we prove the contrapositive, i.e., $x$ is reversible
	if and only if $\rho_c(x)c^{-1}$ is reversible. Let
	$x=\overline{u}vw$ for some $u,v,w\in\A^*$ with $v\neq\eps$, which is
	equivalent to
	\begin{equation}
		\label{e:lamc}
		\rho_c(x)c^{-1}=c^{-1}\lambda_c(x)
		=\overline{\lambda_c(u)}\cdot c^{-1}\lambda_c(v)\cdot\lambda_c(w)
	\end{equation}
	in view of \cref{e:lamrho}, which also implies
	$\overline{c^{-1}\lambda_c(v)}=c^{-1}\lambda_c(\overline{v})$. Since
	clearly $c^{-1}\lambda_c(v)$ is less (resp.~greater) than
	$c^{-1}\lambda_c(\overline{v})$ if and only if the same relation holds
	between $\lambda_c(v)$ and $\lambda_c(\overline{v})$, applying
	\Cref{t:monoc} to the pairs $u,w$ and $v,\overline{v}$ we obtain that
	$x=\overline{u}vw$ is reversible if and only if so is the factorization in
	\cref{e:lamc}.
	
	It remains to show that if $\rho_c(x)c^{-1}$ has a reversible
	factorization, then it has one like in \cref{e:lamc}. Indeed, if
	$\rho_c(x)c^{-1}=\overline{U}VW$ is reversible, we may assume that $V$
	begins and ends with distinct letters, and by symmetry that $V$ does not
	begin with $c.$ It follows that $U$ is either empty or begins with $c$. Moreover,
	if $W$ is nonempty then it also must begin with $c$, since otherwise $V$
	would end in $c$, yielding $W<U$ and $V<\overline{V},$ a contradiction. It follows that
	there exist $u,v,w\in\A^*$ such that $\overline{U}\cdot V\cdot W$
	coincides with the factorization in \cref{e:lamc}.
\end{proof}

Since Christoffel words are exactly the images of $a,b$ under compositions of
$\lambda_a$ and $\rho_b$ (see for instance~\cite{BLRS,dadl2}), an alternative
proof of \Cref{t:Chr} can easily be obtained from \Cref{t:dAl,t:dCl}.

We now focus on a ternary alphabet. In the following, we let $\A=\{a,b,c\}$
with $a<b<c$.

\begin{rem}
    \label{t:nospecial}
    Let $x\in\A^*$. If $ab,cb\in\F(x)$, or if $ba,bc\in\F(x)$, then $x$ is not
    singular. Indeed, $\overline{u}a\cdot bvc\cdot bw$, $\overline{u}c\cdot bva\cdot bw$
    and their reverses are all reversible factorizations.%
\end{rem}
\begin{lemma}
    \label{t:ab+bc}
    Let $x\in\A^*$ be a singular word. If $ab$ and $bc$ (resp.~$cb$ and $ba$)
    occur in $x$, then $x=ab^n(ca)^m$ (resp.~$\overline{x}=ab^n(ca)^m$) for
    some $n,m\geq 1$.
\end{lemma}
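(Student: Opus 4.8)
The plan is to show that a singular word $x$ containing both $ab$ and $bc$ as factors is very rigidly constrained. First I would use \Cref{t:nospecial}: since $x$ is singular and contains $ab$, it cannot contain $cb$, and since it contains $bc$, it cannot contain $ba$. Thus every occurrence of $b$ in $x$ is immediately preceded by $a$ (never by $c$ or $b$, since $bb$ would also force a forbidden pattern via \Cref{t:sep} applied to $a=\min\A$ — or more directly, $bb$ together with $ab$ and $bc$ would create reversible factorizations) and immediately followed by $c$ (never by $a$ or $b$). So every $b$ sits inside a block $abc$. This already tells us that the letter $b$ is quite controlled, and I expect the maximal runs of $b$ to collapse: if $b^k$ were a factor with $k\geq 2$, the middle $b$'s would be both preceded and followed by $b$, contradicting that each $b$ is preceded by $a$ and followed by $c$. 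Hence $b$ appears only as isolated occurrences, each of the form $abc$.

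Next I would apply \Cref{t:a1st} with $a=\min\A$: the word $x$ must begin or end in $a$, and (up to replacing $x$ by $\overline x$) we may assume $x=ax'$ with $x'$ ending in $\min x' $. Combining this with the block structure, I would argue that after the initial $a$, the word is forced to alternate through the pattern dictated by the forbidden factors. Specifically, reading left to right from the first $a$: an $a$ may be followed by $b$ or by $c$ (it cannot be followed by $a$ unless $aa$ occurs, which by \Cref{t:sep} would force $a$ separating, conflicting with the presence of $bc$). After $ab$ we are forced into $abc$, and after $c$ the next letter is constrained by the absence of $cb$ (so $c$ is followed by $a$ or $c$). The key observation is that the presence of a genuine $b$-block $abc$ together with the singularity condition pins down the global shape: $x$ starts $ab^{n}c$ for some $n\geq 1$ (here the $b^n$ must actually be a single $b$, i.e.\ $n=1$, by the isolation argument, but the lemma statement writes $b^n$, so I would instead track the run of $b$ following the initial $a$ and show the remainder is a concatenation of $ca$ blocks).

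More carefully, I would show that after the prefix $ab^n$ (the initial maximal $a$-then-$b$ segment, where actually the $a$-run has length one by \Cref{t:sep} since $bc$ rules out $a$ separating), the rest of $x$ is a power $(ca)^m$. For this I would examine the suffix: by \Cref{t:a1st} the word $x=ax'$ forces $x'$ to end in $a=\min x'$, so $x$ ends in $a$; and the forbidden factors $cb,ba$ mean that once we pass the $b$'s, the only admissible two-letter factors among the remaining letters are $ca$ and $ac$ and $cc$ and $aa$, but $aa$ and $cc$ are excluded by \Cref{t:sep} (as $a$ and $c$ cannot be separating when both $ab$ and $bc$ occur). This leaves strict alternation $caca\cdots$, giving the tail $(ca)^m$. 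I would then verify that no $b$ can reappear after the first block without reintroducing a forbidden factor or violating singularity, pinning $x=ab^n(ca)^m$ with $n,m\geq 1$. The symmetric statement for $cb,ba$ follows by applying the result to $\overline x$, since reversal swaps the roles of the forbidden pairs and preserves singularity.

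The main obstacle I anticipate is the bookkeeping around the separating-letter lemmas (\Cref{t:sep,t:asep,t:csep}): I must carefully rule out the runs $aa$, $bb$, and $cc$ using the fact that neither $a$ nor $c$ can be separating once both $ab$ and $bc$ occur, and then convert the resulting "each $b$ is an isolated $abc$-block, with everything else an alternation of $a$ and $c$" into the clean algebraic form $ab^n(ca)^m$. The delicate point is reconciling the literal $b^n$ in the statement with the isolation of $b$'s; I expect that the intended reading allows the initial $b$-run while the interior forces single $b$'s, so the cleanest route is to locate the \emph{first} occurrence of $b$, use singularity to show all $b$'s cluster into one initial run $ab^n$, and then show the remainder admits only the alternation $(ca)^m$.
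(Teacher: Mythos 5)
Your opening moves match the paper's: from \Cref{t:nospecial} you correctly exclude $cb$ and $ba$, and via the separating-letter lemmas you correctly exclude $aa$ and $cc$ (since $a$ separating would forbid $bc$, and $c$ separating would forbid $ab$). But the argument then breaks on a false claim: you assert that $bb$ cannot occur, so that every $b$ sits in an isolated block $abc$. This contradicts the very statement you are proving, which allows $n\geq 2$; for instance $abbca=ab^2(ca)$ is singular (it is the word guaranteed by \Cref{t:delta0} for the Parikh vector $(2,2,1)$) and contains $bb$, $ab$ and $bc$ simultaneously. Your two justifications both fail: \Cref{t:sep} applies only to $d\in\{\min\A,\max\A\}$, so it says nothing about $bb$; and $bb$ together with $ab$ and $bc$ does not produce a reversible factorization, as the example shows. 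You notice this tension in your last paragraph but leave it unresolved, falling back on the unproven assertion that ``singularity shows all $b$'s cluster into one initial run.''

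That assertion is exactly the missing content, and it cannot be extracted from forbidden $2$-factors alone: the word $abcabca$ avoids all of $aa,cc,cb,ba$ and contains $ab$ and $bc$, yet is not of the form $ab^n(ca)^m$; ruling it out requires a genuine reversibility argument. The paper supplies this in two steps that your proposal lacks. First, it looks at the factor $z$ following the \emph{last} occurrence of $c$, uses \Cref{t:a1st,t:ends} and the forbidden factors to show $z=a$, and then reads the word backwards (no $cc$, no $aa$, no $ba$, no $cb$) to conclude that $ab^n(ca)^m$ is a \emph{suffix} of $x$, i.e.\ $x=w\,ab^n(ca)^m$. Second, it kills the prefix: if $w\neq\eps$, then $x=\overline{\overline{w}}\cdot ab^n(ca)^m\cdot\eps$ is a reversible factorization, since $v=ab^n(ca)^m$ satisfies $v<\overline v$ (it begins $ab$ while $\overline v$ begins $ac$) and $\overline w<\eps$ under the paper's prefix convention. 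Without these two steps --- or some substitute for them --- your argument does not reach the conclusion.
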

\begin{proof}
Suppose $ab, bc\in L(x).$ As $ab\in L(x)$. it follows that $\{cb,cc\}\cap L(x)=\emptyset$. Similarly, $bc\in L(x)$ implies $\{aa, ba\}\cap L(x)=\emptyset$.
Write $x=\overline u\cdot c \cdot z$  where $z$ follows the last occurrence of $c$ in $x$. Then $|u|_b>0$ and by \Cref{t:a1st}, $z$ is nonempty and hence begins with $a$. We claim that $|z|_b=0$; indeed, $x$ cannot end in $b$ by \Cref{t:ends}, and the last $b$ in $x$ can only be followed by $c$.
It follows that $z=a$ and hence $ca$ is a suffix of $x$. Thus $x=wab^n(ca)^m$ for some $n,m\geq 1$, which implies $w=\eps$ otherwise the factorization
$w\cdot ab^n(ca)^m\cdot\eps$ would be reversible.
\end{proof}

\begin{prop}
    \label{t:delta0}
    Let $x\in\A^*$ be a singular word. If $|x|_a=|x|_c+1$, then $x=ab^n(ca)^m$
    or $\overline{x}=ab^n(ca)^m$ for some $n,m\geq 0$.
\end{prop}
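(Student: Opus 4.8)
The plan is to combine the Parikh constraint $|x|_a=|x|_c+1$ with the separation lemmas to determine which bigrams can occur, and then to analyse how the blocks of $b$'s sit inside $x$. First, if $|x|_a=1$ then $|x|_c=0$, so $x$ is a word over $\{a,b\}$ containing a single $a$; by \Cref{t:a1st} it must be $ab^n$ or $b^na=\overline{ab^n}$, which has the required form with $m=0$. Assume henceforth $|x|_a\geq 2$, so that $|x|_c\geq 1$. By \Cref{t:a1st} (applied at both ends, using that $a$ still occurs after deleting one boundary $a$, so that its minimum stays $a$) the word $x$ begins and ends with $a$. Moreover, substituting $|x|_a=|x|_c+1$ into \Cref{t:asep,t:csep} shows that neither of the strict inequalities appearing there can hold, so neither $aa$ nor $cc$ is a factor of $x$.

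Once $aa,cc\notin L(x)$, the occurrences of $a$ and of $c$ are isolated, while the $b$'s group into maximal runs. If $|x|_b=0$, then $x\in\{a,c\}^+$ begins and ends with $a$ and has no $aa$ or $cc$, hence alternates as $x=a(ca)^m$ with $m=|x|_c$, and we are done. So suppose $|x|_b\geq 1$. Since $x$ begins and ends with $a$, every maximal run $b^k$ is internal, preceded by some $p\in\{a,c\}$ and followed by some $s\in\{a,c\}$. By \Cref{t:nospecial}, $ab$ and $cb$ cannot both occur, and $ba$ and $bc$ cannot both occur; hence $p$ is the same for all runs, and so is $s$. This leaves four cases for the pair $(p,s)$.

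If $(p,s)=(a,c)$, then $ab,bc\in L(x)$ and \Cref{t:ab+bc} gives $x=ab^n(ca)^m$; if $(p,s)=(c,a)$, then $cb,ba\in L(x)$ and the same lemma gives $\overline{x}=ab^n(ca)^m$. It remains to rule out $(p,s)=(a,a)$ and $(p,s)=(c,c)$, and this counting step is the crux of the argument. In the case $(a,a)$ we have $bc,cb\notin L(x)$, so together with $cc\notin L(x)$ every $c$ is flanked by $a$'s; since $a$'s are isolated, $x$ parses as $aG_1aG_2a\cdots G_ta$ with each gadget $G_i$ equal to $c$ or to a run $b^{k_i}$, whence $|x|_a=t+1$ and $|x|_c=\#\{i:G_i=c\}$. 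The equality $|x|_a=|x|_c+1$ forces $t=\#\{i:G_i=c\}$, i.e.\ every gadget is a single $c$ and $|x|_b=0$, a contradiction. The case $(c,c)$ is symmetric: now $ab,ba\notin L(x)$, so every $a$ but the last is followed by $c$, giving $|x|_a-1=|x|_c$ occurrences of $ac$; counting these by their right letter, exactly $|x|_c$ of the $c$'s are preceded by $a$, i.e.\ all of them, contradicting that the trailing $c$ of any run $cb^kc$ is preceded by $b$. Thus only $(a,c)$ and $(c,a)$ survive, proving the proposition.

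I expect the delicate part to be precisely the two counting arguments eliminating $(a,a)$ and $(c,c)$: these are where the hypothesis $|x|_a=|x|_c+1$ enters essentially, and getting the block bookkeeping exactly right is the real content, whereas the invocations of \Cref{t:asep,t:csep,t:nospecial,t:ab+bc} are routine.
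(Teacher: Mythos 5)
Your proof is correct and follows essentially the same route as the paper's: reduce to $aa,cc\notin L(x)$ via \Cref{t:asep,t:csep}, handle the degenerate cases $|x|_b=0$ and $|x|_a=1$, constrain the bigrams around the maximal $b$-runs with \Cref{t:nospecial}, use the hypothesis $|x|_a=|x|_c+1$ in a counting argument to eliminate the bad configurations, and finish with \Cref{t:ab+bc}. The only cosmetic difference is that the paper packages the counting step as the single observation that $ab$ and $ba$ both occur if and only if $cb$ and $bc$ both occur, whereas you enumerate the four predecessor/successor patterns for the $b$-runs and rule out $(a,a)$ and $(c,c)$ by explicit block decompositions.
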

\begin{proof}
    By \Cref{t:asep,t:csep}, $aa,cc\notin\F(x)$. Hence $|x|_b=0$ if and only
    if $x=a(ca)^m$ for some $m\geq 0$.
    Let then $|x|_b\geq 1$. If $|x|_a=1$, then $x=ab^n$ with $n=|x|_b$ and the
    assertion is again verified. Otherwise, $x$ begins and ends with $a$ by
    \Cref{t:a1st}.
    Since $|x|_a=|x|_c+1$ and $aa,cc\notin\F(x)$, it follows that $ab$ and
    $ba$ are both factors of $x$ if and only if so are $cb$ and $bc$. Since
    this is impossible in view of \Cref{t:nospecial}, the only remaining
    options are $ab,bc\in\F(x)$ or $cb,ba\in\F(x)$. The assertion then follows
    from \Cref{t:ab+bc}.
\end{proof}
\begin{lemma}
	\label{t:bruns}
    Let $x\in\A^*$ be a singular word such that $|x|-|x|_c>1$, and let
    $\delta=|x|_c-|x|_a+1$. The following hold:
    \begin{enumerate}
        \item If $\delta>0$ (resp.~$\delta<0$), then $ab$ and $ba$ (resp.~$bc$
        and $cb$) do not occur in $x$.
        \item If $|x|_b\geq|\delta|>0$, then $x$ has exactly $|\delta|$ runs
        of consecutive occurrences of $b$.
    \end{enumerate}
\end{lemma}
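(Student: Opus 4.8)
The plan is to treat the two items in sequence, reducing throughout to the case $\delta>0$ by exploiting the fact that singularity, the letter counts $|x|_a,|x|_b,|x|_c$, and hence $\delta$ are all invariant under reversal $x\mapsto\overline x$ (while $ab\in L(x)\iff ba\in L(\overline x)$, etc.). For item~1 with $\delta>0$ it therefore suffices to show $ab\notin L(x)$; then $ba\notin L(x)$ follows by reversal, and the case $\delta<0$ is the mirror statement proved the same way. So I would assume $ab\in L(x)$ and derive a contradiction. Since $x$ is singular, \Cref{t:nospecial} forces $cb\notin L(x)$, and \Cref{t:sep} forces $cc\notin L(x)$ (as $c$ cannot be separating when $ab$ occurs). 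I would then split on whether $bc$ occurs: if $bc\in L(x)$, then \Cref{t:ab+bc} gives $x=ab^n(ca)^m$, for which $|x|_a=m+1=|x|_c+1$ and hence $\delta=0$, a contradiction.

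If instead $bc\notin L(x)$, then $c$ is adjacent only to $a$, and since $ab,ba\notin L$ as well no $b$ touches $a$ from the $c$-side, so each maximal run of $b$ and each (necessarily isolated) $c$ is flanked by $a$'s or by a word boundary; thus $a$ acts as a \emph{separator}. Counting the $N=|x|_c+\beta$ non-$a$ runs (where $\beta\ge 1$ is the number of $b$-runs), these are separated by at least $N-1$ interior $a$'s, and \Cref{t:a1st} supplies one further occurrence of $a=\min x$ at a word end, giving $|x|_a\ge N=|x|_c+\beta\ge |x|_c+1$, i.e.\ $\delta\le 0$, again a contradiction. Hence $ab\notin L(x)$. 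The $\delta<0$ mirror (assume $cb\in L$, use \Cref{t:ab+bc} and a symmetric separator count with $c$ playing the separating role) even avoids the boundary step, since there the count directly yields $|x|_c\ge|x|_a$, contradicting $\delta<0$.

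For item~2 I would again take $\delta>0$ (the hypothesis $|\delta|>0$ rules out $\delta=0$, and reversal covers $\delta<0$). By item~1, $a$ and $b$ are never adjacent; since $b$ occurs, \Cref{t:sep} gives $aa\notin L(x)$, so every $a$ is a singleton and every $b$-run is flanked by $c$'s, making $c$ the separator. Writing $x=c^{e_0}s_1c^{e_1}\cdots s_m c^{e_m}$ with the $s_i$ the small runs (single $a$'s or blocks $b^{k_i}$), \Cref{t:ends} forbids a leading or trailing $c$ (which, together with $|x|-|x|_c>1$, would force $x=c^nb'$), so $e_0=e_m=0$ and the interior blocks satisfy $e_i\ge 1$. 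With $A$ the number of $a$-runs and $B=m-A$ the number of $b$-runs, one has $|x|_a=A$ and $|x|_c=\sum_{i=1}^{m-1}e_i\ge m-1$, and the target identity $B=\delta$ is exactly equivalent to $|x|_c=m-1$, i.e.\ to all interior $c$-blocks having length $1$ ($cc\notin L(x)$).

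The crux is ruling out $cc\in L(x)$, and this is precisely where the hypothesis $|x|_b\ge|\delta|$ enters. If $cc\in L(x)$, then $c$ is separating (\Cref{t:sep}), forcing $bb\notin L(x)$, so every $b$-run is a singleton and $|x|_b=B=m-A$; substituting into $|x|_b\ge\delta=|x|_c-A+1$ gives $|x|_c\le m-1$, hence equality and $cc\notin L(x)$ after all, a contradiction. Thus $|x|_c=m-1$ and $B=\delta$. The $\delta<0$ case is parallel but with $a=\min$ as separator: here the asymmetry between \Cref{t:ends} (which constrains $\max$) and \Cref{t:a1st} (which constrains $\min$) reverses the boundary behaviour, so \Cref{t:a1st} now forces the word to begin \emph{and} end with $a$, yielding $|x|_a=(\text{number of non-}a\text{ runs})+1$ and $B=|\delta|$. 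I expect this boundary asymmetry to be the main obstacle: the two sign cases cannot be obtained from one another by a naive reversal of the alphabet order (the prefix convention in the lexicographic order breaks that symmetry), so the $\delta>0$ and $\delta<0$ analyses must be carried out separately, and the quantitative heart of each is the same dichotomy $cc\in L\Rightarrow bb\notin L$ (resp.\ $aa\in L\Rightarrow bb\notin L$) played against the hypothesis $|x|_b\ge|\delta|$.
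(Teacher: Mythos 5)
Your proof is correct and takes essentially the same route as the paper's: use \Cref{t:nospecial} together with the separating-letter lemmas to exclude the adjacencies $ab,ba$ (for $\delta>0$) or $bc,cb$ (for $\delta<0$), then count runs using the fact that every non-separating run must be flanked by the separating letter or a word boundary; the only organizational differences are that you obtain $cc\notin L(x)$ directly from \Cref{t:sep} and take a detour through \Cref{t:ab+bc}, where the paper first splits on $|x|_b<|\delta|$ versus $|x|_b\geq|\delta|$ and invokes \Cref{t:csep}. One sentence needs repair: in the sub-case $bc\notin L(x)$ of item~1 you write ``since $ab,ba\notin L$ as well,'' yet you have assumed $ab\in L(x)$ there; the facts you actually need --- $cb,cc,bc\notin L(x)$ --- are all established at that point and do yield that every $c$ and every maximal $b$-run is flanked by $a$'s or a word boundary, so the count $|x|_a\geq|x|_c+1$ stands.
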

\begin{proof}
    Assume $\delta>0$ first, that is, $|x|_a<|x|_c+1$. Then $|x|_b<|\delta|$
    is equivalent to $|x|_c>|x|_a+|x|_b-1$, and then to $cc\in\F(x)$ by
    \Cref{t:csep}. In such a case, $c$ is separating for $x$, so that
    $ab,ba\notin\F(x)$. So assume that  $|x|_b\geq|\delta|,$ in which case
    $cc\notin\F(x)$. By \Cref{t:a1st}, $c$ is neither a prefix nor a suffix of
    $x$. By contradiction, suppose $ab\in\F(x)$, so that $cb\notin\F(x)$ in
    view of \Cref{t:nospecial}. Thus all occurrences of $c$ would be followed
    by $a$.
    As $ab\in\F(x)$, $a$ occurs in $x$ not only as a suffix; by \Cref{t:a1st},
    it must occur as a prefix too, so that $|x|_a\geq|x|_c+1$, contradicting
    $\delta>0$. Thus $ab\notin\F(x)$, and $ba\notin\F(x)$ by a symmetric
    argument. Hence, $a$ can be followed (and preceded) only by $c$
    within $x$. A simple counting argument then shows that $x$ has exactly
    $|x|_c-|x|_a+1=\delta$ runs of $b$.

    Now let $\delta<0$, so that $|x|_a>|x|_c+1$, and $x$ begins and ends with
    $a$ by \Cref{t:a1st}. By a similar argument as before, we obtain
    $cb,bc\notin\F(x)$, so that $c$ can be preceded and followed only by $a$.
    If $|x|_b\geq|\delta|$, that is, if $|x|_a\leq|x|_b+|x|_c+1$, we have
    $aa\notin\F(x)$ by \Cref{t:asep}. Therefore, we can count
    $|x|_a-|x|_c-1=|\delta|$ runs of $b$ in $x$.
\end{proof}

Let $\xi:\A^*\to\A^*$
be the map defined as follows: for all $x\in\A^*$, $\xi(x)$ is obtained by
adding an occurrence of $b$ to all existing runs of consecutive $b$ in $x$, as
well as in the middle of any occurrence of $aa$ or $cc$. Unlike the maps we
used for adding occurrences of $a$ and $c$, this $\xi$ cannot be realized via episturmian
morphisms, 
but it is \emph{sequential}, i.e., obtained as the output of a
sequential transducer (cf.~\cite{Lot}), as shown in \Cref{f:bseq}.
We remark that $\xi$ commutes with reversal, that is, for all $x\in\A^*$,
\[\overline{\xi(x)}=\xi(\overline{x}).\]
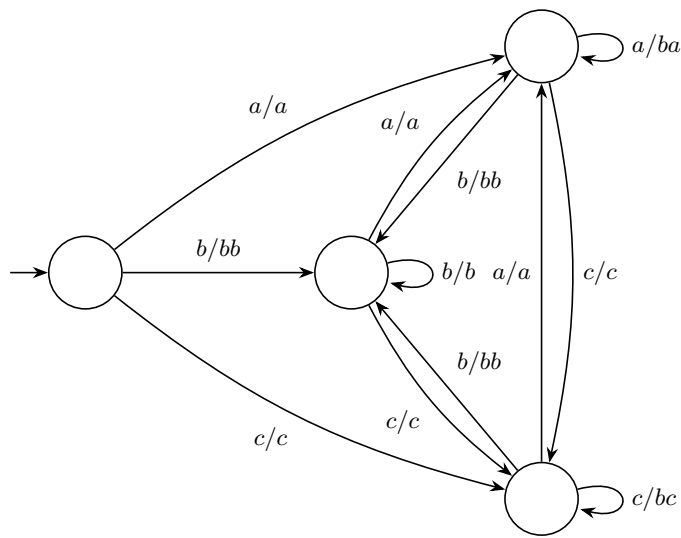
\begin{figure}[htb]
    \centering
    \begin{tikzpicture}[>=Stealth,auto,semithick,initial text=,bend angle=12,
    font=\footnotesize,initial/.style=initial left]
      \node[state,initial] (i) {};
      \node[state] (a) at (6,3) {};
      \node[state] (c) at (6,-3) {};
      \node[state] (b) at (3.5,0) {};
      \path[->] (i)  edge [bend left] node {$a/a$} (a)
                edge [bend right] node [below left] {$c/c$} (c)
                edge node {$b/bb$} (b)
             (a)  edge [bend left] node {$c/c$} (c)
                 edge [loop right] node {$a/ba$} ()
                edge node {$b/bb$} (b)
             (b) edge [loop right] node {$b/b$} ()
                 edge [bend left] node {$a/a$} (a)
                 edge [bend right] node [below left] {$c/c$} (c)
             (c) edge node {$a/a$} (a)
                 edge node [above right] {$b/bb$} (b)
                 edge [loop right] node {$c/bc$} ();
    \end{tikzpicture}
    {
    \caption{Diagram of a transducer realizing the $\xi$ map. Labels show
    input/output.}
    \label{f:bseq}}
\end{figure}

In the following results, we will often need to
add an occurrence of $b$ at the end of $\xi(x)$, when $x$ ends in $c$.
This can be expressed as $\xi(xc)c^{-1}$, i.e.,
\begin{equation}
	\label{e:xi-}
	\xi(xc)c^{-1}=\begin{cases}\xi(x)b &\text{ if $x$ ends in $c$,}\\
	\xi(x) &\text{ otherwise.}\end{cases}
\end{equation}
Similarly, $c^{-1}\xi(cx)$ adds a $b$ before a possible leading $c$, and
$c^{-1}\xi(cxc)c^{-1}$ deals with both ends (but note that
$c^{-1}\xi(c\eps c)c^{-1}=b$).
\begin{rem}
	\label{t:image}
    The mapping on $\A^*$ given by $y\mapsto c^{-1}\xi(cyc)c^{-1}$ is
    injective. Its image is the regular language of words $x\in\A^+$
    satisfying all of the following conditions:
    \begin{enumerate}
    	\item $aa,abc,cba,cc\notin\F(x)$,
		\item $c$ is neither a prefix nor a suffix of $x$,
	    \item $x$ does not begin with $ba$ or end with $ab$.
    \end{enumerate}
    Hence, for each $x$ satisfying such conditions there exists a unique
    $y\in\A^*$ such that $x=c^{-1}\xi(cyc)c^{-1}$. This $y$ is obtained
    from $x$ by simply deleting an occurrence of $b$ from each run.
\end{rem}

\begin{lemma}
	\label{t:monob}
	Let $x,y\in\A^*$. The following are equivalent:
	\begin{enumerate}
		\item\label{i:x<y} $x<y$,
		\item\label{i:xixy} $\xi(x)<\xi(y)$,
		\item\label{i:xi-} $\xi(xc)c^{-1}<\xi(yc)c^{-1}$.
	\end{enumerate}
\end{lemma}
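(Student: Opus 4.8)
The plan is to prove the equivalence (\ref{i:x<y})$\iff$(\ref{i:xixy}), i.e.\ $x<y\iff\xi(x)<\xi(y)$, directly from the fact that $\xi$ is computed by the sequential transducer of \Cref{f:bseq}, and then to deduce (\ref{i:xixy})$\iff$(\ref{i:xi-}) from it together with the elementary observation that right-appending the maximal letter $c$ preserves the order. A useful simplification throughout is that, since $<$ is a strict total order on $\A^*$, each ``$\Leftarrow$'' follows formally from the matching ``$\Rightarrow$'': if $\xi(x)<\xi(y)$, then $x=y$ is impossible (it forces $\xi(x)=\xi(y)$) and $y<x$ is impossible (by the forward implication it would force $\xi(y)<\xi(x)$), so $x<y$. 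Hence it suffices to establish the forward implications.

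To prove $x<y\Rightarrow\xi(x)<\xi(y)$ I would split according to the two ways $x<y$ can occur. If $y$ is a proper prefix of $x$, then, since the transducer carries no terminal output, $\xi(y)$ is \emph{exactly} the output emitted while reading the prefix $y$ of $x$; thus $\xi(y)$ is a prefix of $\xi(x)$, and since every transition emits at least one letter while $x$ is strictly longer than $y$, it is a \emph{proper} prefix, giving $\xi(x)<\xi(y)$ by the reversed-prefix convention. Otherwise $x=u\alpha x'$ and $y=u\beta y'$ with $\alpha<\beta$; let $s$ be the unique state reached after reading the common prefix $u$ and $p$ the output emitted along the way, so that $\xi(x)=p\,o_\alpha\cdots$ and $\xi(y)=p\,o_\beta\cdots$, where $o_\gamma$ is the output emitted on reading the single letter $\gamma$ from $s$.

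The crux, and the step I expect to be the main obstacle, is the finite verification that \emph{in every state} $s$ and for every pair $\alpha<\beta$ one has $o_\alpha<o_\beta$ with neither a prefix of the other. This is delicate precisely because $\xi$ is context-dependent and \emph{not} letter-by-letter monotone at the first output symbol: from state $a$ both inputs $a$ (output $ba$) and $b$ (output $bb$) emit a leading $b$, so the order is only decided at the second output symbol, and likewise from state $c$ the inputs $b$ (output $bb$) and $c$ (output $bc$) share their first symbol. Inspecting the four states of \Cref{f:bseq} one checks case by case that the pairs $(o_\alpha,o_\beta)$ are always mutually non-prefix and correctly ordered. Granting this, the first position at which $\xi(x)$ and $\xi(y)$ differ lies within $o_\alpha$ versus $o_\beta$, so it is settled in favour of $\xi(x)<\xi(y)$ regardless of the continuations; in particular the reversed-prefix convention never intervenes in this case.

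Finally, for (\ref{i:xixy})$\iff$(\ref{i:xi-}) I would invoke the easy fact that $x<y\iff xc<yc$ for $c=\max\A$: when $y$ is a proper prefix of $x$, the letter following $y$ in $x$ is either $<c$ and decides the comparison, or equals $c$, in which case $yc$ becomes a proper prefix of $xc$; the remaining case and both reverse directions follow as above by totality. Applying the already-proved equivalence (\ref{i:x<y})$\iff$(\ref{i:xixy}) to the words $xc$ and $yc$ gives $xc<yc\iff\xi(xc)<\xi(yc)$. Since reading $c$ always emits a trailing $c$, both $\xi(xc)$ and $\xi(yc)$ end in $c$ (cf.~\eqref{e:xi-}), so $\xi(xc)=\bigl(\xi(xc)c^{-1}\bigr)c$ and similarly for $y$; applying the same appending fact to $\xi(xc)c^{-1}$ and $\xi(yc)c^{-1}$ yields $\xi(xc)<\xi(yc)\iff\xi(xc)c^{-1}<\xi(yc)c^{-1}$. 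Chaining these with $x<y\iff xc<yc$ produces exactly (\ref{i:x<y})$\iff$(\ref{i:xi-}), which together with (\ref{i:x<y})$\iff$(\ref{i:xixy}) completes the proof.
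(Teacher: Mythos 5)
Your proof is correct. The forward direction $\cref{i:x<y}\Rightarrow\cref{i:xixy}$ is in substance the paper's own argument: your case analysis over transducer states is the paper's case analysis over the last letter of the longest common prefix $u$, and the two delicate pairs you single out ($o_a=ba$ versus $o_b=bb$ from state $a$, and $o_b=bb$ versus $o_c=bc$ from state $c$) are exactly the cases the paper treats separately; the twelve-case verification you leave as ``one checks'' does go through, so that is a presentational rather than a mathematical gap. Where you genuinely diverge is in everything else, and to your advantage. The paper proves the implications cyclically, $\cref{i:x<y}\Rightarrow\cref{i:xixy}\Rightarrow\cref{i:xi-}\Rightarrow\cref{i:x<y}$, and the last two steps each carry their own careful case analysis on $\xi(xc)c^{-1}$ via \cref{e:xi-} (distinguishing whether $\xi(y)$ is a proper prefix of $\xi(x)$ ending in $c$, whether the longest common prefix of the images ends in $a$, $b$ or $c$, and so on). You bypass all of this by (i) noting that, since $<$ is a strict total order and $\xi$ is a function, each converse implication follows formally from the corresponding forward one by trichotomy, and (ii) obtaining the equivalence with \cref{i:xi-} by applying the already-proved equivalence $\cref{i:x<y}\Leftrightarrow\cref{i:xixy}$ to the words $xc$ and $yc$, sandwiched between two uses of the elementary fact that appending the maximal letter $c$ preserves the order (once to pass from $x,y$ to $xc,yc$, once to strip the final $c$ from $\xi(xc)$ and $\xi(yc)$, which always end in $c$). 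This is shorter and less error-prone than the paper's direct treatment of $\xi(xc)c^{-1}$, and it delivers the lemma in exactly the black-box form in which it is later invoked in the proof of \Cref{t:dBl}; the only thing the paper's longer route provides in addition is an explicit description of how the images $\xi(x)$ and $\xi(y)$ can relate, which is not needed for the statement.
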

\begin{proof}
	Suppose $x<y$. If $y$ is a proper prefix of $x$, then $\xi(y)$ is a
	proper prefix of $\xi(x)$ since $\xi$ is sequential. Otherwise, let
	$x=ua'v$ and $y=ub'w$ for some $u,v,w\in\A^*$ and $a',b'\in\A$ with $a'<b'$.
	If $u$ ends in $b$ or is empty, then $\xi(x)$ and $\xi(y)$ begin
	respectively in $\xi(u)a'$ and $\xi(u)b'$; this also occurs if $u$ ends
	in $a$ and $a'\neq a$, or if $u$ ends in $c$ and $b'\neq c$. If we
	have $a'=a$ with $u$ ending in $a,$  then $\xi(x)$ begins with
	$\xi(u)ba$ while $\xi(y)$ begins with $\xi(u)bb$ or $\xi(u)c$;
	the case where $b'=c$ and $u$ ends in $c$ is similar. Thus
	$\cref{i:x<y}\Rightarrow\cref{i:xixy}$.
	
	Next, assume $\xi(x)<\xi(y)$. If $\xi(y)$ is not a prefix of
	$\xi(x)$, inequality~\ref{i:xi-} follows in view of
	\cref{e:xi-}; the same happens if $\xi(y)$ does not end in $c$, since
	in this case
	$\xi(xc)c^{-1}\leq\xi(x)<\xi(y)=\xi(yc)c^{-1}$.
	Let then $\xi(y)$ be a proper prefix of $\xi(x)$, with
	$\xi(y)$ ending in $c$. Now, $\xi(x)$ cannot begin with	$\xi(y)c$,
	since by definition $cc$ cannot occur in an image under	$\xi$. Hence
	$\xi(x)$ has either $\xi(y)a$ or $\xi(y)b$ as a prefix. In both cases we
	have $\xi(xc)c^{-1}\leq\xi(x)<\xi(y)b=\xi(yc)c^{-1}$ (note that
	$\xi(xc)c^{-1}\neq\xi(y)b$, since $\xi(x)\neq\xi(y)$). Hence
	$\cref{i:xixy}\Rightarrow\cref{i:xi-}$ in all cases.
	
	Finally, suppose inequality~\ref{i:xi-} from the statement holds. If
	$\xi(yc)c^{-1}$ is a proper prefix of $\xi(xc)c^{-1}$, then so is
	$\xi(y)$ in particular, and this implies that $y$ is a proper prefix of
	$x$. If	$\xi(xc)c^{-1}=Ua'V$ and $\xi(yc)c^{-1}=Ub'W$ for some
	$U,V,W\in\A^*$ and $a',b'\in\A$ with $a'<b'$, then there exist $u\in\A^*$
	such that either $U=\xi(u)b$ or $U=\xi(u)$. In the first case, it is
	easy to see that $x$ and $y$ respectively begin with $ua'$ and $ub'$, so
	that $x<y$. The same happens when $U=\xi(u)$ ends in $b$. If $U$
	ends in $c$, then $a'=a$ and $b'=b$ since $cc$ cannot appear
	in an image under $\xi$; hence $x$ begins with $ua$ while $y$ either
	equals $u$, or it begins with $ub$ or $uc$. The case where $U$ ends in
	$a$ is symmetric, except for the $x=u$ option which cannot occur. In all
	cases, we get $\cref{i:xi-}\Rightarrow\cref{i:x<y}$ as desired.
\end{proof}

We can now prove the analogue of \Cref{t:dAl,t:dCl} for the letter $b$.
\begin{lemma}
	\label{t:dBl}
	A word $x\in\A^*$ is singular if and only if $X=c^{-1}\xi(cxc)c^{-1}$ is
	singular.
\end{lemma}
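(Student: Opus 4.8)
The plan is to prove the contrapositive, establishing that $x$ is reversible if and only if $X=c^{-1}\xi(cxc)c^{-1}$ is reversible, exactly as was done in \Cref{t:dAl} and \Cref{t:dCl}. The strategy mirrors those two lemmas: transfer a reversible factorization of $x$ into one of $X$ via the $\xi$ map, using the monotonicity \Cref{t:monob} to preserve lexicographic comparisons, and then argue conversely that any reversible factorization of $X$ can be brought into the standard transferred form. The key analytic tools will be the commutation $\overline{\xi(x)}=\xi(\overline x)$ noted after the transducer, the boundary-correction identities in \cref{e:xi-}, and \Cref{t:monob}, which together play the role that \cref{e:lamrho} and \Cref{t:monoa} (resp.~\Cref{t:monoc}) played before.

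First I would take a factorization $x=\overline u v w$ with $v\neq\overline v$ and $u\neq w$, and compute the corresponding factorization of $X$. Writing $X=c^{-1}\xi(cxc)c^{-1}=c^{-1}\xi(c\overline u\,v\,wc)c^{-1}$, the goal is to split this as a product whose three blocks are, respectively, the reversal of a $\xi$-image of $u$ (with appropriate boundary $c^{-1}$ and $c$ corrections), a $\xi$-image of $v$, and a $\xi$-image of $w$. Because $\xi$ is sequential, the image of a concatenation factors at the corresponding boundaries once one accounts for whether runs of $b$ straddle the cut — which is precisely what the $\xi(\cdot c)c^{-1}$ and $c^{-1}\xi(c\cdot)$ adjustments in \cref{e:xi-} are designed to handle. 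Using $\overline{\xi(\cdot)}=\xi(\overline\cdot)$ to identify the first block as a reversal of a $\xi$-image, and then invoking \Cref{t:monob} applied to the pairs $u,w$ and $v,\overline v$, I would conclude that the original factorization of $x$ is reversible exactly when the transferred factorization of $X$ is reversible. The comparison $v<\overline v \iff (\text{transferred }v)<(\text{transferred }\overline v)$ needs the middle-block version of \Cref{t:monob}, possibly together with the boundary identities to match the $c^{-1}$ prefix.

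The converse direction is where the main obstacle lies, as in \Cref{t:dAl} and \Cref{t:dCl}: given an arbitrary reversible factorization $X=\overline U V W$, I must show it can be assumed to be of the transferred shape, i.e.\ that the cuts fall at positions compatible with being $\xi$-images of a splitting of $x$. As before, I would reduce to the case where $V$ begins and ends with distinct letters, and by the reversal symmetry assume $V$ does not begin with the maximal behaviour that would block the argument. The delicate point is to locate the cuts relative to runs of $b$ and relative to occurrences of $a$ and $c$: since $X$ lies in the image language characterized in \Cref{t:image} (no $aa$, $cc$, $abc$, $cba$, no leading/trailing $c$, etc.), these structural constraints force $U$ and $W$ to begin with the correct letters so that each block is genuinely a (boundary-corrected) $\xi$-image. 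I expect to argue that $W$ cannot be empty and must begin appropriately, ruling out the bad configuration $\overline V<V$ with $W<U$, just as in the earlier lemmas. Once the cuts are shown to sit at $\xi$-boundaries, \Cref{t:image} guarantees a unique preimage splitting $x=\overline u v w$, and \Cref{t:monob} transfers the reversibility back to $x$, completing the equivalence.

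Throughout, I would lean on \Cref{t:image} to certify that $X$ really is a well-formed $\xi$-image so that the deletion of one $b$ per run recovers $x$ unambiguously, and on the commutation of $\xi$ with reversal to keep the $\overline u$ block symmetric with the others. The proof should therefore read as a close parallel to \Cref{t:dAl} and \Cref{t:dCl}, with the extra bookkeeping coming entirely from the fact that $\xi$ modifies runs of $b$ rather than prepending or appending a single letter, so the boundary corrections of \cref{e:xi-} must be tracked at both ends of each factor.
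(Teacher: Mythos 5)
Your overall strategy is the right one and matches the paper's: transfer factorizations through $\xi$ using the boundary corrections of \cref{e:xi-}, and use \Cref{t:monob} to move lexicographic comparisons back and forth. But the argument has genuine gaps, because the analogy with \Cref{t:dAl} and \Cref{t:dCl} breaks down exactly at the point you defer to ``bookkeeping.'' There is no single ``transferred factorization'': when a cut of $x=\overline{u}vw$ is lifted to $X$, the letter $b$ that $\xi$ inserts at that cut (in the middle of $aa$ or $cc$, or at the head of a run of $b$'s) must be assigned to one side or the other, and the correct assignment depends on the pair of letters flanking the cut. The paper therefore works through roughly nine distinct corrected factorizations (\cref{e:+0-+} through \cref{e:+-0+}), indexed by the first letters of $u$, $w$ and the endpoints of $v$. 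Moreover, the correspondence is not always a clean ``reversible iff reversible'' transfer as you claim: in some configurations (e.g.\ \cref{e:+0-0} and \cref{e:0-0+}) the corrected factorization of $X$ is reversible \emph{unconditionally}, and the conclusion drawn is that such a configuration simply cannot occur at all in the singular word --- a mode of argument absent from \Cref{t:dAl}, from \Cref{t:dCl}, and from your outline.

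Second, you never invoke \Cref{t:bruns}. The case analysis is only finite and tractable because \Cref{t:bruns} pins down $\F(x)\cap\A^2$ and $\F(X)\cap\A^2$ (to $\{ab,ac,ba,bb,ca\}$ or $\{ac,bb,bc,ca,cb\}$, according to the sign of $\delta=|x|_c-|x|_a+1$), which eliminates most combinations of flanking letters on both sides of the equivalence. That tool is unavailable precisely when $\delta=0$, i.e.\ $|X|_a=|X|_c+1$, so this case must be split off and settled by the explicit classification of \Cref{t:delta0}; likewise the cases where $a$ or $c$ does not occur in $X$ must be reduced to \Cref{t:dAl} or \Cref{t:dCl}. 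Your proposal mentions none of these reductions, and without the length-two-factor restrictions the ``locate the cuts'' step in your converse direction cannot be carried out. So while the skeleton is correct and parallels the paper, the substance of the proof --- the flanking-letter case analysis driven by \Cref{t:bruns}, the unconditionally reversible configurations, and the separate $|X|_a=|X|_c+1$ case --- is missing.
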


\begin{proof}
    Assume $|X|_a=|X|_c+1$. By \Cref{t:delta0}, $X$ is singular if and only if
    either $X$ or $\overline{X}$ is $ab^n(ca)^m$ for some $m\geq 0$ and
    $n\neq 1$
    (as $abc,cba\notin\F(x)$); equivalently, $x$ or $\overline{x}$ is
    $ab^{n-1}(ca)^m$ (or $a(ca)^m$ when $n=0$), which happens if and only if
    $x$ is singular, as $|x|_a=|X|_a$ and $|x|_c=|X|_c$.
    
    Now suppose $|X|_a\neq|X|_c+1$. We may also assume that $X$ contains all three letters. Indeed, if $a$
    (resp.~$c$) does not occur, then $X=\lambda_b(x)b$ (resp.~$X=\rho_b(x)b^{-1}$) and the assertion
    follows from \Cref{t:dAl} (resp.~\Cref{t:dCl}). If
    $X$ does not contain $b,$ then $X$ is singular only if
    $X=a(ca)^m$ for some $m\geq 0$, which contradicts $|X|_a\neq|X|_c+1$.
    
    Let $X$ be singular. We need to show
    that no factorization $x=\overline{u}\cdot v\cdot w$ is reversible; without loss
    of generality, we may assume $v=a'v'b'$ for some letters $a'<b'$. If $u=\eps$,
    then $w\leq u$ and the factorization is not reversible, as desired; let
    then $u_1$ be the first letter of $u$. We have three options for $a',b'$:
    \begin{enumerate} 
        \item\label{i:ab} $a'=a$, $b'=b$.\\
        If $w=\eps$, then $X$ ends in $\xi(av'b)$; by \Cref{t:a1st},
        this is impossible if $u\neq\eps$.
        
        Let then $w$ begin with $w_1\in\A$. If $w_1<u_1$, then $w<u$ and we are
        done, so we only need to check the cases where $u_1\leq w_1$. By
        \Cref{t:bruns}, $\F(X)\cap\A^2$ is contained in $\{ab,ac,ba,bb,ca\}$
        if $|X|_a>|X|_c+1$, or in $\{ac,bb,bc,ca,cb\}$ if the opposite
        inequality holds.
        This excludes the sub-cases where $u_1\neq c$ and $w_1=c$.
        \begin{itemize}
            \item If $u_1=w_1=a$, then in $X$ a single $b$ is inserted between
            $u_1$ and $a'$. Since the factorization
            \begin{equation}
                \label{e:+0-+}
                X=c^{-1}\xi(c\overline{u})b\cdot\xi(v)b^{-1}
                \cdot b\xi(wc)c^{-1}
            \end{equation}
            cannot be reversible, we have $\xi(wc)c^{-1}\leq\xi(uc)c^{-1}$,
            so that $w\leq u$ by \Cref{t:monob}.
            \item If $u_1=a$ and $w_1=b$, then
            \begin{equation}
                \label{e:+0-0}
                X=c^{-1}\xi(c\overline{u})b\cdot\xi(v)b^{-1}
                \cdot\xi(wc)c^{-1}
            \end{equation}
            would always be reversible; hence this case cannot happen.
            \item If $u_1=w_1=b$, then as
            \begin{equation}
                \label{e:00-0}
                X=c^{-1}\xi(c\overline{u})\cdot\xi(v)b^{-1}
                \cdot\xi(wc)c^{-1}
            \end{equation}
            cannot be reversible, we again obtain $w\leq u$ by \Cref{t:monob}.
            \item Finally, if $u_1=w_1=c$, the non-reversibility of
            \begin{equation}
                \label{e:0000}
                X=c^{-1}\xi(c\overline{u})\cdot\xi(v)
                \cdot\xi(wc)c^{-1}
            \end{equation}
            again implies $w\leq u$.
        \end{itemize}
        \item $a'=a$, $b'=c$.\\
        If $w=\eps$, then $X$ ends in $\xi(av'c)b$; once again, this is
        impossible if $u\neq\eps$, by \Cref{t:a1st}. Let $w$ begin with
        $w_1\in\A$ such that $u_1\leq w_1$, again. As in case~\ref{i:ab}, the
        possible factors of length $2$ in $X$ exclude some options for $u_1$
        and $w_1$. We only need to examine the following.
        \begin{itemize}
            \item If $u_1=w_1=a$, then as
            \begin{equation}
                \label{e:0+00}
                X=c^{-1}\xi(c\overline{u})\cdot b\xi(v)\cdot\xi(wc)c^{-1}
            \end{equation}
            cannot be reversible, we obtain $w\leq u$ by \Cref{t:monob}.
            \item If $u_1=w_1=c$, then as
            \begin{equation}
                \label{e:00+0}
                X=c^{-1}\xi(c\overline{u})\cdot\xi(v)b\cdot\xi(wc)c^{-1}
            \end{equation}
            is not reversible, it follows that $w\leq u$.
        \end{itemize}
        \item $a'=b$, $b'=c$.\\
        If $w=\eps$, then $X$ ends in $V=\xi(bv'c)b$, which in turn begins
        with $bb$, so that $V<\overline{V}$. Since a factorization 
        $X=\overline{U}\cdot V\cdot\eps$ with $U\neq\eps$ (as $u\neq\eps$) is
        reversible, we obtain $w\neq\eps$ also in this case.
        If $w_1$ is its first letter and $u_1\leq w_1$, the following sub-cases
        are left to examine after factors of length $2$ in $X$ are considered.
        \begin{itemize}
            \item If $u_1=w_1=a$, then we obtain $w\leq u$ from the
            factorization in \cref{e:0000}.
            \item If $u_1=w_1=b$, then considering the factorization
            \begin{equation}
                \label{e:0-00}
                X=c^{-1}\xi(c\overline{u})\cdot b^{-1}\xi(v)\cdot\xi(wc)c^{-1}
            \end{equation}
            leads to the conclusion $w\leq u$ as above.
            \item If $u_1=b$ and $w_1=c$, then the factorization
            \begin{equation}
                \label{e:0-0+}
                X=c^{-1}\xi(c\overline{u})\cdot b^{-1}\xi(v)\cdot b\xi(wc)c^{-1}
            \end{equation}
            is always reversible; therefore this cannot occur.
            \item If $u_1=w_1=c$, then we get $w\leq u$ from
            \begin{equation}
                \label{e:+-0+}
                X=c^{-1}\xi(c\overline{u})b\cdot b^{-1}\xi(v)
                \cdot b\xi(wc)c^{-1}.
            \end{equation}
        \end{itemize}
    \end{enumerate}
    This concludes the proof that if $X$ is singular then so is $x$.
    
    Conversely, let us suppose that $x$ is singular with $|x|_a\neq |x|_c+1$,
    and prove the same for $X$. We need to show that no factorization
    $X=\overline{U}\cdot V\cdot W$ is reversible, where $V=a'V'b'$ for some $a',b'\in\A$
    with $a'<b'$, without loss of generality.
    
    If $U=\eps$, there is nothing to prove as $W\leq U$, so let $U_1$ be the
    first letter of $U$. If $W$ were empty, then since $X$ cannot end in $c$,
    we would have $a'=a$ and $b'=b$. Hence $x$ would not end in $a$; by
    \Cref{t:a1st}, $a'=a$ would then be a prefix of $x$, and then of $X$,
    against $U\neq\eps$.
    Let then $W$ begin with $W_1\in\A$, with $U_1\leq W_1$ (otherwise $W<U$ and
    there is nothing to prove).
    
    By \Cref{t:bruns}, the set $\F(x)\cap\A^2$ is contained in either
    $\{ac,bb,bc,ca,cb,cc\}$ or $\{aa,ab,ac,ba,bb,ca\}$, depending on the sign
    of $|x|_c-|x|_a+1$. This implies that $\F(X)\cap\A^2$ is contained in
    $\{ac,bb,bc,ca,cb\}$ or $\{ab,ac,ba,bb,ca\}$, respectively. Hence, the
    only cases left to examine are the following:
    \begin{enumerate}
        \item $U_1=a$, $a'=b$, $b'=c$, and $W_1=a$.\\
        Then $V$ begins with either $ba$ or $bb$. In the first case, there
        exist $u,v,w\in\A^+$ such that $x=\overline{u}vw$ and our factorization
        $X=\overline{U}VW$ coincides with \cref{e:0+00}; as $x$ is singular, we have
        $w\leq u$ and then $W\leq U$ by \Cref{t:monob}. In the second case,
        the same argument applied to \cref{e:0000} leads to $W\leq U$ as well. 
        \item $U_1=b$, $a'=a$, $b'=b$, and $W_1=b$.\\
        By \Cref{t:image}, $X$ cannot begin with $ba$, so that $U$ begins with
        $ba$ or $bb$. The same is true for $W$, for otherwise $x$ would end
        with $b$ and have an internal occurrence of $a$, against
        \Cref{t:a1st}.
        \begin{itemize}
            \item If $U$ and $W$ both begin with $ba$, there exist $u,v,w$
            such that $x=\overline{u}vw$, and \cref{e:+0-+} is
            $X=\overline{U}\cdot V\cdot W$.
            Then $w\leq u$, which implies $W\leq U$ by \Cref{t:monob}.
            \item If $U$ began with $ba$, but $W$ with $bb$, we would obtain
            \cref{e:+0-0} and $x=\overline{u}vw$ for some $u<w$, which is
            impossible.
            \item If $U$ begins with $bb$ and $W$ with $ba$, we have $W\leq U$
            as desired.
            \item Finally, if both $U$ and $W$ begin with $bb$, \cref{e:00-0}
            and $x=\overline{u}vw$ describe the situation; again $w\leq u$ follows,
            so that $W\leq U$.
        \end{itemize}
        \item $U_1=b$, $a'=b$, $b'=c$, and $W_1=b$.\\
        If $U=b$, then $W\leq U$ and we are done. If $U\neq b$ and $W=b$
        instead, $x$ would end with $c$, against \Cref{t:a1st}. Hence we can
        assume $U$ and $W$ begin in either $bb$ or $bc$.
        \begin{itemize}
            \item If $U$ and $W$ both begin with $bb$, \cref{e:0-00} applies
            and yields $W\leq U$.
            \item If $U$ began with $bb$ while $W$ began with $bc$,
            \cref{e:0-0+} would apply for some $u<w$, a contradiction.
            \item If $U$ begins with $bc$ and $W$ begins with $bb$, there is
            nothing to prove.
            \item If $U$ and $W$ both begin with $bc$, \cref{e:+-0+} describes
            the situation and implies $W\leq U$.
        \end{itemize}
        \item $U_1=c$, $a'=a$, $b'=b$, and $W_1=c$.\\
        Then $V$ ends in either $bb$ or $cb$, leading to \cref{e:0000,e:00+0}
        respectively, and then to $W\leq U$ in both cases.
    \end{enumerate}
    The proof is now complete.
\end{proof}

\noindent We now prove one of the main results of this section.
\begin{thm}
	\label{t:ternary}
    Let $\A=\{a,b,c\}$ with $a<b<c$. Every abelian class of $\A^*$ contains
    exactly one pair $\{x,\overline{x}\}$ of singular words.
\end{thm}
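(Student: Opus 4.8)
\textbf{Proof strategy for \Cref{t:ternary}.}

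The plan is to establish existence and uniqueness of the singular pair $\{x,\overline x\}$ in each abelian class by induction on the number of occurrences of the middle letter $b$, using the reduction \Cref{t:dBl} as the engine. The key observation is that the map $y\mapsto c^{-1}\xi(cyc)c^{-1}$ of \Cref{t:image} removes exactly one occurrence of $b$ from every run, and by \Cref{t:dBl} it sends singular words to singular words and (being invertible on its image) reflects singularity as well. So I would first treat the base cases where $\xi$ cannot be meaningfully undone, and then push the general case down to them.

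For the base case, suppose the abelian class has Parikh vector $\pv=(n_a,n_b,n_c)$ with $|\pv|_b=n_b$ small relative to $|n_c-n_a|$. The cleanest split is the case $n_a=n_c+1$, which is \emph{exactly} the hypothesis of \Cref{t:delta0}: there the singular words are precisely $ab^n(ca)^m$ and its reversal, giving existence and uniqueness immediately (one checks that the Parikh vector determines $n$ and $m$ uniquely, and that the two words coincide under reversal only in degenerate cases). More generally, when $|x|_b<|\delta|$ with $\delta=n_c-n_a+1\neq 0$, \Cref{t:csep}/\Cref{t:asep} force one of $cc$ or $aa$ to be a factor, so by \Cref{t:sep} the extreme letter is separating; this rigidly constrains the word and one can check directly that a singular arrangement exists and is unique up to reversal. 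I would also dispose of the degenerate classes where one of the three letters is absent: if $c$ (resp. $a$) does not occur, the word is a binary word over $\{a,b\}$ (resp. $\{b,c\}$) and uniqueness follows from \Cref{Stbispecial}; if $b$ is absent, singularity forces $a(ca)^m$ and there is nothing to prove.

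For the inductive step, take a class with $n_b\geq|\delta|$ and suppose all classes with strictly fewer $b$'s (and the same-sign $\delta$, or appropriately reduced data) have a unique singular pair. Given a singular word $X$ in the class, \Cref{t:bruns} guarantees that $X$ has exactly $|\delta|$ runs of $b$ (when $\delta\neq0$), so $X$ lies in the image of $y\mapsto c^{-1}\xi(cyc)c^{-1}$ after checking the forbidden-factor and boundary conditions of \Cref{t:image}; its preimage $x$ has the same $n_a,n_c$ but exactly $n_b-|\delta|$ occurrences of $b$, and is singular by \Cref{t:dBl}. By the induction hypothesis the reduced class has a unique singular pair $\{x,\overline x\}$, and since $\xi$ commutes with reversal, applying $y\mapsto c^{-1}\xi(cyc)c^{-1}$ recovers a unique singular pair $\{X,\overline X\}$ in the original class. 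Existence runs the same machine in reverse: start from the unique singular word of the reduced class and apply the map. The main obstacle I anticipate is the bookkeeping at the boundary between the base and inductive regimes — precisely ensuring that when $n_b$ first reaches $|\delta|$ the word genuinely has $|\delta|$ runs and satisfies \emph{all} the hypotheses of \Cref{t:image} (no leading/trailing $c$, no $ba$ prefix or $ab$ suffix, none of the forbidden factors $aa,abc,cba,cc$), so that the reduction is both applicable and invertible. This is exactly where \Cref{t:bruns,t:nospecial,t:a1st,t:ends} must be combined carefully, and where the $\delta>0$ versus $\delta<0$ asymmetry has to be tracked.
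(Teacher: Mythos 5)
Your overall architecture — reduce via the three ``letter-removal'' lemmas until you hit a rigid base case, then invert — is the right one, and your treatment of the middle regime ($n_b\geq|\delta|$, reduce with $y\mapsto c^{-1}\xi(cyc)c^{-1}$ and \Cref{t:dBl,t:bruns,t:image}) matches the paper's case~3 almost exactly. But the induction as you have set it up does not close.

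The problem is your ``base case'' $n_b<|\delta|$. Knowing that $aa$ or $cc$ occurs, hence that $\min\A$ or $\max\A$ is separating (\Cref{t:asep,t:csep,t:sep}), does \emph{not} rigidly determine the word: it only tells you that $x=\lambda_a(x')a$ or $x=\rho_c(x')c^{-1}$ for some $x'$, and $x'$ is again an arbitrary ternary word whose singularity is equivalent to that of $x$ by \Cref{t:dAl,t:dCl}. Its Parikh vector is $(n_a-n_b-n_c-1,n_b,n_c)$ or $(n_a,n_b,n_c-n_a-n_b+1)$ — in both cases $n_b$ is \emph{unchanged}. So this regime is not finitely checkable, and worse, iterating these reductions can land you back in the regime $n_b\geq|\delta'|$ at the \emph{same} value of $n_b$ (e.g.\ $(7,2,10)\mapsto(7,2,2)\mapsto(2,2,2)$, where $n_b=2\geq|\delta|=1$). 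Since your induction is on $n_b$ alone, the induction hypothesis is not available there, and the argument is circular. The paper avoids this by inducting on the total length $n_a+n_b+n_c$, which strictly decreases under all three reductions ($\lambda_a$, $\rho_c$, and $\xi$), treating the separating-letter regimes as inductive steps in their own right (its cases~1 and~2) with the only genuine base cases being $n_an_bn_c=0$ (handled by \Cref{t:Chr}) and $\delta=0$ (handled by \Cref{t:delta0}). Your proof is repaired by replacing the induction on $n_b$ with induction on $|x|$ and converting your ``base case'' into two further inductive cases driven by \Cref{t:dAl,t:dCl}; one should also note that excluding $abc,cba$ when $\delta\neq0$ (needed for \Cref{t:image}) comes from \Cref{t:ab+bc} rather than \Cref{t:nospecial}.
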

\begin{proof}
	Let $\pv=(n_a,n_b,n_c)\in\mathbb N^3$. We need to show that up to
	reversal, there exists a unique singular word $x$ with Parikh
	vector $\pv$, i.e., such that	$|x|_a=n_a$, $|x|_b=n_b$, and $|x|_c=n_c$.
	If $n_an_bn_c=0$, this follows from \Cref{t:Chr}, since there exists a
	unique power of a Christoffel word for every non-zero Parikh vector. In
	particular, this proves the assertion for $n_a+n_b+n_c=|x|\leq 2$.
	
	Let then $n_an_bn_c>0$, and assume that the result holds for all vectors
	$(p,q,r)$ with $p+q+r<n_a+n_b+n_c$. We can identify three cases:
	\begin{enumerate}
		\item If $n_a\geq n_b+n_c+1$, then by induction there exists a unique
		pair of singular words $\{x',\overline{x'}\}$ with Parikh vector
		$(n_a-n_b-n_c-1,n_b,n_c)$.
		Then $x=\lambda_a(x')a$ and $\overline{x}$ have vector $\pv$, and are
		singular by \Cref{t:dAl}. Any other singular word $y$ with vector
		$\pv$ begins and ends with $a$ by \Cref{t:a1st}, and has $a$ as a
		separating letter by \Cref{t:asep}; hence $y=\lambda_a(y')a$ for some
		$y'$ with vector $(n_a-n_b-n_c-1,n_b,n_c)$. Such a $y'$ is necessarily
		singular by \Cref{t:dAl}, so that $y'\in\{x',\overline{x'}\}$. This case is
		therefore settled.
		\item If $n_c\geq n_a+n_b-1$, then by induction there exists a unique
		pair of singular words $\{x',\overline{x'}\}$ with vector
		$(n_a,n_b,n_c-n_a-n_b+1)$ (note that $n_a+n_b-1\geq 1$ since $pq>0$).
		Then $x=\rho_c(x')c^{-1}$ and $\overline{x}$ have vector $\pv$, and are
		singular by \Cref{t:dCl}. By \Cref{t:csep}, any singular word $y$ with
		vector $\pv$ has $c$ as a separating letter, so that
		$y=\rho_c(y')c^{-1}$ for some word $y'$, which is singular by
		\Cref{t:dCl} and has vector $(n_a,n_b,n_c-n_a-n_b+1)$. Thus
		$y'\in\{x',\overline{x'}\}$, as desired.
		\item If $n_a<n_b+n_c+1$ and $n_c<n_a+n_b-1$, it follows
		$n_b>|\delta|$,
		with $\delta=n_c-n_a+1$. By \Cref{t:delta0}, if $\delta=0$ then
		$x=ab^{n_b}(ca)^{n_c}$ and $\overline{x}$ are the only singular words with
		vector $\pv$. Let us then assume $\delta\neq 0$.
		
		By induction, there exists a unique pair of singular words
		$\{x',\overline{x'}\}$ with vector $(n_a,n_b-|\delta|,n_c)$.
		Let $x=c^{-1}\xi(cx'c)c^{-1}$. Then $x$ and $\overline{x}$ are singular by
		\Cref{t:dBl}. Since $aa,cc\notin\F(x)$, by \Cref{t:asep,t:csep} we
		obtain $|x|_b\geq|\delta|$. \Cref{t:bruns} then implies that
		$x$ and $\overline{x}$ have exactly $|\delta|$ runs of $b$, and hence Parikh
		vector $\pv$.
		If $y$ is any singular word with vector $\pv$, then $aa,cc\notin\F(y)$
		by \Cref{t:asep,t:csep}. Furthermore, by \Cref{t:a1st}, $y$ cannot
		begins with $c$ or $ba$, and it cannot end with $c$ or $ab$.
		Finally, we have
		$abc,cba\notin\F(y)$ by \Cref{t:ab+bc}, as $\delta\neq0$.
		In view of \Cref{t:image}, this implies the existence of a unique word
		$y'$ such that $y=c^{-1}\xi(cy'c)c^{-1}$. By \Cref{t:bruns}, $y'$ has
		vector $(n_a,n_b-|\delta|,n_c)$, and by \Cref{t:dBl} it is singular.
		Hence we once again obtain $y'\in\{x',\overline{x'}\}$.\qedhere
	\end{enumerate}
\end{proof}

Behind induction, the above proof hides the following algorithm for
determining the unique (up to reversal) singular word with a given Parikh
vector $(n_a,n_b,n_c)$:
\begin{enumerate}
    \item\label{i:reduce} Starting from the vector $(n_a,n_b,n_c),$ we iterate the following rule:
    \begin{itemize}
        \item If $n_a\geq n_b+n_c+1$, the next vector is
        $(n_a-n_b-n_c-1,n_b,n_c)$;
        \item If $n_c\geq n_a+n_b-1$, the subsequent vector is $(n_a,n_b,n_c-n_a-n_b+1)$;
        \item Otherwise, the next vector is $(n_a,n_b-|n_c-n_a+1|,n_c)$.
    \end{itemize}
    \item Repeat step~\ref{i:reduce} until a vector $(p,q,r)$ is reached with $pqr=0$
    or $p=r+1$ is reached.
    \item Use \Cref{t:Chr} or \Cref{t:delta0}, respectively, to find the unique (up to reversal)
    singular word  with vector $(p,q,r)$.
    \item Go back through the previous vectors, using the appropriate result
    (\Cref{t:dAl}, \Cref{t:dCl}, or \Cref{t:dBl}) to construct the
    corresponding singular word.
\end{enumerate}

\begin{example}
Let us show that the only singular words with Parikh vector $(3,5,7)$ are
$x=acbcbcbcacbcbca$ and its reverse. Step~\ref{i:reduce} yields vector
$(3,5,0)$, and since the only power of a Christoffel word with $3-1$
occurrences of $a$ and $5+1$ of $b$ is $abbbabbb$, \Cref{t:Chr} implies that
$x'=abbbabba$ is singular. Hence $x=\rho_c(x')c^{-1}$ is singular too, by
\Cref{t:dCl}.

The singular words with vector $(3,7,5)$ are $y=acbbbcbbcacbbca$ and
$\overline{y}$.
Indeed, the above algorithm gives the sequence of vectors
$(3,4,5)\mapsto (3,1,5)\mapsto (3,1,2)$; by \Cref{t:delta0}, $y_0=abcaca$ is
singular, whence so are $y_1=\rho_c(y_0)c^{-1}=acbccacca$,
$y_2=c^{-1}\xi(cy_1c)c^{-1}=acbbcbcacbca$, and $y=c^{-1}\xi(cy_2c)c^{-1}$.
\end{example}

\noindent We observe that \Cref{t:ternary} is essentially a restatement of the \Cref{gen} given in the introduction. The next example shows that \Cref{t:ternary} does not extend to larger alphabets.

\begin{example}\label{counter} Consider the word $y=abbcdd$ on the ordered alphabet $\A=\{a,b,c,d\}$ with $a<b<c<d$. Then the abelian class of $y$ contains two singular words (up to reversal), namely $x=bcdbda$ and $x'=bdbcda$. Thus relative to any order preserving assignment $\phi: \{a,b,c,d\} \rightarrow \{2,3,4,\ldots \}$, the maximum of $\Ks(\cdot)$ will be attained at either $x$ or $x'$. Now, one can check that for the assignment $(a,b,c,d)\mapsto (3,4,5,6)$ one finds $\Ks(x)=6827>6825=\Ks(x')$ and hence amongst all permutations of $344566$ the maximum of $\Ks(\cdot)$ is attained uniquely at $x$ (or $\overline x)$. In contrast, if $(a,b,c,d)\mapsto (3,4,15,16)$ then $\Ks(x)=171127<171135 =\Ks(x')$, whence the maximum of $\Ks(\cdot)$ is attained uniquely at $x'$. This shows that in this case the maximizing arrangement depends on the actual values assigned to each of $a,b,c$ and $d$.  Finally, relative to the assignment $(a,b,c,d)\mapsto (3,4,7,8)$ one finds that $\Ks(x)=\Ks(x')=18247$ which shows that the maximizing arrangement need not be unique. 
\end{example}

\section{Infinite singular words}

Let $\A$ be an ordered set. For $x \in \A^\nats \cup \A^\ints$, let $L(x)$ denote the set of all factors $u\in \A^+$ occurring in $x$. We do not assume that each $a\in \A$ occurs in $x.$ We say $L(x)$ is \emph{symmetric} if  $L(x)$ is closed under reversal, i.e., if $u\in L(x)$ then $\overline u \in L(x)$.

\noindent We begin by investigating infinite binary singular words.

\begin{prop}\label{fullbal}Let $x\in \A^\ints$ be a bi-infinite word over an ordered  binary alphabet $\A=\{a,b\}$. Then $x$ is singular if and only if $x$ is balanced.
\end{prop}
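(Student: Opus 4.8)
The plan is to prove both directions. For one direction, \Cref{bal} already does most of the work: any singular word (finite, one-sided, or bi-infinite) over a binary alphabet is balanced. So the substantive content lies in the converse, showing that a balanced bi-infinite binary word is singular. First I would record that since the statement is about bi-infinite words, we cannot simply quote the finite case; instead I would argue contrapositively, assuming $x$ is \emph{reversible} and deducing that $x$ is not balanced.

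So suppose $x\in\A^\ints$ admits a reversible factorization $x=\overline u v w$ with $v\neq\overline v$, $u\neq w$, and (say, after possibly replacing $x$ by $\overline x$, which preserves both balance and reversibility) $v<\overline v$ together with $u<w$. Here $u,w\in\A^\nats$ are one-sided infinite words reading outward from the central block $v$. Writing $v=a'v'b'$ with $a',b'\in\A$ distinct, the condition $v<\overline v$ forces the boundary letters of $v$ to be $a'=a$, $b'=b$ (the smaller letter leads). The key step is to extract from the reversibility inequalities $v<\overline v$ and $u<w$ two factors of $x$ of equal length that witness an imbalance. Concretely, since $u<w$ and $u\neq w$, either $w$ is a proper prefix of $u$ or they first differ at some least index $i$ with $u_i>w_i$, i.e.\ $u_i=b$, $w_i=a$. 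I would focus on this first-difference index $i$: comparing the length-$(i+1)$ windows of $x$ ending at the left boundary of $v$ (reading the reversal of the prefix of $u$ of length $i$, then stepping into $v$) against the symmetric length-$(i+1)$ window starting at the right boundary of $v$, one obtains two factors $u_i\cdots u_1\, a$ and $b\, w_1\cdots w_i$ (or palindromic completions thereof) that differ in their count of $b$'s by exactly $2$. This produces the configuration $azb$ and (reversed) giving factors $aza$ and $bzb$ for a common central palindrome $z$ — precisely the obstruction to balance supplied by Proposition~2.1.3 in \cite{Lot}.

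The cleanest way to package this is to mirror the logic of \Cref{bal} run in reverse: in that lemma, an imbalance produced non-overlapping factors $aza$ and $bzb$ and hence a reversible factorization; here I would start from the reversible factorization and reconstruct a common palindrome $z$ such that $aza$ and $bzb$ both occur in $x$, so that $x$ is not balanced by Proposition~2.1.3 in \cite{Lot}. The palindrome $z$ is built from the longest common agreement of $u$ and $w$ up to the first point of difference, symmetrized across the $v$-block; the boundary letters of $v$ supply the innermost symbols and the first-difference letters supply the outer $a$ versus $b$, yielding equal-length factors around a palindromic core. I would verify that these two factors genuinely occur in $x$ (using that $u,w$ are infinite, so enough letters are available on both sides, and that in the bi-infinite setting the factor $\overline u$ really does read the reversal of $u$ immediately to the left of $v$).

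The main obstacle I anticipate is the careful bookkeeping at the boundary between $v$ and the surrounding infinite words $u,w$, together with handling the case distinction coming from the lexicographic convention (recall $<$ treats proper prefixes in the opposite of dictionary order). In particular one must be attentive to whether the first difference between $u$ and $w$ occurs \emph{before} or \emph{after} they would otherwise agree with the palindromic extension determined by $v$, and to the degenerate sub-case where one of $u,w$ is a prefix of the other so that no genuine first-difference index exists. I expect that in that prefix sub-case one should instead exploit the strict inequality $v<\overline v$ directly to locate the imbalance inside the $v$-block extended by the agreeing part of $u$ and $w$. Once the extraction of the two equal-length unbalanced factors is done uniformly across these sub-cases, the conclusion $x$ not balanced is immediate, completing the contrapositive and hence the proposition.
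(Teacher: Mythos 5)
Your overall strategy is exactly the paper's: quote \Cref{bal} for the forward direction, and for the converse argue contrapositively, turning a reversible factorization $x=\overline u v w$ into two equal-length factors of $x$ with different letter counts. The paper does this in three lines by writing $v=ra'v'b'\overline r$ (with $r$ the common prefix of $v$ and $\overline v$) and $u=sa'u'$, $w=sb'w'$ (with $s$ the common prefix of $u$ and $w$), so that $a'\overline s r a'$ and $b'\overline r s b'$ are both factors; their $b$-counts differ by $2$, so $x$ is unbalanced. Your plan reaches the same two factors (with $r=\eps$ after the reduction to $v$ beginning and ending in distinct letters), but as written it contains two slips you should fix.

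First, the inequality at the first difference is reversed: if $u<w$ and neither is a prefix of the other, then $u_i<w_i$, i.e.\ $u_i=a$ and $w_i=b$ --- not $u_i=b$, $w_i=a$ as you wrote. With your assignment the two extracted windows would be $b\,\overline s\,a$ and $b\,s\,a$, which have \emph{equal} letter counts, and the argument collapses; with the correct assignment they are $a\overline s a$ and $bsb$ and the imbalance is immediate. Second, do not aim for a \emph{common central palindrome} $z$ with $aza,bzb\in L(x)$: the two factors you actually obtain are $a\overline s a$ and $b s b$, whose centers are reversals of one another and need not coincide. That is harmless --- to contradict balance you only need two factors of equal length whose $b$-counts differ by $2$, and $|s|_b=|\overline s|_b$ gives this; Proposition~2.1.3 of \cite{Lot} is only needed in the other direction (and is already encapsulated in \Cref{bal}). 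Finally, the ``degenerate sub-case where one of $u,w$ is a prefix of the other'' that you propose to treat separately cannot occur here, since $u,w\in\A^\nats$ are both infinite and distinct; likewise $v\neq\overline v$ with $|v|=|\overline v|$ always yields a genuine first difference. With these corrections your proof coincides with the paper's.
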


\begin{proof} By \Cref{bal}, if $x$ is singular then $x$ is balanced. Conversely, if $x$ is not singular, then $x$ admits a reversible factorization $x=\overline u v w$. Thus we may write $v=ra'v'b'\overline r$, $u=sa'u'$ and $w=sb'w'$ with $r,s,v' \in \A^*$, $u',w' \in \A^\nats$ and $\{a',b'\}=\{a,b\}$. It follows that $a'\overline s r a'$ and $b'\overline r s b'$ are each factors of $x$, whence $x$ is not balanced.  \end{proof}

\noindent As an immediate corollary:

\begin{cor}\label{MP} Let $x\in \A^\ints$ be a bi-infinite word over a binary alphabet $\A=\{a,b\}$. Then $x$ verifies the Markoff property (M) if and only if $x$ is singular for every linear order $<$ on $\A$. 
\end{cor}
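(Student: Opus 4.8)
The plan is to route the entire equivalence through the balance property, exploiting the fact that \Cref{fullbal} already identifies singularity with balance for bi-infinite binary words, together with the known correspondence between balance and the Markoff property (M). Since the corollary follows ``immediately'' from \Cref{fullbal}, the work is mostly bookkeeping of equivalences rather than new computation.

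First I would record the order-invariance observation that makes the phrase ``for every linear order'' tractable. A binary alphabet $\{a,b\}$ admits exactly two linear orders, and \Cref{fullbal} asserts that, for each fixed order, a bi-infinite word $x$ is singular if and only if it is balanced. As the balance condition makes no reference to any order, it follows that $x$ is singular for one order precisely when it is balanced, hence precisely when it is singular for the other order as well. Thus ``$x$ is singular for every linear order $<$ on $\{a,b\}$'' is equivalent to the single order-free statement ``$x$ is balanced.''

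Next I would invoke the equivalence between balance and property (M). This was first observed by Cusick and Flahive \cite{CF} and proved by Reutenauer (Theorem~3.1 in \cite{Reu}): a bi-infinite binary word is balanced if and only if it satisfies (M). Chaining this with the previous step gives both directions of the corollary at once, namely that $x$ satisfies (M) iff $x$ is balanced iff $x$ is singular for every order; concretely, if $x$ is singular for every order it is in particular singular for $a<b$, hence balanced by \Cref{fullbal}, hence satisfies (M), and conversely (M) forces balance, which by \Cref{fullbal} yields singularity for each of the two orders.

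To make the conceptual link transparent, I would also note that for a fixed order $a<b$ the singular condition specialized to length-two centers $v\in\{ab,ba\}$ reads verbatim as (M): when $v=ab$ one has $ab<\overline{v}=ba$, so singularity forces $w<u$, i.e.\ at the first index $i$ where $u$ and $w$ differ one has $u_i=b$ and $w_i=a$, which is exactly clause (M) for the center $ab$; the center $v=ba$ is symmetric. The step I expect to be the genuine crux is not this length-two matching but rather the implication that controlling the length-two centers already controls every center with distinct first and last letters. That implication is precisely what the balance equivalence of \Cref{fullbal} supplies, so the substance of the argument is inherited from \Cref{fullbal} and the cited balance--Markoff correspondence rather than proved afresh here.
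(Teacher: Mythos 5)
Your proposal is correct and follows essentially the same route as the paper: reduce singularity to balance via \Cref{fullbal} and then invoke Reutenauer's equivalence between balance and the Markoff property from \cite{Reu}. Your explicit observation that balance is order-free (and hence that singularity for one of the two orders already gives it for the other) is a slightly more careful treatment of the quantifier ``for every linear order'' than the paper's one-line proof, but the substance is identical.
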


\begin{proof} Fix a linear order $<$ on $\A$. Then by \Cref{fullbal},  $x$ is singular if and only if $x$ is balanced. The result now follows from 
Proposition~3.1 in \cite{Reu}. \end{proof}

In order to characterize one sided infinite binary singular words, we first establish the following general lemma: 

\begin{lemma}\label{Lyndon} Let $\A$ be any ordered alphabet.  Let $x\in \A^\nats $ and assume $L(x)$ is symmetric. If $x$ is singular, then $x\leq x'$ for each suffix $x'$ of $x$. In particular, if $x$ is not (purely) periodic, then $x$ is Lyndon.
\end{lemma}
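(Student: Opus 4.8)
The plan is to argue the contrapositive of the displayed inequality: assuming $x$ is singular with $L(x)$ symmetric, I show that no proper suffix $x'=x_{k+1}x_{k+2}\cdots$ (with $k\geq 1$) can satisfy $x'<x$. The starting observation, which needs neither symmetry nor recurrence, is that every prefix $p$ of $x$ satisfies $p\leq\overline p$: taking $\overline u=\eps$ in a factorization $x=\overline u\,v\,w$ with $v=p$ and $w$ the corresponding suffix, one has $u=\eps$, and hence $w<u$ (as $\eps$ is a proper prefix of the infinite word $w$); since $x$ is singular this forces $v\leq\overline v$, i.e. $p\leq\overline p$. Comparing $p=x_1\cdots x_m$ with $\overline p$ at the first letter then yields $x_1\leq x_m$ for every $m$, so that $x_1=\min L(x)$.

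Now suppose for contradiction $x'<x$, and let $j$ be the first index at which $x$ and $x'$ differ, so $x_i=x_{k+i}$ for $i<j$ and $b:=x_{k+j}<x_j=:a$. Because $x_1=\min L(x)\leq x_{k+1}$, the mismatch cannot occur at the first letter; thus $j\geq 2$, $x_{k+1}=x_1$, and $a>\min L(x)$. Writing $P=x_1\cdots x_{j-1}$, the word $Pa$ is a prefix of $x$ while $Pb$ occurs at position $k+1$; by symmetry of $L(x)$ the reversals $a\overline P$ and $b\overline P$ also lie in $L(x)$. The aim is to assemble from these ingredients a reversible factorization $x=\overline u\,v\,w$, contradicting singularity.

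The key simplification for the middle block is an endpoint trick: by the reduction recorded in the preliminaries it suffices to produce a reversible factorization with $v=a'v'b'$ whose first and last letters $a',b'$ differ, and for such $v$ one has $v<\overline v$ exactly when $a'<b'$. Thus controlling $v$ versus $\overline v$ reduces to comparing the two endpoints of $v$, and a reversible factorization is obtained once the outer comparison $u$ versus $w$ is made to agree in direction with $a'$ versus $b'$. Using the mismatch I arrange $v$ to begin with the large letter $a$ and end with the small letter $b$ (so $\overline v<v$), and I must then produce $\overline u$ and $w$ with $w<u$. This is where symmetry enters essentially: since $x_1=\min L(x)$, the bare prefix comparison $p\leq\overline p$ is always trivially met, so the hypothesis $x'<x$ can only be exploited by realizing a reversed prefix of $x$ as a genuine later factor — legitimate because a word with symmetric language is recurrent. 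Anchoring $\overline u$ at an occurrence of the reversal of a sufficiently long prefix of $x$ makes $u$ begin like $x$ (hence with $Pa$), while choosing $w$ to begin like $x'$ (hence with $Pb$) forces the two to first disagree at the letters $a>b$, giving $w<u$ as required.

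I expect the main obstacle to be the simultaneous control of all three comparisons: positioning the anchor for $u$ and the start of $w$ so that the bridging factor $v$ between them genuinely begins with a large and ends with a small letter, while the outer pair $u,w$ still resolves as $w<u$. The cleanest way to tame this is to choose the violating suffix extremally — for instance the lexicographically least suffix $y$, which is itself self-minimal — and to use the recurrence furnished by symmetry to place the occurrences; an alternative is an inductive reduction peeling off the leading minimal letter, mirroring the finite-word machinery of \Cref{t:a1st} and \Cref{t:dAl}. Once $x\leq x'$ is established for every suffix, the final assertion is immediate: if $x$ is not purely periodic then $x_{k+1}x_{k+2}\cdots\neq x$ for every $k\geq 1$, so each inequality $x\leq x'$ is strict, which is precisely the statement that $x$ is Lyndon.
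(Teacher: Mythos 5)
Your opening observation --- that the factorization $x=\overline{\eps}\,v\,w$ with $v$ a prefix of $x$ automatically satisfies $w<u$ under the paper's prefix convention, so that singularity forces $p\leq\overline{p}$ for every prefix $p$ --- is exactly the right lever, and it is essentially the whole proof; but you then set it aside and try to assemble a three-block reversible factorization with a \emph{nonempty} $u$ anchored at an occurrence of a reversed prefix of $x$ and $w$ anchored at an occurrence of $Pb$. As you yourself flag, this leaves the middle block $v$ completely uncontrolled: it is whatever happens to lie between the two anchors, and nothing forces it to begin with a large letter and end with a small one. It could be a palindrome (so the factorization is not even eligible), or it could satisfy $v<\overline{v}$, in which case together with $w<u$ the factorization is \emph{singular} rather than reversible and yields no contradiction. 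The proposed remedies --- choosing the violating suffix extremally, or an inductive peeling of the minimal letter --- are not carried out, so the argument has a genuine gap at its decisive step.

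The gap closes in one line by returning to $u=\eps$, which is what the paper does. With $Pa$ a prefix of $x$, $Pb$ a prefix of the offending suffix $x'$, and $b<a$, symmetry of $L(x)$ gives $b\overline{P}\in L(x)$. No occurrence of $b\overline{P}$ can end at position $|P|+1$ (that would force $b\overline{P}=Pa$, hence $a=b$ by comparing first and last letters; when $P=\eps$ it is excluded because $x_1=a\neq b$), so some occurrence ends at a position $m\geq|P|+2$. Take $v=x_1\cdots x_m$ and $w$ the remaining suffix: then $v$ begins with $Pa$ while $\overline{v}$ begins with $Pb$, so $\overline{v}<v$; moreover $w<u=\eps$ and $u\neq w$, so the factorization is reversible, contradicting singularity. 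Only one comparison ever needs to be controlled, and recurrence of $x$ (which you correctly note follows from symmetry of $L(x)$) is not needed --- a single occurrence of $b\overline{P}$, guaranteed by symmetry alone, suffices. Your concluding deduction of the Lyndon property from $x\leq x'$ is fine.
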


\begin{proof} Assume to the contrary that $x' < x$ for some proper suffix $x'$ of $x$. Then there exist $z\in \A^*$, $a,b\in \A$ with $a<b$ such that $zb$ is a prefix of $x$ and $za$ a prefix of $x'$. Let $v$ be a prefix of $x$ beginning in $zb$ and ending in $a\overline z$. Then we have $x=vw$ for some $w\in \A^\nats$ and $\overline v < v$. It follows that the factorization $x=\overline u v w$ with $u=\varepsilon$ is reversible, whence $x$ is not singular, a contradiction. Finally, if $x$ is not purely periodic, then $x\neq x'$ for each proper suffix $x'$ of $x.$ It follows that $x<x'$ for each proper suffix $x'$ of $x$ and hence $x$ is infinite Lyndon.  \end{proof}

\LyndSt*

\begin{proof}Without loss of generality we may assume that $a<b$. First assume $x$ is singular. Then $x$ is balanced by \Cref{bal}.
It follows that $x$ is Sturmian (see Theorem~2.1.5 in \cite{Lot}). Since $L(x)$ is symmetric (see Proposition~2.1.19 in \cite{Lot}), it follows from \Cref{Lyndon} that $x$ is Lyndon.  

For the converse, assume $x$ is a Sturmian Lyndon word, and suppose to the contrary that $x$ admits a reversible factorization $x=\overline u vw$. Of all such factorizations, pick one with the length of $v$ minimal. It follows that the first and last letter of $v$ are distinct. 
Thus, assume $v$ begins in $c$ and ends in $d$ with $\{c,d\}=\{a,b\}$.   If $u$ is a prefix of $w$, then $w<u$ and hence $\overline v <v$ which implies that $c=b$ and $d=a$. If follows that $au$ is a factor of $x$ and hence so is $\overline ua$. But $x$ begins in $\overline ub$ which contradicts the fact that $x$ is Lyndon.   Thus $u$ is not a prefix of $w$.  In this case we have  $u=zeu'$ and $w=zfw'$ with $z,u' \in \A^*$,  $w'\in \A^\nats$ and $e$ and $f$ distinct letters. In particular $z$ is a right special factor of $x$.
But since $\overline zc$ and $\overline zd$ are also factors of $x$, it follows that $\overline z$ is also right special, whence $z=\overline z$. 
Thus $ezc$ and $dzf$ are each factors of $x$. If $v<\overline v$ (i.e., $c<d)$ then $u<w$ (i.e, $e<f)$ whence $a=c=e$ and $b=d=f$ from which it follows that $aza$ and $bzb$ are each factors of $x$, a contradiction. Similarly, if $v>\overline v$ then $u>w$ whence $b=c=e$ and $a=d=f$ which again leads to the contradiction that $aza$ and $bzb$ are both factors of $x$.  This concludes our proof of \Cref{Sturmian}. \end{proof}

\begin{cor}\label{UR}Let $x\in \A^\nats$ be an aperiodic word over an ordered  binary alphabet $\A=\{a,b\}$. If $x$ is singular then $x$ is uniformly recurrent.
\end{cor}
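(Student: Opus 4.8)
The plan is to reduce the statement to the well-known fact that every Sturmian word is uniformly recurrent. First I would check that the hypotheses of \Cref{Sturmian} are met verbatim: $x$ is an aperiodic binary singular word, so \Cref{Sturmian} identifies $x$ as a Sturmian Lyndon word, and in particular $x$ is Sturmian. (Alternatively, one can bypass the Lyndon characterization and argue directly from \Cref{bal}: a singular word is balanced, and an aperiodic balanced one-sided infinite word is Sturmian by Theorem~2.1.5 in \cite{Lot}.) Either route gives the reduction in one line.

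It then remains to invoke that Sturmian words are uniformly recurrent, which I would phrase through the language $L(x)$ so as to make the ``bounded gaps'' conclusion transparent. Since $x$ is Sturmian, $L(x)$ coincides with the factor set of the characteristic word of the same slope, and this Sturmian language is \emph{uniformly recurrent}: for every $u\in L(x)$ there exists an integer $N_u$ such that $u$ occurs as a factor of every word of length $N_u$ in $L(x)$ (a standard consequence of the three-distance theorem, or equivalently of the minimality of the Sturmian subshift; see \cite{Lot}). Because every factor of $x$ of length $N_u$ lies in $L(x)$ and hence contains an occurrence of $u$, the gaps between consecutive occurrences of $u$ in $x$ are bounded by $N_u$. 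As $u$ was arbitrary, $x$ is uniformly recurrent, which is exactly the assertion.

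Since the whole argument is essentially a dictionary lookup layered on top of \Cref{Sturmian}, I do not expect a genuine obstacle here. The only points requiring care are the implication ``aperiodic and balanced $\Rightarrow$ Sturmian'' and the uniform recurrence of the Sturmian language. If one wished to keep the corollary self-contained rather than quoting \cite{Lot}, the real work would be in establishing bounded gaps for every factor, which follows from the continued-fraction / three-distance structure of Sturmian words; but given the preceding results this extra effort seems unnecessary.
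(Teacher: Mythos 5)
Your argument is correct and matches the paper's own proof, which likewise deduces the result immediately from \Cref{Sturmian} combined with the uniform recurrence of Sturmian words (Proposition~2.1.25 in \cite{Lot}). The extra detail you supply about the Sturmian language and bounded gaps is fine but not needed beyond that citation.
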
 
\begin{proof} This follows immediately from \Cref{Sturmian} together with the fact that every Sturmian word is uniformly recurrent (see for instance Proposition~2.1.25 in \cite{Lot}). \end{proof}

\begin{cor} Let $x\in \A^\nats$ be a uniformly recurrent aperiodic word over an ordered  binary alphabet $\A=\{a,b\}$. Then the shift orbit closure of $x$ contains a singular word if and only if $x$ is Sturmian. Furthermore, this singular word is unique.
\end{cor}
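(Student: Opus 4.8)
The plan is to reduce the statement to facts already established in the excerpt, chiefly \Cref{Sturmian} (a one-sided aperiodic binary word is singular iff it is a Sturmian Lyndon word) and the standard fact that all Sturmian words sharing a common slope have the same set of factors, hence the same shift orbit closure. I first unpack what ``the shift orbit closure of $x$ contains a singular word'' means: writing $\Omega(x)$ for the closure of the shift orbit $\{S^n x : n\geq 0\}$, I want to decide when some $y\in\Omega(x)$ is singular.

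\textbf{Sufficiency.} Suppose $x$ is Sturmian. Since $x$ is aperiodic and uniformly recurrent, $\Omega(x)$ is exactly the Sturmian subshift of the slope of $x$; every element of $\Omega(x)$ is a Sturmian word with the same language $L(x)$, which is symmetric. Among these, I would exhibit one singular element. The natural candidate is the \emph{characteristic} (or intercept-zero) Sturmian word together with its prefix structure: more concretely, I claim that each Sturmian subshift contains exactly one Lyndon word $y$, namely the lexicographically least element of $\Omega(x)$ for the order $a<b$ (this least element exists because $\Omega(x)$ is compact and the lexicographic order is well-behaved on it). Being the lexicographic minimum of its own shift orbit closure, $y$ is smaller than all its proper suffixes, i.e. $y$ is Lyndon; and $y$ is Sturmian since it lies in $\Omega(x)$. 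By \Cref{Sturmian}, $y$ is singular, so $\Omega(x)$ contains a singular word.

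\textbf{Necessity.} Conversely, suppose $\Omega(x)$ contains a singular word $y$. Since $x$ is aperiodic and uniformly recurrent, every $y\in\Omega(x)$ is also aperiodic (a periodic word in the orbit closure would force $x$ periodic) and satisfies $L(y)=L(x)$. By \Cref{Sturmian}, $y$ singular implies $y$ is Sturmian, in particular $y$ is balanced; hence $L(y)=L(x)$ is the language of a balanced word, so $x$ is balanced and therefore Sturmian.

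\textbf{Uniqueness.} Finally I would argue that the singular word in $\Omega(x)$ is unique. Any singular $y\in\Omega(x)$ is, by \Cref{Sturmian}, a Sturmian Lyndon word; by \Cref{Lyndon} (whose hypothesis $L(y)$ symmetric holds since Sturmian languages are closed under reversal) a singular word satisfies $y\leq y'$ for every suffix $y'$, so $y$ is the lexicographically least element of its own orbit. But all elements of $\Omega(x)$ share the orbit closure $\Omega(x)$, so any two singular words in $\Omega(x)$ are both equal to the unique lexicographic minimum of $\Omega(x)$, forcing them to coincide. The main obstacle I anticipate is the careful handling of the lexicographic order with the paper's nonstandard prefix convention (where $u<v$ when $v$ is a proper prefix of $u$): I must confirm that ``lexicographic minimum of $\Omega(x)$'' is well-defined and coincides with the Lyndon condition under this convention, and that aperiodicity rules out the degenerate prefix cases so that the minimum is genuinely Lyndon rather than merely weakly minimal.
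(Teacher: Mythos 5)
Your proposal is correct and follows essentially the same route as the paper: both directions reduce to \Cref{Sturmian}, with the singular word identified as the unique Lyndon element (the lexicographic minimum) of the minimal subshift $\Omega(x)$. You merely spell out in more detail the standard facts the paper takes for granted (existence and uniqueness of the lexicographic minimum by compactness, and that aperiodicity plus minimality make it genuinely Lyndon), which is harmless.
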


\begin{proof} If $x$ is Sturmian, then the unique Lyndon word in the shift orbit closure of $x$ is singular by \Cref{Sturmian}. Conversely, assume $y$ is a singular word in the shift orbit closure of a uniformly recurrent aperiodic binary word $x.$ Then $y$ is also uniformly recurrent, aperiodic and binary, whence $y$ is Sturmian Lyndon by \Cref{Sturmian}. It follows that $x$ is also Sturmian.  \end{proof}

The binary case is quite special as already on a ternary alphabet infinite singular words display a much different behavior. As we saw an infinite singular aperiodic word over a binary alphabet is necessarily uniformly recurrent and its set of factors is closed under reversal. Furthermore, a uniformly recurrent word  $x$ over a binary alphabet contains a singular word in its shift orbit closure if and only if $x$ is Sturmian, and moreover, this singular word is unique. This is in general not the case on alphabets of cardinality greater than $2$. For example, it is easy to see that over the ternary alphabet $\A=\{a,b,c\}$ with $a<b<c$, any infinite concatenation of $ac$ and $abc$ is singular. 
This implies in particular that there exist non recurrent singular words or recurrent aperiodic singular words which are not uniformly recurrent. It also shows that in general the set of factors of an infinite  singular word need not be closed under reversal. 

We next investigate the structure of  infinite singular words whose set of factors is closed under reversal. In what follows, let $\A_k=\{1,2,\ldots ,k\}$ be ordered by $1<2<\cdots <k$. 

\begin{defin}\label{order}Let $L\subseteq \A_k^*$. We say that $L$ satisfies the \emph{symmetric order condition} if whenever 
\begin{equation}\label{*}\mbox{$asd, bsc \in L$ with $a\neq b$ and $c\neq d$ $(a,b,c,d\in \A_k),$ then $a<b$ $\Leftrightarrow$ $c< d$}\end{equation}
for each $s\in \A_k^*$. 
\end{defin}

 \begin{thm}\label{H4}Let $x\in  \A_k^\nats \cup \A_k^\ints$. Assume $L(x)$ is symmetric. 
\begin{enumerate}
\item If $x\in \A_k^\ints, $ then $x$ is singular if and only if  $L(x)$ satisfies the symmetric order condition.
\item  If $x\in \A_k^\nats$,  then $x$ is singular if and only if  $L(x)$ satisfies the symmetric order condition and $x\leq x'$ for each suffix $x'$ of $x$. 
\end{enumerate}
\end{thm}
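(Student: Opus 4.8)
The plan is to establish the equivalence between singularity and the symmetric order condition by unwinding both properties into statements about matching factorizations around a central palindrome. The key observation is that both the singular property and the symmetric order condition are ultimately conditions comparing two ``branches'' that diverge from a common symmetric context, so I would aim to translate each reversible factorization into a violation of \eqref{*} and conversely.

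First I would prove the forward direction of item~1, namely that if $L(x)$ satisfies the symmetric order condition then $x$ is singular. Suppose to the contrary that $x$ admits a reversible factorization $x=\overline u v w$ with $v\neq\overline v$ and $u\neq w$. As noted in the preliminaries, I may assume $v$ begins and ends in distinct letters, say $v=cv'd$ with $c\neq d$. Writing $u$ and $w$ to first disagreement, I would locate the longest common prefix $s$ (suitably reversed) so that $u=se\cdots$ and $w=sf\cdots$ with $e\neq f$, and then read off that both $esd'$-type and $fs c'$-type words (with the appropriate letters coming from the central portion via symmetry of $L(x)$, using that $\overline v$ and $v$ share the reversed interior) lie in $L(x)$. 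The reversibility inequalities $v<\overline v \Leftrightarrow u<w$ then contradict \eqref{*}. The main subtlety here is bookkeeping: I must carefully identify which two factors $asd,bsc$ of the form in \eqref{*} are forced into $L(x)$ by the reversible factorization, using that $L(x)$ is symmetric so that reversed factors are also present.

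For the converse direction of item~1, I would assume $x$ is singular and derive the symmetric order condition. Given $asd,bsc\in L(x)$ with $a\neq b$, $c\neq d$, I would use recurrence (for bi-infinite $x$, every factor recurs in both directions) and symmetry of $L(x)$ to build an actual factorization of $x$ whose central block is symmetric around $s$ (or around $\overline s$), with the two flanking letters producing exactly the comparison $a$ versus $b$ on one side and $c$ versus $d$ on the other. Concretely, since $\overline s$ is also central-compatible, I can position $x=\overline u v w$ so that $v$ carries the $s$-palindrome core and the first letters of $u,w$ are governed by $\{a,b\}$ while the endpoints of $v$ are governed by $\{c,d\}$; singularity then yields $v<\overline v \Leftrightarrow w<u$, which unwinds to $a<b\Leftrightarrow c<d$. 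I expect this construction of the global factorization from the two local factors to be the main obstacle, since it requires leveraging the bi-infinite (or recurrent) structure to glue $asd$ and $bsc$ into a single coherent reversible-or-singular factorization; this is precisely where the two-sided hypothesis $x\in\A_k^\ints$ is used.

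Finally, for item~2 I would reduce the one-sided case to the preceding analysis. By \Cref{Lyndon}, if $x\in\A_k^\nats$ is singular with $L(x)$ symmetric, then $x\leq x'$ for every suffix $x'$, which gives the extra condition; conversely the symmetric order condition handles all factorizations with $u\neq\eps$ exactly as in item~1, while the factorizations with $u=\eps$ (which have no analogue in the bi-infinite setting, since one branch is a prefix of $x$ itself) are controlled precisely by the added hypothesis $x\leq x'$ for each suffix $x'$. I would verify that a factorization $x=\eps\cdot v w$ is non-reversible exactly when $\overline v < v \Rightarrow$ the relevant suffix comparison goes the right way, which is what $x\leq x'$ encodes. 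The only genuinely new work in item~2 is thus isolating the $u=\eps$ factorizations and checking that the suffix-minimality condition is both necessary (via \Cref{Lyndon}) and sufficient to rule them out.
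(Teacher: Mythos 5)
Your treatment of item 1 follows the paper's proof in both directions. For ``singular $\Rightarrow$ symmetric order condition'' one takes disjoint occurrences of $d\overline{s}a$ and $bsc$ (the first obtained from $asd$ by symmetry of $L(x)$), joins them by some $r\in\A_k^*$, and reads off a factorization $x=\overline{u}vw$ with $v=arb$, $u$ beginning in $sd$ and $w$ beginning in $sc$; for the converse one writes $u=sbu'$, $w=saw'$ at the first disagreement and applies \eqref{*} to the factors $a\overline{s}d$ and $b\overline{s}c$. One caveat: you invoke ``recurrence'' of factors of a bi-infinite word, which is neither among the hypotheses nor true in general; what is actually needed (and what the paper uses) is only that the two relevant factors admit disjoint occurrences, so no appeal to recurrence should be made.

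The genuine gap is in the sufficiency half of item 2. You split the factorizations $x=\overline{u}vw$ into ``$u\neq\eps$'' (claimed to be handled exactly as in item 1) and ``$u=\eps$'' (handled by suffix-minimality). This dichotomy is wrong: in the one-sided case $u$ is finite while $w$ is infinite, so $u$ can be a \emph{nonempty} proper prefix of $w$. For such a factorization there is no position of first disagreement between $u$ and $w$, so the item-1 argument (which needs $u=se\cdots$, $w=sf\cdots$ with $e\neq f$) does not apply; moreover the symmetric order condition gives nothing here, because the occurrence of $\overline{u}c$ at the start of $x$ has no letter to its left, so no pair of factors of the form required by \eqref{*} is produced. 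The correct dichotomy is ``$u$ is a proper prefix of $w$'' versus ``it is not,'' and in the former case (which includes but is not limited to $u=\eps$) one argues as the paper does: $w<u$ by convention, $du\in L(x)$ gives $\overline{u}d\in L(x)$ by symmetry, hence some proper suffix $x'$ of $x$ begins with $\overline{u}d$ while $x$ itself begins with $\overline{u}c$, and $x\leq x'$ forces $c<d$, i.e.\ $v<\overline{v}$, so the factorization is singular. The idea you sketch for $u=\eps$ is exactly this one, so the repair is immediate, but as written your plan leaves the intermediate case uncovered.
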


\begin{proof} We begin by showing that if $x\in  \A_k^\nats \cup \A_k^\ints$ is singular, then $L(x)$ verifies the symmetric order condition. So assume that $asd,bsc\in L(x)$ with $a\neq b$ and $c\neq d$. Since $L(x)$ is symmetric both $d\overline s a, bsc\in L(x)$. Furthermore, these two words cannot overlap one another. Thus there exists $r\in \A_k^*$ such that $d\overline s ar bsc \in L(x)$ or $bsc r d\overline sa \in L(x)$. If $d\overline s ar bsc \in L(x)$, then we can write $x=\overline u vw$ with $v=arb$, and where $w$ begins in $sc$ and $u$ begins in $sd$. Then $a<b \Leftrightarrow v<\overline v \Leftrightarrow w<u \Leftrightarrow c<d$. Similarly, if  $bsc r d\overline sa \in L(x)$ then we can write $x=\overline u vw$ with $v=crd$, and where $w$ begins in $\overline s a$ and $u$ begins in $\overline s b$. Then
$c<d \Leftrightarrow  v< \overline v \Leftrightarrow w<u \Leftrightarrow a<b$. Also, if $x\in \A_k^\nats$, then by \Cref{Lyndon} we have $x\leq x'$ for each suffix $x'$ of $x.$

For the converse, first assume that $x\in \A_k^\ints$. We will show that if  $x=\overline u v w$ with $v\in \A_k^+$  and $v\neq \overline v$ and $u\neq w$ then $v<\overline v$ if and only if $w<u$. Without loss of generality we may assume that $v$ begins and ends in distinct letters $c$ and $d$. Thus fix a factorization $x=\overline u v w$ with $u\neq w$ and with  $v=cyd$ with $y\in \A_k^*$ and $c,d\in \A_k$ distinct. 
Let us write $u=sbu'$ and $w=saw'$ with $s\in \A_k^*$,  $u', w'\in \A_k^\nats$ and $a,b\in \A_k$ distinct. Since $L(x)$ is symmetric, both $a\overline s d$ and $b\overline s c$ belong to $L(x)$. It follows from the symmetric order condition that if $a<b \Leftrightarrow c<d$.  In other words, if $w<u \Leftrightarrow v<\overline v$  as required. 

Next assume that $x\in \A_k^\nats$ and $L(x)$ satisfies the symmetric order condition and $x\leq x'$ for every suffix $x'$ of $x$.  Fix a factorization $x=\overline u v w$ with $v=cyd$ with $y\in \A_k^*$ and $c,d\in \A_k$ distinct and $u\neq w$. Then, the same proof as in the case of $x\in \A_k^\ints$ shows that if $u$ is not a proper prefix of $w$, then $v<\overline v \Leftrightarrow w<u$. So it remains to consider the case when $u$ is a proper prefix of $w$ in which case $w<u$. As $L(x)$ is symmetric and $du\in L(x)$ it follows that $\overline ud$ is a factor of $x$. Let $x'$ be a (proper) suffix of $x$ beginning in $\overline ud$. Since $\overline uc$ is a prefix of $x$ and $x\leq x'$,  it follows that $c<d$ and hence $v<\overline v$ as required.  \end{proof}

We will now use \Cref{H4} to show that infinite singular words $x\in \A_k^\ints$, with $L(x)$ symmetric, arise as natural codings of symmetric $k$-interval exchange transformations. In fact, the symmetric order condition is precisely the combinatorial criterion which distinguishes the language of symmetric interval exchange transformations from other symmetric languages of the same factor complexity including Arnoux-Rauzy subshifts or more generally Episturmian subshifts.  

Interval exchange transformations were originally introduced by Oseledec \cite{ose}, following an idea of Arnold \cite{arn}, see also \cite{ks}. A $k$-\emph{interval exchange transformation} $\mathcal I$ is given by a probability vector $(\alpha _1,\alpha _2,\ldots ,\alpha _k)$, $0<\alpha_i<1$, together with 
a permutation $\pi$ of $\{1,2,\ldots ,k\}$. The unit interval $[0,1]$ is partitioned into $k$ sub-intervals of lengths $\alpha _1, \alpha _2,\ldots ,\alpha _k$ labeled $1,2,\ldots ,k$ which are  then rearranged according to the permutation  $\pi^{-1}$. (Note: In some definitions, the intervals are rearranged according
to the permutation $\pi.$)
We will only be interested here in symmetric interval exchanges, that is,  where the permutation $\pi$ is given by $\pi(i)=k+1-i$ :

\begin{defin} A \emph{symmetric $k$-interval exchange transformation} $\mathcal I$ is a $k$-interval exchange transformation  with probability vector $(\alpha _1,\ldots,\alpha _k)$, and permutation $\sigma (i)=k+1-i$, $1\leq i\leq k$ defined by
\[\mathcal {I}(x)=x+\sum_{\pi^{-1}j<\pi^{-1}i}\alpha_{j}-\sum_{j<i}\alpha_{j}\]
when $x$ belongs to the (half open) interval
\[\mathcal I_{i}=\left[ \sum_{j<i}\alpha_{j}
,\sum_{j\leq i}\alpha_{j}\right).\]
We denote by $\beta_{i}$ $(1\leq i\leq k-1)$, the $i$-th point of discontinuity of $\mathcal {I}^{-1}$, namely 
$\beta_{i}=\sum_{j=k+1-i}^k\alpha_{j}$  and by 
$\gamma_{i}$ is the $i$-th discontinuity of $\mathcal {I}$, namely 
$\gamma_{i}=\sum_{j=1}^i\alpha_{j}$. Then
  $\mathcal I_{1}=[0, \gamma_{1})$, $\mathcal I_{i}=[\gamma_{i-1}, \gamma_{i})$, $2\leq i\leq k-1$
  and $\mathcal I_{k}=[\gamma_{k-1}, 1)$.
 \end{defin}
 
Two points $x,y \in [0,1]$ are said to belong to the same $\mathcal I$\emph{-orbit} if $\mathcal I^n(x)=y$ for some $n\in \ints$. This defines an equivalence relation on $[0,1]$ and the equivalence classes are called {\it orbits}. 
To each point $\gamma \in [0,1]$, one associates a bi-infinite word $(x_n)_{n\in \ints}\in \A_k^\ints$, called the \emph{natural coding} of $\gamma$ under $\mathcal I$,  where $x_n=i$ whenever $\mathcal{I}^n(\gamma) \in \mathcal I_i$. We define the language of $\mathcal I$, denoted $L(\mathcal I)$, to be the language generated by all natural codings, i.e., $w\in L(\mathcal I)$ if and only if $w$ is a factor of the  natural coding of some point $\gamma$ under $\mathcal I$.  

Via a decomposition result due to D.~Gaboriau, G.~Levitt and F.~Paulin which applies generally to all systems of partial isometries (see Theorem~3.1 in \cite{GLP}), every symmetric $k$-interval exchange transformation $\mathcal I$  decomposes canonically into a finite number of invariant sub-systems $\mathcal J_i.$ Moreover on each $\mathcal J_i,$ either every orbit is finite, meaning each corresponding natural coding is periodic, or each orbit is dense, in which case $\mathcal J_i$ is said to be {\it minimal}. In particular,  the closure of an orbit is not a Cantor set and  every natural coding under $\mathcal I$ of a point $\gamma\in [0,1]$  is uniformly recurrent. 

We will make use of the following recent result due to S.~Ferenczi, P.~Hubert and the third author \cite{FHZ} characterizing symmetric $k$-interval exchange transformations languages (see the proof of Theorem 13 and Lemma 11 in \cite{FHZ} or Proposition 4 in \cite{FZ}): 

\begin{thm}\label{thm1}[Theorem 13 in \cite{FHZ})
A language  $L\subseteq \A_k^*$  is the language of a symmetric $k$-interval exchange transformation $\mathcal I$ with interval lengths $(\alpha_1,\alpha_2,\ldots ,\alpha_k)$ if and only if the following two conditions are satisfied: 
\begin{itemize}
\item $L$ satisfies the symmetric order condition
\item {measure condition}: there exists an invariant probability measure $\mu$ on the symbolic dynamical system $(X_L,S)$ generated by $L$ such that
$\mu ([w])>0$
for each $w\in L$ and $\mu([i])=\alpha_i$ for each $i\in \A_k\cap L.$ \end{itemize}\end{thm}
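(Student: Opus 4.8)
The plan is to prove both implications directly, in the spirit of the reconstruction scheme of Ferenczi--Hubert--Zamboni. In one direction one starts from a symmetric interval exchange and reads off the two conditions; in the other, one starts from $L$ and builds the transformation out of the measure $\mu$. Throughout, $(X_L,S)$ denotes the two-sided subshift generated by $L$, and for $x\in X_L$ I write $x^+=x_0x_1x_2\cdots$ for its future, compared via the lexicographic order $<$ of the preliminaries.

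For the forward direction, assume $L=L(\mathcal I)$ for a symmetric $k$-interval exchange $\mathcal I$ with lengths $(\alpha_1,\ldots,\alpha_k)$. For the measure condition, push Lebesgue measure on $[0,1]$ forward along the coding map to get a shift-invariant probability measure $\mu$ on $(X_L,S)$. The cylinder $[w]$ corresponds to the set of points whose coding reads $w$ at the prescribed positions; by the usual tower structure of interval exchanges this set is a genuine subinterval of $[0,1]$, nonempty exactly when $w\in L$, so that $\mu([w])>0$ for every $w\in L$ and $\mu([i])$ equals the length $\alpha_i$ of $\mathcal I_i$. (Nonemptiness forces positive length because these cylinders are intervals; the Gaboriau--Levitt--Paulin decomposition \cite{GLP} cited above guarantees uniform recurrence and rules out exceptional atoms.) For the symmetric order condition, fix $s\in\A_k^*$ and let $J\subseteq[0,1]$ be the interval of points whose coding reads $s$ at positions $0,\ldots,|s|-1$. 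On $J$ the map $\mathcal I^{|s|}$ is an orientation-preserving translation, so the letter following $s$ is nondecreasing along $J$; on the other hand the images $\mathcal I(\mathcal I_1),\ldots,\mathcal I(\mathcal I_k)$ occur in reversed order on the line because the permutation is $\sigma(i)=k+1-i$, whence the letter preceding $s$ is nonincreasing along $J$. Given $asd,bsc\in L$ with $a\neq b$ and $c\neq d$, the two corresponding points of $J$ then satisfy $a<b\iff c<d$, which is precisely \Cref{order}.

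For the backward direction, assume $L$ satisfies the symmetric order condition and admits $\mu$ as in the statement. Define $\psi\colon X_L\to[0,1]$ by $\psi(x)=\mu(\{y\in X_L:y^+<x^+\})$. Shift-invariance of $\mu$ together with $\mu([w])>0$ for all $w\in L$ shows that $\psi$ sends each future-cylinder $[w]$ onto a subinterval of length $\mu([w])$; in particular $\psi([1]),\ldots,\psi([k])$ tile $[0,1]$ in increasing order, so $\psi([i])$ is the interval $\mathcal I_i$ of length $\alpha_i$. Since $\mathcal I_i$ is pinned down by the first letter $x_0=i$, passing to $Sx$ merely strips that letter and compares the remaining futures; hence $\mathcal I(\psi(x)):=\psi(Sx)$ is a well-defined orientation-preserving translation on each $\mathcal I_i$, so $\mathcal I$ is a $k$-interval exchange with lengths $(\alpha_i)$ and some permutation $\pi$. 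To identify $\pi$, take two points of a cylinder $[s]$ whose futures first diverge just after $s$, with preceding letters $a,b$ and post-$s$ letters $c,d$: the factors $asc$ and $bsd$ lie in $L$, and matching them to the crossed pairing of \Cref{order} yields $a<b\iff d<c$. Combined with the fact that the point of smaller position has the lexicographically smaller future (so $c<d$), this forces the preceding letter to strictly decrease as the position increases within every $[s]$ -- the signature of the reversal permutation -- whence $\pi=\sigma$ and $\mathcal I$ is symmetric. Finally $w\in L(\mathcal I)$ iff $\psi([w])$ is a nonempty interval iff $\mu([w])>0$ iff $w\in L$, so $L(\mathcal I)=L$.

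The main obstacle is to make the reconstruction rigorous, i.e.\ to upgrade $\psi$ from a monotone map of representative points to a genuine conjugacy between $(X_L,S)$ and $([0,1],\mathcal I)$. This requires controlling the at-most-countable set of sequences lying over orbits of the discontinuities $\gamma_i,\beta_i$, where the coding fails to be injective, and the possible atoms of $\mu$ arising from periodic components, so that $\psi$ is injective $\mu$-almost everywhere and the preceding-letter partition of each $[s]$ consists of finitely many subintervals arranged exactly in $\sigma$-order. Ensuring in addition that a single invariant measure $\mu$ can charge every cylinder of a possibly non-minimal $\mathcal I$ (via a convex combination over the Gaboriau--Levitt--Paulin components) is the remaining delicate point; this is where the arguments of \cite{FHZ} (Theorem~13 and Lemma~11) and \cite{FZ} (Proposition~4) do the technical work, while the order-condition computation above is the combinatorial heart that pins the permutation down to $\sigma$.
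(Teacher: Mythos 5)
First, for calibration: the paper does not prove this statement at all — it is imported verbatim as Theorem~13 of \cite{FHZ} (with Lemma~11 there, or Proposition~4 of \cite{FZ}), so your proposal can only be compared against that external argument. Your overall architecture does match the known strategy: coding cylinders of an interval exchange are half-open subintervals, the pushforward of Lebesgue measure gives the measure condition, and the monotone behavior of the following letter (nondecreasing, since $\mathcal I^{|s|}$ is a translation on the cylinder interval) versus the preceding letter (nonincreasing, since $\sigma$ reverses the order of the images $\mathcal I(\mathcal I_i)$) yields exactly \Cref{order}. Your forward direction is essentially complete; the parenthetical appeal to Gaboriau--Levitt--Paulin is unnecessary, since a nonempty cylinder is a nondegenerate half-open interval and hence automatically has positive Lebesgue measure.

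The backward direction, however, has a genuine gap at its central step. You assert that $\mathcal I(\psi(x)):=\psi(Sx)$ is a translation on each $\psi([i])$ because ``passing to $Sx$ merely strips that letter and compares the remaining futures.'' This is false for a general invariant measure charging all cylinders. For $x,x'\in[i]$ with $x^+<x'^+$, writing $W$ for the window of futures between $(Sx)^+$ and $(Sx')^+$, one has $\psi(x')-\psi(x)=\mu(\{y:\,y_0=i,\ (Sy)^+\in W\})$, whereas by $S$-invariance $\psi(Sx')-\psi(Sx)=\mu(\{y:\,(Sy)^+\in W\})$; the discrepancy $\mu(\{y:\,y_0\neq i,\ (Sy)^+\in W\})$ need not vanish. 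Indeed, the full shift with a Bernoulli measure satisfies your measure condition, and there the induced map on $[i]$ is affine with slope $2$, not a translation. Killing that discrepancy is precisely where the symmetric order condition must enter: applied to the words $is c'$ and $bs d'$ obtained by comparing an interior future of $W$ (preceded by $b$) with the two endpoint futures (each preceded by $i$), it forces $b<i$ and $b>i$ simultaneously, so interior futures can only be preceded by $i$. In your write-up the order condition is invoked only afterwards, to identify $\pi=\sigma$; without it the object you build is not an interval exchange at all, so this is a missing idea rather than a technicality. Secondly, when $\mu$ has atoms — and periodic components are allowed by the statement — the image $\psi(X_L)$ is countable and $\psi([w])$ is not an interval of length $\mu([w])$; atoms must be blown up into intervals, a step you acknowledge but resolve only by citing Theorem~13 and Lemma~11 of \cite{FHZ}, i.e., the very result being proved. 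As it stands, then, the proposal is a faithful sketch of the strategy behind the cited theorem, but not a self-contained proof.
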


 The language $L$ in \Cref{thm1} is assumed to be factorial and extendable. That is, writing $L=\bigcup_{i\geq 0} L_i$ with $L_0=\{\varepsilon\}$ and $L_n\subseteq\A_k^n$ for all $n$, we have that for each each $v\in L_n$ there exists $a,b\in\A_k$ with $av,vb\in L_{n+1}$ and each $v\in L_{n+1}$ may be written as $v=au=u'b$ with $a,b\in\A_k$ and  $u,u'\in L_n$.  A language $L$ is \emph{minimal} if for each $v\in L$ there exists $n$ such that $v$
is a factor of each word $w\in L_n$. The \emph{symbolic dynamical system} $(X_L,S)$ generated by a language $L$ is the two-sided shift  $S:X_L\rightarrow X_L$ where $X_L$ consists of all bi-infinite words $x\in \A_k^\ints$ such that $L(x)\subseteq L$ and where the shift operator $S$ is defined by $S((x_i)_{i\in \ints})_n=x_{n+1}$. Finally, by $[w]$ we mean the \emph{cylinder set} defined by $w \in L$, i.e., $[w]=\{x\in X_L: x_0x_1\cdots x_{|w|-1}=w\}$.

\begin{prop}\label{soc} Let $x\in \A_k^\ints$ be uniformly recurrent and assume that each $i\in \A_k$ occurs in $x.$ If $L(x)$ satisfies the symmetric order condition, then $L(x)$ is symmetric and hence $x$ is singular. 
\end{prop}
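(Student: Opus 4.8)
The statement packages two conclusions: that $L(x)$ is symmetric, and that $x$ is then singular. The second is immediate from the first, since once $L(x)$ is known to be symmetric, the hypothesis that $L(x)$ satisfies the symmetric order condition lets me invoke \Cref{H4}(1) to conclude that $x$ is singular. So the entire substance of the proposition lies in establishing the symmetry of $L(x)$, and that is where I would concentrate. My plan is to realize $L(x)$ as the language of a symmetric interval exchange through \Cref{thm1}, and then to exploit the reversal symmetry intrinsic to such languages.

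To apply \Cref{thm1} I must produce an invariant measure witnessing the measure condition. Since $x$ is uniformly recurrent, the subshift $(X_{L(x)},S)$ is minimal; being a subshift it is compact, so by the Krylov--Bogolyubov theorem it carries at least one $S$-invariant Borel probability measure $\mu$. Minimality forces $\mu([w])>0$ for every $w\in L(x)$: the translates $S^{-n}[w]$ cover $X_{L(x)}$, hence finitely many of them do by compactness, so $\mu([w])$ cannot vanish. Setting $\alpha_i=\mu([i])$ for $i\in\A_k$, the assumption that each letter occurs in $x$ guarantees $\alpha_i>0$, while $\sum_i\alpha_i=\mu(X_{L(x)})=1$ because the cylinders $[i]$ partition the space. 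Thus the given symmetric order condition together with this measure condition lets me conclude from \Cref{thm1} that $L(x)=L(\mathcal I)$, where $\mathcal I$ is the symmetric $k$-interval exchange with lengths $(\alpha_1,\ldots,\alpha_k)$.

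It remains to see that such a language is closed under reversal. Rather than manipulate the exchange map directly, I would argue through the uniqueness built into \Cref{thm1}. First, a short computation shows that $\overline{L(x)}$ again satisfies the symmetric order condition: if $AsD,BsC\in\overline{L(x)}$ with $A\neq B$ and $C\neq D$, then $D\overline sA,C\overline sB\in L(x)$, and applying the condition for $L(x)$ to these two words (with middle word $\overline s$) gives $D<C\iff B<A$, which rearranges to $A<B\iff C<D$. Second, note that $\overline{L(x)}=L(\overline x)$ is the language of the uniformly recurrent word $\overline x$, hence factorial, extendable and minimal, and the reversal map $\Phi(y)=\overline y$ is a homeomorphism $X_{L(x)}\to X_{\overline{L(x)}}$ conjugating $S$ to $S^{-1}$; so the pushforward $\overline\mu=\Phi_*\mu$ is an $S$-invariant probability measure on $X_{\overline{L(x)}}$ with $\overline\mu([w])=\mu([\overline w])$. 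This is strictly positive on every cylinder and satisfies $\overline\mu([i])=\alpha_i$. Hence $\overline{L(x)}$ also meets the hypotheses of \Cref{thm1} with the very same length vector $(\alpha_1,\ldots,\alpha_k)$. Since the symmetric permutation $\sigma(i)=k+1-i$ is fixed, the symmetric $k$-interval exchange is determined by its lengths, so both $L(x)$ and $\overline{L(x)}$ equal $L(\mathcal I)$; therefore $\overline{L(x)}=L(x)$, i.e.\ $L(x)$ is symmetric. Applying \Cref{H4}(1) then yields that $x$ is singular, completing the proof. (Alternatively, one may simply invoke the known reversal-closure of the language of a symmetric interval exchange, bypassing the uniqueness argument.)

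The step I expect to be the main obstacle is the measure condition: one must be certain that uniform recurrence genuinely delivers an invariant measure strictly positive on all cylinders---this is precisely where minimality, compactness, and the assumption that every letter occurs are all used---and that the length vector extracted from $\mu$ agrees with the one obtained after reversal. Everything else, namely the reduction of symmetry to \Cref{thm1} and the final appeal to \Cref{H4}, is formal.
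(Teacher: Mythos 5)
Your proof is correct and follows essentially the same route as the paper: establish the measure condition via minimality of the uniformly recurrent subshift, invoke \Cref{thm1} to identify $L(x)$ with the language of a symmetric $k$-interval exchange, apply the same theorem to $\overline{L(x)}$ with the pushforward measure, and conclude $L(x)=\overline{L(x)}$ from the equality of the length vectors, after which \Cref{H4} gives singularity. The only differences are cosmetic (your covering argument for $\mu([w])>0$ versus the paper's argument via long words $v(n)$ of positive measure, and your explicit verification that $\overline{L(x)}$ inherits the symmetric order condition, which the paper merely asserts).
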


\begin{proof} Since $x$ is uniformly recurrent, it follows that $L(x)$ is minimal. Minimality of $L(x)$ in turn implies the measure condition given in \Cref{thm1}. In fact, as in \cite{Bos}, let $\mu$ be any invariant probability measure on the shift orbit closure of $x$, which is the same as the symbolic system  generated by  $L(x)$.  Then for each positive integer  $n$ there is at least one word $v(n)\in L(x)$ of length $n$ with $\mu ([v(n)])>0$.
Now for each $w\in L(x)$, as $x$ is uniformly recurrent, it follows that $w$ is a factor of $v(n)$ for all $n$ sufficiently  large. Hence  $\mu([w])\geq \mu([v(n)])>0$ for all $n$ sufficiently large.
It follows from \Cref{thm1} (applied to $L(x))$ that $L(x)=L(\mathcal I)$ for some minimal symmetric $k$-interval exchange transformation $\mathcal I$ with interval lengths $(\mu([1]), \mu([2]), \ldots ,\mu([k]))$. 

Now consider $\overline x\in  \A_k^\ints$. Then $L(\overline x)=\{w \in \A_k^*: \overline w \in L(x)\}$. 
Then $L(\overline x)$ also verifies the symmetric order condition. Furthermore, we can define an invariant probability measure $\mu'$ on the shift orbit closure of $\overline x$ by $\mu'([w])=\mu([\overline w])$ for each $w\in L(\overline x)$. It follows  that $L(\overline x)$ also satisfies the measure condition and hence by \Cref{thm1},  $L(\overline x)=L(\mathcal I')$ where $\mathcal I'$ is a minimal symmetric $k$-interval exchange transformation with interval lengths $(\mu'([1], \mu'([2]), \ldots ,\mu'([k])$. Since $\mu([i])=\mu'([i])$ for each $i\in \A_k$, it follows that $\mathcal I=\mathcal I'$ and hence $L(x)=L(\mathcal I) =L(\mathcal I')=L(\overline x)$. This proves that $L(x)$ is symmetric. Finally, that $x$ is singular now follows from \Cref{H4}. \end{proof} 

\symmiets*
\begin{proof}
Let $x\in \A_k^\ints$ and assume $L(x)=L(\mathcal I)$ for some  symmetric $k$-interval exchange transformation $\mathcal I$. We will show that $x$ is singular and $L(x)$ symmetric.  By \Cref{thm1}, $L(x)$ satisfies the symmetric order condition.  As $x$ is uniformly recurrent, it follows from \Cref{soc} that $L(x)$ is symmetric and $x$ singular.  

For the converse, let $x\in \A_k^\ints$ be uniformly recurrent. Assume $x$ is singular and $L(x)$ is symmetric. It follows from \Cref{H4} that $L(x)$ satisfies the symmetric order condition. Also, as $x$ is uniformly recurrent,  it follows (as in the proof of \Cref{soc}) that $L(x)$ satisfies the measure condition. Hence by \Cref{thm1}, we have that $L(x)=L(\mathcal I)$ for some minimal $k$-interval exchange transformation $\mathcal I$. \end{proof} 

We now give a characterization of natural codings of symmetric $k$-interval exchange transformations satisfying Keane's  infinite distinct orbit condition \cite{kea}:

\begin{defin} A $k$-interval exchange transformations $\mathcal I$  satisfies the \emph{infinite distinct orbit condition} (or \emph{i.d.o.c.} for short)  if the $k-1$ negative trajectories $\{{\mathcal I}^{-n}(\gamma_{i} )\}_{n\geq 0}$ $(1\leq  i\leq k-1)$ of the discontinuities of ${\mathcal I}$
are infinite disjoint sets.\end{defin}\medskip

If $\mathcal I$ satisfies  i.d.o.c., then  $\mathcal I$ is minimal but not conversely.  A complete characterization of languages of interval exchange transformations satisfying i.d.o.c.  was obtained by S. Ferenczi and the third author in \cite{FZ} (see \Cref{iets} below). It is based on  Kerckhoff's definition of $k$-interval exchange transformations which involves two permutations $(\pi_0,\pi_1)$ in which the unit interval $[0,1]$ is partitioned into $k$ sub-intervals of lengths $\alpha _1, \alpha _2,\ldots ,\alpha _k$ ordered according to the permutation  $\pi_0^{-1}$ and then rearranged according to the permutation  $\pi_1^{-1}$ (see \cite{ker}). We first need to recall some terminology:

\begin{defin}\label{dtg}
\noindent For a permutation $\pi$ of $\{1,2,\ldots, k\}$, we define the {\em $\pi$-order} by $a<_{\pi} b$ whenever $\pi (a)< \pi(b)$.
\noindent A {\em $\pi$-interval} is a nonempty set of consecutive integers in the $\pi$-order.  \end{defin}

Let $L$ be a language. For $w\in L$, we define {\em arrival} set of $w$, denoted $A(w)$, as the set of all letters $a$ such that $aw$ is in $L$, and the {\em departure} set of $w$, denoted  $D(w)$, as the set of all letters $a$ such that $wa\in L$.

The following theorem gives a characterization of languages generated by a $k$-interval exchange transformation satisfying  i.d.o.c.:

\begin{thm}\label{iets}[Theorem 2 in \cite{FZ}]
A language $L$ is the language of a $k$-interval exchange transformation $\mathcal I$, defined by permutations
$(\pi_0, \pi_1)$ such that $\pi_0^{-1}(\{1,... j\}) \neq \pi_1^{-1}(\{1,... j\})$ for every $1\leq j\leq k-1$, and satisfying  i.d.o.c., if and only if $L$ satisfies
\begin{enumerate}
\item[(H0)] $L_1=\{1,\ldots, k\}$,
\item[(H1)] $L$ is minimal,
\item[(H2)] if $w$ is a bispecial word, $A(w)$ is a $\pi_1$-interval, 
\item[(H3)] if $w$ is a bispecial word and $a\in A(w)$,  $D(aw)$ is a $\pi_0$-interval,
\item[(H4)] if $a,b\in A(w)$ with $a<_{\pi_1}b$, $c\in D(aw)$, $d\in D(bw)$, 
then $c\leq_{\pi_0}d$,
\item[(H5)] if $a,b\in A(w)$  are consecutive in the $\pi_1$ order, $D(aw)\cap D(bw)$ is a singleton. \end{enumerate}\end{thm}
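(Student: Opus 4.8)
The plan is to prove both implications by translating the combinatorial data $A(w)$ and $D(w)$ attached to a factor $w\in L$ into the geometry of its cylinder. For $w$ of length $n$ write $[w]\subseteq[0,1)$ for the set of points whose natural coding begins with $w$; an induction on $n$ shows that each $[w]$ is a half-open interval on which $\mathcal I^{n}$ acts as a single translation, so that $\mathcal I^{n}([w])$ is again an interval. Under the Kerckhoff convention the subintervals $\mathcal I_i$ are arranged spatially in the $\pi_0$-order while their images $\mathcal I(\mathcal I_i)$ are arranged in the $\pi_1$-order. Since $D(w)=\{c:\mathcal I^{n}([w])\cap\mathcal I_c\neq\emptyset\}$ is the set of subintervals met by the single interval $\mathcal I^{n}([w])$, it is a block of consecutive letters in the $\pi_0$-order; dually $A(w)=\{a:\mathcal I(\mathcal I_a)\cap[w]\neq\emptyset\}$ is a block in the $\pi_1$-order. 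Thus every departure set is a $\pi_0$-interval and every arrival set a $\pi_1$-interval, which is exactly (H2) and, applied to $aw$, (H3). Condition (H0) is immediate since each letter occurs, and (H1) is Keane's theorem that i.d.o.c.\ implies minimality \cite{kea} (the admissibility $\pi_0^{-1}(\{1,\ldots,j\})\neq\pi_1^{-1}(\{1,\ldots,j\})$ then holds automatically, since a violation would produce a proper $\mathcal I$-invariant subinterval).

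For the remaining conditions I would analyse bispecial words. A factor $w$ of length $n$ is left-special exactly when $[w]$ contains in its interior one of the $k-1$ discontinuities $\beta$ of $\mathcal I^{-1}$, and right-special exactly when $\mathcal I^{n}([w])$ contains one of the $\gamma_j$ in its interior; a bispecial word is one where both occur. Because $\mathcal I^{n}$ restricts to an orientation-preserving translation on $[w]$, the subcylinders $[aw]$ $(a\in A(w))$ occur inside $[w]$ in the same spatial order as the $\pi_1$-order of their first letters, and $\mathcal I^{n}$ carries this order to the $\pi_0$-order of their departure blocks; hence $a<_{\pi_1}b$ forces $c\leq_{\pi_0}d$ for $c\in D(aw)$, $d\in D(bw)$, which is (H4). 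For (H5), let $a,b\in A(w)$ be consecutive in the $\pi_1$-order; then $[aw]$ and $[bw]$ are separated inside $[w]$ by a single discontinuity $\beta$ of $\mathcal I^{-1}$, and writing $\beta=\mathcal I(\gamma_{j'})$ (a one-sided image of a discontinuity of $\mathcal I$) the overlap $D(aw)\cap D(bw)$ consists of the unique letter $c$ with $\mathcal I^{n}(\beta)=\mathcal I^{n+1}(\gamma_{j'})$ interior to $\mathcal I_c$, and is empty precisely when $\mathcal I^{n+1}(\gamma_{j'})$ equals some $\gamma_j$, i.e.\ when $\gamma_{j'}$ lies on the negative orbit of $\gamma_j$. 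Since i.d.o.c.\ asserts that the $k-1$ negative orbits of the $\gamma_j$ are infinite and pairwise disjoint, no such coincidence can occur, and (H5) follows.

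For the converse I would reconstruct an interval exchange from a language satisfying (H0)--(H5). By (H1) the system $(X_L,S)$ is minimal, so it carries an invariant probability measure $\mu$ with $\mu([w])>0$ for every $w\in L$ (exactly as in the proof of \Cref{soc}); set $\alpha_i=\mu([i])$ and let $\mathcal I$ be the interval exchange with these lengths and the prescribed permutations $(\pi_0,\pi_1)$. It then remains to show $L(\mathcal I)=L$ and that $\mathcal I$ satisfies i.d.o.c. I would do this by an induction on the bispecial words of $L$ mirroring Rauzy induction: conditions (H2) and (H3) guarantee that the arrival and departure sets at each bispecial word have precisely the interval shape produced by a single Rauzy step, (H4) determines which of the two Rauzy moves is performed while keeping the pair $(\pi_0,\pi_1)$ consistent, and the singleton condition (H5) guarantees that each step splits off a single new interval. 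Matching the combinatorial Rauzy path read off from the bispecial words of $L$ with the geometric one generated by $\mathcal I$ shows that the cylinders of $\mathcal I$ have lengths $\mu([w])$, whence $L(\mathcal I)=L$; and (H5) holding at every bispecial word is, by the computation of the previous paragraph read backwards, exactly the statement that no two negative discontinuity orbits collide, which is i.d.o.c.

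The main obstacle is the converse, and specifically the verification that the combinatorial Rauzy path extracted from the bispecial structure of $L$ is infinite and agrees step-by-step with the geometric induction of the reconstructed $\mathcal I$. This requires controlling the limit of the induction: one must rule out termination or degeneration and show that the nested cylinder intervals shrink to points, so that every point of $[0,1)$ receives a coding lying in $L$ and the two languages genuinely coincide. The positivity $\mu([w])>0$ afforded by minimality keeps every interval nondegenerate along the way, but the delicate point is to show that (H4) and (H5) force the correct, non-degenerate Rauzy move at each stage while simultaneously yielding the disjointness and infiniteness of the discontinuity orbits; this is where the full strength of the bispecial conditions, rather than their consequences at any fixed length, must be used.
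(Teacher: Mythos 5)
A point of comparison first: the paper does not prove this statement at all --- it is imported verbatim as Theorem~2 of \cite{FZ} --- so your proposal can only be measured against that source, whose proof of this characterization occupies most of that paper. Your forward direction (i.d.o.c.\ language $\Rightarrow$ (H0)--(H5)) is essentially correct and is the standard geometric argument: on a cylinder $[w]$ of length $n$ the map $\mathcal I^{n}$ is a single translation, $D(w)$ is the trace of the interval $\mathcal I^{n}([w])$ on the spatially $\pi_0$-ordered partition $\{\mathcal I_c\}$ while $A(w)$ is the trace of $[w]$ on the $\pi_1$-ordered partition $\{\mathcal I(\mathcal I_a)\}$, and order preservation under the translation gives (H2)--(H4); your identification of a failure of (H5) at a bispecial $w$ with a coincidence $\mathcal I^{n+1}(\gamma_{j'})=\gamma_j$ between negative discontinuity orbits is exactly how i.d.o.c.\ enters. (Minor care is needed with the half-open conventions at discontinuities, and note that the admissibility condition $\pi_0^{-1}(\{1,\ldots,j\})\neq\pi_1^{-1}(\{1,\ldots,j\})$ is a hypothesis in this direction, not something to re-derive.)

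The genuine gap is the converse, and you flag it yourself: everything after ``I would do this by an induction on the bispecial words'' is a program, not a proof, and the three steps it defers are precisely where the theorem lives. (i) Nothing in your sketch shows that (H0)--(H5) force the combinatorial data of $L$ to be an infinite, non-degenerate induction path: you do not rule out that $L$ is periodic or that the induction halts, which requires extracting from (H2)--(H5) infinitely many genuine branchings (equivalently, factor complexity $(k-1)n+1$, or the correct return-word structure at bispecials). (ii) Granting the construction of $\mathcal I$ with lengths $\alpha_i=\mu([i])$, the equality $L(\mathcal I)=L$ needs the nested cylinder intervals to shrink to points and the combinatorial path of $L$ to agree step-by-step with the geometric induction of $\mathcal I$; the positivity $\mu([w])>0$ from minimality (as in \Cref{soc}) keeps intervals nondegenerate but does nothing by itself to synchronize the two inductions. (iii) Your derivation of i.d.o.c.\ ``by the computation read backwards'' is circular as stated: to convert a hypothetical collision $\mathcal I^{n+1}(\gamma_{j'})=\gamma_j$ into a violation of (H5) you need the corresponding word $w$ of length $n$ to be bispecial \emph{in $L$} with the relevant $a,b\in A(w)$ consecutive in the $\pi_1$-order, and that witnessing presupposes the language equality of (ii) together with an argument that the colliding orbit segment is actually realized by factors of $L$. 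These are the points at which \cite{FZ} does the real work, so your proposal should be regarded as sound in outline for necessity and incomplete for sufficiency.
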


\noindent We will also be needing the following lemma: 

 \begin{lemma}\label{sym}[Lemma 6 in \cite{FZ}] If $L$ satisfies $(H0)$ to $(H5)$ for $\pi_0=Id$, $\pi_1=\sigma \,: i\mapsto k+1-i$, then $L$ is symmetric. \end{lemma}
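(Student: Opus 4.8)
The plan is to combine the structural characterization of \Cref{iets} with an explicit geometric symmetry of the symmetric interval exchange. Since $L$ satisfies (H0)--(H5) with $\pi_0=\mathrm{Id}$ and $\pi_1=\sigma$, and since $\pi_0^{-1}(\{1,\dots,j\})=\{1,\dots,j\}$ differs from $\pi_1^{-1}(\{1,\dots,j\})=\{k+1-j,\dots,k\}$ for every $1\le j\le k-1$, the hypothesis of \Cref{iets} is met and yields $L=L(\mathcal I)$ for a $k$-interval exchange $\mathcal I$ satisfying i.d.o.c.\ whose data $(\pi_0,\pi_1)=(\mathrm{Id},\sigma)$ is precisely that of the symmetric interval exchange with permutation $\sigma(i)=k+1-i$. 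It then suffices to prove that $L$ is closed under reversal. First I would record that reversing a natural coding amounts to coding under the inverse map: if $(x_n)_{n\in\ints}$ is the coding of $\gamma$ under $\mathcal I$, then $\overline{(x_n)}=(x_{-n})_n$ is exactly the natural coding of $\gamma$ under $\mathcal I^{-1}$ relative to the same partition $\{\mathcal I_i\}$. Hence the reversed language $\overline L$ equals $L(\mathcal I^{-1})$, and the goal becomes $L(\mathcal I^{-1})=L(\mathcal I)$.

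The heart of the argument is an identity special to the symmetric case. Let $R\colon[0,1]\to[0,1]$ be the reflection $R(x)=1-x$. Using $\sigma^{-1}=\sigma$, the translation performed by $\mathcal I$ on $\mathcal I_i$ is $t_i=\sum_{j\ge i+1}\alpha_j-\sum_{j<i}\alpha_j$, so the image interval is $\mathcal I(\mathcal I_i)=[\sum_{j\ge i+1}\alpha_j,\sum_{j\ge i}\alpha_j)$. A short computation shows this coincides, up to its two endpoints, with $R(\mathcal I_i)=(\sum_{j\ge i+1}\alpha_j,\sum_{j\ge i}\alpha_j]$; that is, the reflection sends each domain interval onto its own $\mathcal I$-image. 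From this one deduces the conjugacy $\mathcal I^{-1}=R\,\mathcal I\,R$: for $x\in\mathcal I(\mathcal I_i)=R(\mathcal I_i)$ both $\mathcal I^{-1}(x)$ and $R\mathcal I R(x)$ equal $x-t_i$.

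Finally I would transport codings through this conjugacy. Since $(\mathcal I^{-1})^n=R\,\mathcal I^n\,R$, for any $\gamma$ we have $(\mathcal I^{-1})^n\gamma\in\mathcal I_i$ iff $\mathcal I^n(R\gamma)\in R(\mathcal I_i)=\mathcal I(\mathcal I_i)$, i.e.\ iff $\mathcal I^{n-1}(R\gamma)\in\mathcal I_i$. Thus the $\mathcal I^{-1}$-coding of $\gamma$ is a one-step shift of the $\mathcal I$-coding of $R\gamma$, so the two share the same factors; letting $\gamma$ (equivalently $R\gamma$, as $R$ is a bijection of $[0,1]$) range over all points gives $L(\mathcal I^{-1})=L(\mathcal I)$, i.e.\ $L$ is symmetric. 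The main obstacle is the interval identity $R(\mathcal I_i)=\mathcal I(\mathcal I_i)$ — the single place where the choice $\pi_1=\sigma$ (order-reversal) is genuinely used — together with the bookkeeping to verify that the half-open endpoint mismatch between $R(\mathcal I_i)$ and $\mathcal I(\mathcal I_i)$ is harmless: because $\mathcal I$ satisfies i.d.o.c.\ the discontinuity orbits form a countable set, so the shift relation holds for every $\gamma$ off a countable exceptional set, and minimality then guarantees these codings already generate the full language $L$.
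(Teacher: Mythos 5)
The paper does not prove this lemma itself; it imports it verbatim from \cite{FZ} (Lemma~6 there), so there is no internal proof to match against. Your argument is, as a standalone derivation, essentially correct: the computation $t_i=(1-\gamma_i)-\gamma_{i-1}$ gives $\mathcal I(\mathcal I_i)=[1-\gamma_i,1-\gamma_{i-1})$ while $R(\mathcal I_i)=(1-\gamma_i,1-\gamma_{i-1}]$, the conjugacy $\mathcal I^{-1}=R\,\mathcal I\,R$ holds off a countable set, the reversal of a natural coding of $\gamma$ under $\mathcal I$ is the natural coding of $\gamma$ under $\mathcal I^{-1}$, and under i.d.o.c.\ (hence minimality) every nonempty cylinder is an interval of positive length, so all codings share one language and the exceptional set is indeed harmless. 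Your check that $\pi_0^{-1}(\{1,\dots,j\})\neq\sigma^{-1}(\{1,\dots,j\})$ for $1\leq j\leq k-1$ is also correct, so \Cref{iets} applies. This is a genuinely different route from the one the paper takes where it actually needs symmetry (in \Cref{soc}): there the authors apply \Cref{thm1} to both $x$ and $\overline{x}$, observe that the symmetric order condition and the measure condition are both preserved under reversal with $\mu'([w])=\mu([\overline w])$, and conclude $L(x)=L(\overline x)$ from the uniqueness of the interval exchange determined by the lengths $\mu([i])$ --- a soft argument that never writes down the flip $R$. Your geometric argument buys an explicit mechanism (time-reversal symmetry of symmetric IETs), while the paper's buys generality and avoids endpoint bookkeeping.

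One caveat you should address before relying on this as a proof of Lemma~6 of \cite{FZ} itself: you derive the lemma from \Cref{iets}, i.e.\ from Theorem~2 of \cite{FZ}. If, in \cite{FZ}, Lemma~6 is an ingredient in the proof of that very theorem (which is plausible, since symmetry of the language is the kind of structural fact one establishes on the way to constructing the exchange from the combinatorial data), your argument would be circular as a reconstruction of the cited proof, even though it is a valid deduction inside the present paper, which treats both statements as imported black boxes. It would be worth either verifying the dependency order in \cite{FZ} or reformulating your conclusion as ``Lemma~\ref{sym} follows from Theorem~\ref{iets}'' rather than as an independent proof.
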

 
 As observed in \cite{FZ}, condition (H5) precludes the existence of weak bispecial factors (see \cite{Cass}). It is implied by i.d.o.c.. 
 In general, a natural coding $x$ of a symmetric $k$-interval exchange transformation may contain weak bispecial factors as the following example illustrates:

\begin{example} Let $y=01001010010\cdots$ be the Fibonacci word fixed by the morphism $0\mapsto 01$, $1\mapsto 0$. Let $x\in \{1,2,3\}^\nats$ be the image of $0y$ under the morphism $0\mapsto 1213$, $1\mapsto 12213$. 

\[x=12131213122131213121312213121312213121312131221312131\cdots.\]
Then it is readily verified that $x$ is an aperiodic uniformly recurrent singular word and $L(x)$ is symmetric (in fact, $x$ begins in infinitely many palindromes). It follows that $L(x)$ satisfies the symmetric order condition and the measure condition, whence $L(x)=L(\mathcal I)$ for some symmetric $3$-interval exchange transformation $\mathcal I.$  However, $\mathcal I$ does not satisfy i.d.o.c. since $L(x)$ does not verify (H5). In fact, consider the bispecial factor $w=1$. Then $2,3\in A(1)$ are consecutive in the $\pi_1$-order. However, $D(21)=\{3\}$ while $D(31)=\{2\}$ whence $D(21)\cap D(31)=\emptyset$.  
 \end{example}

 
 \begin{thm}\label{idoc} Let  $x\in \A_k^\ints$ be  uniformly recurrent and assume each $i\in \A_k$ occurs in $x.$  Assume that for each $w\in L(x)$ there exists $a\in \A$ such that $D(aw)=D(w)$. Then the following are equivalent
\begin{enumerate}
\item $x$ is singular and $L(x)$ is symmetric.
\item $L(x)$ is the language of a symmetric $k$-interval exchange transformation satisfying  i.d.o.c..
\end{enumerate}
\end{thm}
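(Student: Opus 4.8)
My plan is to derive the equivalence from the two characterizations already available. Theorem~\ref{ietssymm} identifies the languages of symmetric $k$-interval exchange transformations with those $L(x)$ for which $x$ is singular and $L(x)$ is symmetric, while Theorem~\ref{iets} characterizes the i.d.o.c.\ languages by conditions (H0)--(H5). The implication (2)$\Rightarrow$(1) is then immediate and does not even use the standing hypothesis: if $L(x)=L(\mathcal I)$ for a symmetric $k$-interval exchange $\mathcal I$ (in particular one satisfying i.d.o.c.), Theorem~\ref{ietssymm} gives that $x$ is singular and $L(x)$ symmetric. All the content lies in (1)$\Rightarrow$(2). For this I would fix the permutations $\pi_0=\mathrm{Id}$ and $\pi_1=\sigma\colon i\mapsto k+1-i$ and verify (H0)--(H5); since $\sigma^{-1}(\{1,\dots,j\})=\{k+1-j,\dots,k\}\neq\{1,\dots,j\}$ for $1\le j\le k-1$, Theorem~\ref{iets} would then exhibit $L(x)$ as the language of a $k$-interval exchange with permutations $(\mathrm{Id},\sigma)$ satisfying i.d.o.c., that is, of a symmetric one.

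So assume $x$ is singular, $L(x)$ is symmetric, and the standing hypothesis holds. By Theorem~\ref{ietssymm}, $L(x)$ is already the language of a symmetric $k$-interval exchange, which takes care of the geometric conditions: (H0) holds since every letter of $\A_k$ occurs, and (H1) holds because $x$ is uniformly recurrent, so $L(x)$ is minimal. Conditions (H2) and (H3) say that $A(w)$ and each $D(aw)$ are intervals of consecutive integers; this follows from the interval-exchange structure, as the cylinder of any factor $v$ is an interval $I_v$ and its right-extension set $D(v)$ records exactly the partition intervals met by $\mathcal I^{|v|}(I_v)$, hence a block of consecutive letters (for $A(w)$ one uses $A(w)=D(\overline w)$, valid because $L(x)$ is symmetric). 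Condition (H4), for these permutations, asserts that if $a>b$ lie in $A(w)$ and $awc,bwd\in L(x)$ then $c\le d$; this is exactly an instance of the symmetric order condition, which holds by Theorem~\ref{H4}.

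The heart of the argument is (H5), and this is where the standing hypothesis is used. Fix a bispecial word $w$ and list $A(w)=\{a_1>a_2>\cdots>a_m\}$ with $m\ge 2$. By (H3) each $D(a_iw)$ is an interval, and by (H4) (note $a_i>a_{i+1}$ means $a_i<_{\pi_1}a_{i+1}$) one has $\max D(a_iw)\le\min D(a_{i+1}w)$; hence the intervals $D(a_1w),\dots,D(a_mw)$ are nondecreasing and consecutive ones can meet only in their common endpoint. Condition (H5) is precisely the assertion that each such meeting is a singleton rather than empty. Now choose, via the hypothesis, an index $i^*$ with $D(a_{i^*}w)=D(w)$. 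Then $\min D(a_{i^*}w)=\min D(w)=\min D(a_1w)$ and $\max D(a_{i^*}w)=\max D(w)=\max D(a_mw)$; since the minima and maxima are nondecreasing in $i$, it follows that $D(a_iw)$ reduces to the single letter $\min D(w)$ for every $i<i^*$ and to $\max D(w)$ for every $i>i^*$. Consequently each consecutive pair $D(a_jw),D(a_{j+1}w)$ overlaps in exactly one letter, which is (H5).

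With (H0)--(H5) verified for $(\mathrm{Id},\sigma)$, Theorem~\ref{iets} yields that $L(x)$ is the language of a symmetric $k$-interval exchange satisfying i.d.o.c., completing (1)$\Rightarrow$(2). I expect the main obstacle to be the bookkeeping of the extension structure at a bispecial word: organizing the sets $D(a_iw)$ into a monotone ``staircase'' of intervals via (H3)--(H4), and checking that the single coincidence $D(a_{i^*}w)=D(w)$ furnished by the hypothesis suffices to collapse every intermediate overlap to a singleton. Establishing the interval nature of arrival and departure sets ((H2) and (H3)) cleanly is the other delicate point, but it is forced by the interval-exchange structure already secured through Theorem~\ref{ietssymm}.
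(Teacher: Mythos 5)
Your proposal is correct and reaches the theorem, but it diverges from the paper's proof in two of the three nontrivial steps. The direction $(2)\Rightarrow(1)$ and the verification of (H0), (H1) and (H4) (the latter being exactly the symmetric order condition via \Cref{H4}) coincide with the paper. For (H2)--(H3), the paper does not go back to the geometry: it proves a self-contained combinatorial statement (\Cref{arrdep}) that $A(s)$ and $D(s)$ are intervals for \emph{every} $s\in L(x)$, directly from singularity and symmetry of $L(x)$, by exhibiting two factorizations $x=\overline{u_1}v_1w$ and $x=\overline{u_2}v_2w$ that squeeze $\overline s$ into being a prefix of $w$; you instead read the interval property off the cylinder structure of the interval exchange already furnished by \Cref{ietssymm}. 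That is legitimate, but it rests on standard geometric facts (cylinders $I_v$ are intervals, $\mathcal I^{|v|}|_{I_v}$ is a translation, and $D(v)$ records the partition intervals met by $\mathcal I^{|v|}(I_v)$) which the paper never states and which you assert rather than prove; the combinatorial lemma avoids this. For (H5) the two arguments are genuinely different: the paper shows $|D(aw)\cap D(bw)|\le 1$ by applying the symmetric order condition to the four factors $awc$, $awd$, $d\overline wb$, $c\overline wb$, and then gets nonemptiness by applying the standing hypothesis to $\overline w$ (producing a letter $c$ with $D(c\overline w)=D(\overline w)$, hence a common right extension $c$ of $aw$ and $bw$ via symmetry). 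You apply the hypothesis to $w$ itself and run a monotone ``staircase'' collapse using (H3)--(H4); this works, but note that it silently uses $D(w)=\bigcup_{a\in A(w)}D(aw)$ (extendability of the language), and it consumes (H3) and (H4) as inputs, whereas the paper's (H5) argument is independent of them. Each approach buys something: yours makes transparent \emph{why} the hypothesis $D(aw)=D(w)$ forces all but one left extension to have a singleton departure set, while the paper's is shorter, purely combinatorial, and does not need the geometric description of cylinders.
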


\begin{proof} That 2. implies 1. follows from \Cref{ietssymm}. To see that 1. implies 2. we begin with the following lemma:

\begin{lemma}\label{arrdep} Let $x\in \A_k^\ints$. Assume that $L(x)$ is symmetric and that each $i\in \A_k$ is recurrent in $x$. If $x$ is singular, then $A(s)$ and $D(s)$ are both intervals for each $s\in L(x)$.
\end{lemma} 

\begin{proof} Fix $s\in L(x)$. It suffices to show that $D(s)$ is an interval. In fact, since  $L(x)$ is symmetric it follows that $A(s)=D(\overline s)$ for each $s\in L(x)$. So assume that there exist $a<b<c$ in $\A_k$ with $a,c\in D(s)$. We will show that  $b\in D(s)$. Since $b$ is recurrent in $x$ we may write $x=\overline ubw$ with $u, w\in \A_k^\nats$ and with $sa$ and $sc$ each occurring in $\overline u.$ 
Thus we may write $x=\overline {u_1} v_1 w$ where $v_1$ begins in $a$ and ends in $b$ and $u_1$ begins in $\overline s$. Since $v_1<\overline {v_1}$ and $x$ is singular, it follows that $w\leq u_1$. Similarly, we may write  $x=\overline{u_2} v_2 w$ where $v_2$ begins in $c$ and ends in $b$ and $u_2$ begins in $\overline s$. Since $\overline {v_2} <v_2$ we have that $u_2\leq w$. Since $\overline s$ is a prefix of both $u_1$ and $u_2$ it follows that $\overline s$ is a prefix of $w$, whence $b\overline s \in L(x)$ from which it follows that $sb\in L(x)$ as required.%
\end{proof}

We now prove that 1. implies 2. So assume that $x\in \A_k^\ints$ satisfies the hypotheses of \Cref{idoc},   $x$ is singular and that $L(x)$ is symmetric. We  now show that $L(x)$ verifies each of the conditions in \Cref{iets}. Condition (H0) is immediate since each $i\in \A_k$ occurs in $x$. Condition (H1) follows from the fact that $x$ is uniformly recurrent. Conditions (H2) and (H3) follow immediately from \Cref{arrdep}. Condition  (H4) applied to $\pi_0=Id$ and $\pi_1=\sigma$ is merely a reformulation of the symmetric order condition. Thus (H4) follows from \Cref{H4}.
To show (H5), suppose $a,b\in A(w)$ are consecutive. Without loss of generality, we may assume that $a<b$.  Let $c,d \in D(aw)\cap D(bw)$. We will show that $c=d$. We have $awc, awd, d\overline w b, c\overline w b\in L(x)$. Considering the factors $awc$ and $d\overline w b$, since $a<b$ it follows that $wc\geq wd$ and hence $c\geq d$. Similarly, considering the factors $awd$ and $c\overline w b$, since $a<b$ we get $wd\geq wc$ and hence $d\geq c$. This proves that $c=d$ and hence that $D(aw)\cap D(bw)$ is at most a singleton. To see that $D(aw)\cap D(bw)\neq \emptyset$, pick $c\in \A_k$ such that $D(c\overline w)=D(\overline w)$. As $a,b\in D(\overline w)$ we get that $c\overline w a, c\overline wb \in L(x)$. It follows that $awc,bwc\in L(x)$ and hence $c\in D(aw)\cap D(bw)$. This concludes our proof of \Cref{idoc}.\end{proof}



\section*{Acknowledgments}
In memory of the late Gerhard Ramharter whose earlier work served as our main source of inspiration, and to whom we are extremely grateful for his invaluable input and many suggestions during the time of our research.

\end{document}